\documentclass[12pt]{amsart}
\usepackage{xspace,amssymb,amsfonts,euscript,eufrak,mathrsfs}

\usepackage{srcltx,tikz,etex}
\usepackage{amsthm,amsmath}
\usepackage{nicefrac,bbm}
\usepackage{graphicx}
 \usepackage{epstopdf,ifpdf}
\ifpdf
  \usepackage{pdfsync}
\fi
\usepackage{palatino}

\title[A geometric construction of colored HOMFLYPT homology]{A geometric construction\\ of colored HOMFLYPT homology}




\usepackage[all]{xy}


  \newcommand{\nc}{\newcommand}
  \newcommand{\renc}{\renewcommand} 

\usepackage[latin1]{inputenc}

\def\to{\rightarrow}

\nc{\Br}{\mathcal{B}}
\nc{\id}{id}
\renc{\P}{\mathbb{P}}
\nc{\cO}{\mathcal{O}}
\nc{\N}{\mathbb{N}}
\renc{\H}{\mathcal{H}}
\nc{\CC}{\mathcal{C}}

\nc{\F}{\mathcal{F}}
\nc{\G}{\mathcal{G}}
\nc{\Fq}{\mathbb{F}_q}
\nc{\Fqn}{\mathbb{F}_{q^n}}
\nc{\Q}{\mathbb{Q}}

\nc{\AC}{\mathcal{A}}
\nc{\BC}{\mathcal{B}}

\nc{\uk}{\underline{\fk}}
\nc{\triright}{\stackrel{[1]}{\to}}

\nc{\Ga}{\mathbb{G}_a} 
\nc{\Gm}{\mathbb{G}_m} 

\nc{\Loc}{\mathcal{L}}
\nc{\gG}{\Gamma}

\nc{\Perv}{\mathsf{Perv}}
\nc{\kos}[2]{\EuScript{K}_{#1,#2}}
\nc{\bet}{b}
\nc{\phc}{\Phi}
\nc{\vp}{\varphi}
\nc{\betT}{b_T}
\nc{\de}{\delta}
\nc{\QT}{Q}
\nc{\HT}{S}
\nc{\KM}{\EuScript{A}}
\nc{\ep}{\epsilon}
\nc{\Bi}{\mathbf{i}}
\nc{\BB}{B\times B}
\nc{\C}{\mathbb{C}}
\nc{\R}{\mathbb{R}}
\nc{\Cs}[1]{\underline{\C}_{#1}}
\nc{\IH}{I\!H}
\nc{\IC}{\mathbf{IC}}
\nc{\Gw}{G_w}
\nc{\up}{\vp^G_H}
\nc{\Kw}{K_w}
\nc{\B}{\mathcal{B}}
\nc{\SU}[1]{\mathrm{SU}(#1)}
\nc{\SL}[1]{\operatorname{SL}(#1)}
\nc{\GL}[1]{\mathrm{GL}(#1)}
\nc{\HU}[1]{\mathbb{H}\mathrm{U}(#1)}
\nc{\rank}{\mathrm{rk}}
\renc{\t}{\mathfrak t}
\nc{\td}{\t^*}
\nc{\pur}[1]{\cF_\be^{#1}}
\nc{\Z}{\mathbb{Z}}
\nc{\g}{\mathfrak g}
\nc{\fF}{\Phi}
\nc{\HG}{H_G}
\nc{\fk}{\mathbbm{k}}
\nc{\Si}{\S i}
\nc{\hc}{\mathbb{H}^*}
\nc{\mc}{\mathcal}
\nc{\Hom}{\mathrm{Hom}}
\nc{\ti}{\tilde}
\nc{\hcBB}{\hc_{B\times B}}
\nc{\si}{\sigma}
\nc{\al}{\alpha}
\nc{\HH}{H\!H}
\nc{\Tor}{\mathrm{Tor}}
\nc{\KR}{\mc{KR}}
\nc{\SHom}{\mathscr{H}\!om}
\nc{\Supp}{\mathrm{Supp}}
\nc{\ASu}{\mathrm{Supp}'}
\nc{\tri}{\tau}
\nc{\ext}{\mathrm{ext}}
\nc{\baet}[1]{\bar{e}(#1,t)}
\nc{\bht}[1]{\bar{h}(#1,t)}
\nc{\A}{\mathcal{A}}
\nc{\AH}{\A_H}
\nc{\AG}{\A_G}
\nc{\Lotimes}{\stackrel{L}{\otimes}}
\nc{\be}{\beta}
\nc{\beBn}{\Bn\be}
\nc{\bepBn}{\Bn\be\be'}
\nc{\ga}{\gamma}
\nc{\BS}[1]{G_{#1}}
\nc{\BSi}{\BS\Bi}
\nc{\Bf}{\mathbf{f}}
\nc{\excise}[1]{}

\nc{\Ba}{\mathbf{a}}
\nc{\Bb}{\mathbf{b}}
\nc{\Bc}{\mathbf{c}}
\nc{\Bn}{\mathbf{n}}

\nc{\ubl}[1]{{P}_{#1}}
\nc{\ubr}[1]{{_{#1}}P}
\newcommand{\becircled}{\mathaccent "7017}
\nc{\oX}{\becircled X}
\nc{\cF}{\mathcal{F}}
\nc{\cG}{\mathcal{G}}
\nc{\om}{\omega}
\nc{\ublr}{\ub{\Bn}\times\ub{\Bn}}
\nc{\Sl}{S_\Bn}
\newcommand{\qbinom}[2]{\genfrac{[}{]}{0pt}{}{#1}{#2}}

\nc{\PP}{\mathbf{P}}

\nc{\D}{\mathbb{D}}

\nc{\End}{\mathrm{End}}
\nc{\codim}{\mathrm{codim}}
\nc{\Tr}{\mathrm{Tr}}
\nc{\dgmod}{\mathrm{dgMod}}
\nc{\res}{\mathrm{res}}
\nc{\ind}{\mathrm{ind}}
\nc{\ub}[1]{P(#1)}
\nc{\bsi}{\boldsymbol{\be}}
\nc{\Dbm}{D^b_{mix}}
\nc{\cE}{\mathcal{E}}
\nc{\FF}{\mathbb{F}}
\nc{\Hec}{\mathbf{H}}
\nc{\bFb}{\mathbf{F}^\bullet}
\nc{\bGb}{\mathbf{G}^\bullet}
\nc{\bM}{\mathbf{M}}
\nc{\Fr}{\mathrm{Fr}}
\nc{\bA}{\mathbb{A}}
\nc{\clo}{\hat}
\nc{\la}{\lambda}

\DeclareMathOperator{\For}{For} 

\DeclareMathOperator{\Gr}{Gr}
\DeclareMathOperator{\gr}{gr}
\nc{\Eul}{\EuScript{E}}
\nc{\K}{\EuScript{K}}
\nc{\modu}{\mathsf{mod}}
\DeclareMathOperator{\Spec}{Spec}

\nc{\st}{\mathrm{st}}

\nc{\pt}{\mathrm{pt}}
\nc{\Stosic}{Sto\v{s}i\'c\xspace}

\UseComputerModernTips 

\oddsidemargin=0in
\evensidemargin=0pt
\topmargin=0in
\headheight=0pt
\headsep=0pt
\setlength{\textheight}{9in}
\setlength{\textwidth}{6.5in}

 \makeatletter 
 \def\revddots{\mathinner{\mkern1mu\raise\p@ 
\vbox{\kern7\p@\hbox{.}}\mkern2mu 
\raise4\p@\hbox{.}\mkern2mu\raise7\p@\hbox{.}\mkern1mu}} 
\makeatother 
  \newtheorem{thm}{Theorem}[section]
  \newtheorem{defi}[thm]{Definition}
  \newtheorem{lemma}[thm]{Lemma}
  
  \newtheorem{prop}[thm]{Proposition}
  \newtheorem{cor}[thm]{Corollary}
  \newtheorem*{theorem*}{Theorem}
\nc{\helv}{\fontfamily{phv}\selectfont}
 
  \newtheorem{ex}[thm]{Example}
  \theoremstyle{remark}
  \newtheorem{remark}{Remark}

\begin{document}
\pagestyle{plain}
\begin{center}
{\LARGE\bf  A geometric construction of\\ colored HOMFLYPT homology}
\vspace{10mm}

  \begin{tabular}{c@{\hspace{20mm}}c}
    {\sc\large Ben Webster}& {\sc\large Geordie Williamson}\\
   \it Department of Mathematics,&\it Mathematical Institute,\\ 
    \it University of Oregon &\it University of Oxford
 \end{tabular}
\vspace{3mm}

Email: {\helv bwebster@uoregon.edu}\\
{\helv geordie.williamson@maths.ox.ac.uk}
\vspace{1mm}

\end{center}
\medskip

{\small
\begin{quote}
  {\it Abstract.}  The aim of this paper is two-fold.  First, we give
  a fully geometric description of the HOMFLYPT homology of Khovanov-Rozansky.
  Our method is to construct this invariant in terms of the cohomology
  of various sheaves on certain algebraic groups, in the same spirit
  as the authors' previous work on Soergel bimodules. All the
  differentials and gradings which appear in the construction of
  HOMFLYPT homology are given a geometric interpretation.

  In fact, with only minor modifications, we can extend this
  construction to give a categorification of the colored HOMFLYPT
  polynomial, {\em colored HOMFLYPT homology}.  We show that it is in
  fact a knot invariant categorifying the colored HOMFLYPT polynomial
  and that this coincides with the categorification proposed by
  Mackaay, \Stosic and Vaz.
\end{quote}
}

\section{Introduction}
\label{sec:introduction}

The {\em colored HOMFLYPT polynomial} is an invariant of links
together with a labeling or ``coloring'' of each component with a
positive integer; in particular, for knots, there is an invariant for
each positive integer. Its most important properties are
\begin{itemize}\item 
  it reduces to the usual HOMFLYPT polynomial when
  all labels are 1, and
\item colored HOMFLYPT encapsulates all Reshetikhin-Turaev invariants
  for the link labeled with wedge powers of the standard
  representation of $\mathfrak{sl}_n$, just as the HOMFLYPT polynomial
  does for the standard representation alone.
\end{itemize}

In this paper we give a geometric construction of a categorification
of this invariant, {\em colored HOMFLYPT homology.}  Like the HOMFLYPT
homology of Khovanov and Rozansky \cite{KR05}, this
associates a triply graded vector space to each colored link such that
the bigraded Euler characteristic is the colored HOMFLYPT polynomial.
In fact, we produce an infinite sequence of such invariants, one for
each page of a spectral sequence, but only the first and second pages
are connected via an Euler characteristic to a known classical invariant.

Our construction and proofs of invariance and categorification are
algebro-geometric in nature and in fact, as a special case we obtain a
new and entirely geometric interpretation of Khovanov's Soergel
bimodule construction of HOMFLYPT homology \cite{Kho05}.  

We also show that this invariant has a purely combinatorial
description via the Hoch\-schild homology of bimodules analogous to that
of Khovanov.  In fact, it coincides with the link homology proposed
from an algebraic perspective by Mackaay, \Stosic and Vaz \cite{MSV}.
Thus, the main result of our paper has an entirely algebraic
statement:
\begin{thm}
  The colored HOMFLYPT homology defined in \cite{MSV} is a
  knot invariant, and its Euler characteristic is the
  colored HOMFLYPT polynomial.
\end{thm}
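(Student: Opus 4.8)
The plan is to deduce the statement from the geometric construction developed below. To a colored braid $\beta$ we will attach a complex $\mathbf{F}^\bullet_\beta$ of equivariant sheaves on a suitable algebraic group (with parabolic subgroups determined by the colors); closing up the braid and applying an equivariant hypercohomology functor $\mathbb{H}^*$ yields a triply graded vector space $\mathcal{H}(\hat\beta)$, and we will also show that $\mathcal{H}$ admits a purely combinatorial incarnation as the Hochschild homology of the complex of singular Soergel bimodules attached to $\beta$. Granting these constructions, the theorem breaks into three parts: (i) $\mathcal{H}(\hat\beta)$ depends only on the colored link $\hat\beta$; (ii) its graded Euler characteristic is the colored HOMFLYPT polynomial; and (iii) the combinatorial incarnation of $\mathcal{H}$ agrees, complex by complex and grading by grading, with the homology of Mackaay, \Stosic and Vaz in \cite{MSV}.

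For part (i) we check invariance under the moves that generate equivalence of colored braid closures. Independence of the presentation of $\beta$ as a braid word is formal: the Rouquier-type complexes of sheaves satisfy the braid relations, which the geometric dictionary transports from the corresponding well-known statement for singular Soergel bimodules. Invariance under conjugation $\beta \rightsquigarrow \gamma\beta\gamma^{-1}$ is the cyclicity of the closure operation, which geometrically is an adjunction and base-change identity for pushforward along the conjugation map. The substantive point is invariance under stabilization --- adding a strand together with a single positive or negative crossing --- which is a genuine computation: one compares the relevant sheaves on the rank $n$ and rank $n+1$ groups by means of a Gysin triangle for the inclusion of the smaller partial flag variety, and pins down the answer using the purity (parity) of the sheaves in play. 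In the colored setting one must in addition check the moves relating a strand colored $k$ to $k$ parallel strands colored $1$, and more generally the splitting and merging of colors; these reduce to the fact that the ``thick'' sheaf is cut out by a categorified Jones--Wenzl projector, together with the cabling identity for Rouquier complexes. I expect this stabilization step, and especially its colored refinement, to be the main obstacle.

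For part (ii), pass to the split Grothendieck group: the complex $\mathbf{F}^\bullet_\beta$ maps to the class of $\beta$ in the Hecke algebra of the symmetric group, resolved through the parabolic modules prescribed by the colors, and the closure maps to the Jones--Ocneanu (Markov) trace. Since the colored HOMFLYPT polynomial is by definition this trace applied to the braid --- equivalently, the output of the HOMFLYPT skein relation on cabled diagrams --- we recover exactly the colored HOMFLYPT polynomial on the relevant page of the spectral sequence; only the normalization and grading shifts remain to be matched.

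For part (iii), we use the combinatorial description of $\mathcal{H}$ as the Hochschild homology of the complex of singular Soergel bimodules of the colored braid. The construction in \cite{MSV} is of precisely this form, so it remains to identify their bimodules with ours --- the singular Soergel bimodules being the relevant $\mathrm{Tor}$-algebras built from the cohomology rings of partial flag varieties --- and to check that the differentials and all three gradings agree. This is essentially bookkeeping, but one must fix conventions carefully (which parabolic is attached to which color, the direction of the complexes, and the homological, internal and Hochschild grading shifts) so that the two theories coincide on the nose rather than up to an overall twist. Assembling (i), (ii) and (iii) yields the theorem.
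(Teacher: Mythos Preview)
Your three-part decomposition (invariance, Euler characteristic, comparison with \cite{MSV}) matches the paper's overall architecture, and parts (ii) and (iii) are essentially what the paper does: decategorification goes through the Hecke algebra and the Jones--Ocneanu trace together with the cabling/projection formula (Section~\ref{sec:decategorification}), and the comparison with \cite{MSV} is indeed a matter of identifying the global chromatographic complex with the complex of singular Soergel bimodules (Section~\ref{sec:bimodules}).

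Part (i), however, has a genuine gap and also diverges from the paper's route. You write that the braid relations are ``formal'' because ``the geometric dictionary transports from the corresponding well-known statement for singular Soergel bimodules.'' But no such statement was available: invariance of Rouquier-type complexes of \emph{singular} Soergel bimodules under the braid and Markov moves is precisely what \cite{MSV} left open and what this paper supplies. Invoking it is circular. Likewise, the ``moves relating a strand colored $k$ to $k$ parallel strands'' and the ``categorified Jones--Wenzl projector'' are not moves one must verify for link invariance; rather, cabling is a \emph{technique} the paper uses to reduce the colored Reidemeister moves to the uncolored ones.

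Concretely, the paper's invariance argument has a two-step structure you do not articulate. First (Section~\ref{sec:invariant}) one proves RI, RIIa, RIII directly and geometrically in the all-$1$'s case, by explicit computations with pushforwards of sheaves on $G_2$ and $G_3$ (for RI the key map is the determinant $d:G_2\to G_1$ and one analyzes the distinguished triangle coming from the Bruhat decomposition; for RIII one uses that both sides push forward to $j_{w_0!}\uk_{Bw_0B}$). Second (Section~\ref{sec:proof-invariance:-gl}) one reduces the colored case to the all-$1$'s case via the cabling Lemma~\ref{cable}, which identifies $\res^{P\times P}_{P\times B}\fF_\sigma$ with $\ind^{P\times B}_{B\times B}\fF_{\sigma_{\mathrm{cab}}}$; for RII and RIII this is almost immediate, while colored RI requires a further argument passing through $S_n$-invariants (Lemma~\ref{localize}). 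Your Gysin-triangle-plus-purity sketch for stabilization is in the right spirit for the uncolored RI, but it does not indicate how the colored RI is handled, and that is where the real work lies.
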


Our definition also has the advantage of categorifying 
essentially all algebraic objects involved in the definition of
colored HOMFLYPT homology.  Let us give a schematic diagram for the
pieces here, with actual operations given by solid arrows, and
(de)cat\-eg\-or\-if\-ic\-ations given by dashed ones:
\begin{equation*}
  \begin{tikzpicture}[ultra thick,->,shorten >=2mm,xscale=1.3,yscale=.9]    
   \usepgflibrary{shapes}
   \begin{scope}[thick]
     \node (braids) at (0,0) [ellipse,draw]
     {$\left\{\parbox{.6in}{ \centering colored braids}\right\}$};
     \node (links) at (8,0) [ellipse,draw]
     {$\left\{\parbox{.6in}{\centering colored links}\right\}$}; \node
     (MOY) at (8,4)
     [shape=rectangle,draw]{$\left\{\parbox{.6in}{\centering MOY
           graphs}\right\}$}; \node (Hecke) at (0,4) [rectangle,draw]
     {$\pi_\be\mathbf{H}_N\pi_\be$}; \node (Cqt) at (4,5)
     [rectangle,draw] {$\C(q,t)$}; \node (XG) at (8,-4)
     [rectangle,rounded corners,draw] {$D_{G_{L}}(X_L)$}; \node
     (sheaf) at (0,-4) [rectangle,rounded corners,draw]
     {$D_{P_\be\times P_\be}(\GL{N})$}; \node (gVect) at (4,-5)
     [rectangle,rounded corners,draw] {$\mathsf{g^3Vect}$};
   \end{scope}

   \draw (braids) to (links)node[pos=.45,above]{$\be\mapsto\clo\be$}; 
   \draw (braids) to (Hecke);
   \draw (braids) to (sheaf) node[midway,left]{$\fF_\be$};
   \draw (Hecke) to (Cqt.west) node[midway,above left ]{$\Tr_{JO}$};
   \draw (sheaf) to (gVect.west) node[near end,below left]{$\hc_{(P_{\be})_{\Delta}}(\GL N;-)$};
   \draw (links) to (MOY);
   \draw (links) to (XG) node[midway,right]{$\F_L$};
   \draw (MOY) to (Cqt.east) node[midway,above right]{eval};
   \draw (XG) to (gVect.east) node[near end,below right]{$\hc_{G_{L}}(X_{L};-)$};
   \draw (braids.north east) to[out=45,in=270] (Cqt.-135)
   node[pos=.4,draw,thick,fill=white,sloped,rotate=-8]{HOMFLYPT};
   \draw (braids.south east) to[out=-45,in=90] (gVect.135) node[rotate=5,sloped,midway,draw,thick,rounded corners,fill=white]{$\KM_2(\clo\be)$};
   \draw[loosely dashed] (sheaf.north west) to[out=120,in=-120] (Hecke.south west);
   \draw[loosely dashed] (XG.north east) to[out=60,in=-60] (MOY.south east);
   \draw[loosely dashed] (gVect.45) to[in=-70,out=70] (Cqt.-45);
\end{tikzpicture}
\end{equation*}
The top half of the diagram shows two different definitions of the colored HOMFLYPT polynomial:
\begin{itemize}
\item The path through \{MOY graphs\} is the description of the
  colored HOMFLYPT polynomial by \cite{MOY}: one associates to a link
  diagram a sum of weighted trivalent graphs, and then defines an
  evaluation function on such graphs, which in turn gives a state sum interpretation of the colored HOMFLYPT polynomial.
\item The path through $\pi_\be\mathbf{H}_N\pi_\be$ is described by \cite{LZ}: to
  each closable colored braid $\be$, we have an associated element of the
  Hecke algebra $\mathbf{H}_N$ where $N$ is the colored braid index of
  $\be$ (the sum of the colorings of the strands).  In fact, this element lies in a certain subalgebra $\pi_\be\mathbf{H}_N\pi_\be$ where $\pi_\be$ is a projection which depends on the coloring of $\be$.  The colored HOMFLY
  polynomial is obtained by applying a certain trace
  $\Tr_{JO}$ defined by Ocneanu \cite{Jon87} on $\mathbf{H}_N$.
\end{itemize}
In this paper, we show how to categorify both of these paths, as is
schematically indicated in the bottom half of the diagram, and briefly
summarized in Section \ref{sec:from-knot-diagrams}.
\begin{itemize}\item 
  The left-most dashed arrow is the isomorphism of
  $\pi_\be\mathbf{H}_N\pi_\be$ with the Grothendieck group of
  bi-equivariant sheaves for the left and right multiplication of a
  subgroup of block upper-triangular matrices $P_\beta$ on $\GL N$.
\item The right-most dashed arrow is a bijection between MOY graphs for a
  link diagram $L$ and a certain collection of simple perverse sheaves
  on a variety $X_L$ which is equivariant for the action of a group
  $G_L$, both depending on the link diagram and to be defined later.
  These are the composition factors of a perverse sheaf assigned to
  the link itself.
\item The central dashed arrow simply indicates taking bigraded
  Euler characteristic of a tri-graded vector space with respect to
  one of its gradings.
\end{itemize}

We must also show that this diagram, including the dashed arrows
``commutes.''  This follows from a result of the authors
giving a similar construction of a Markov trace for the Hecke algebra
of any semi-simple Lie group, shown in the paper 
\cite{WWmar}.

As should be clear from the above, the techniques we
use are those of algebraic geometry and geometric representation
theory.  While these are not familiar to the average topologist, we
have striven to make this paper accessible to the novice, at least if
they are willing accept a few deep results as black boxes.  As a
general rule, our actual calculations are simple and quite geometric
in nature; however, we must cite rather serious machinery
to show that these calculations are meaningful.


\subsection{} Let us briefly indicate the geometric setting in which
we work.  All material covered here is discussed in greater detail in Section
\ref{sec:mixed}.
 
Let $X$ be an algebraic variety defined by equations with 
integer coefficients. (In this paper, our varieties are built
from copies of the general linear group, so we can alway describe them in terms of integral equations.)
To $X$ one may associate a derived category $D^b(X)$ of sheaves with
constructible cohomology.  There are numerous technicalities in the
construction of this category, but we postpone discussion of these
until Section \ref{sec:mixed}.

The category $D^b(X)$ behaves similarly to the the bounded derived
category of constructible sheaves on the complex algebraic variety
defined by these equations. However, since we used integral equations,
we have an alternate perspective on these varieties; one can also
reduce modulo a prime $p$, and work over the finite field $\FF_p$. The
objects in $D^b(X)$ can also be interpreted as sheaves on these
varieties in characteristic $p$, and for technical reasons, this is
the perspective we will take.  In this situation, there is an
extra structure which helps us to understand our complexes of sheaves:
an action of the Frobenius $\Fr$ on our variety.

The category $D^b(X)$ contains a remarkable abelian subcategory 
$P(X)$ of ``mixed perverse sheaves''. For us the most important feature of 
of $P(X)$ is that every object of $P(X)$ has a canonical
``weight filtration'' with semi-simple subquotients, which is defined
using the Frobenius.  

As with any filtration, this leads to a spectral sequence
\[ E_1^{p,q} = \mathbb{H}^{p+q}( \gr^W_{-p} \F) \Rightarrow
\mathbb{H}^{p+q}(\F).\] 

Each term on the left hand side also carries an action of Frobenius
induced by that on the variety.  Considering the norms of the
eigenvalues of Frobenius may be used to give an additional grading to
each page of the spectral sequence.
It follows that each page of the spectral sequence is
\emph{triply} graded.

We will need to consider a generalization of this category, which is a
version of equivariant sheaves for the action of an affine algebraic
group on $X$.  While in principle, the technical difficulties of
understanding such a category could be resolved by working in the
category of stacks, we have found it less burdensome to give a careful
definition of the mixed equivariant derived category from a more
elementary perspective.  For the sake of brevity, this has been done
in a separate note \cite{WWequ}.

\subsection{}
\label{sec:from-knot-diagrams}

In order to apply the above machinery to knot theory, we must define a
sheaf associated to a link.  More precisely, as we discuss in
Section \ref{sec:descr-vari}, to any projection $L$ of a colored link, we
associate the natural graph $\Gamma$ with vertices given by crossings
and edges by arcs.  To this graph, we associate a variety $X_{L}$
together with the action of a reductive group $G_{L}$.
Remembering the crossings in $L$ allows us to construct a
$G_L$-equivariant mixed shifted perverse sheaf $\F_{L} \in D^b_{G_L}(X_L)$. We 
then show that $\F_L$ may be used to construct a series of knot
invariants.

Associated to any filtration on $\F_L$ (as a perverse sheaf), we have
a canonical spectral sequence converging to
$\hc_{G_{L}}(X_{L};\F_L)$. Furthermore, we can endow
$\hc_{G_{L}}(X_{L};-)$ of any mixed sheaf with the weight grading,
which is preserved by all spectral sequence differentials, so we can
think of any page of this spectral sequence as a triply-graded vector
space, where two gradings are given by the usual spectral sequence
structure, and the third by weight.

We call the spectral sequence associated
to the weight filtration {\bf chromatographic}.

\begin{thm}\label{invariance} 
  If $L$ is the diagram of a closed braid, then all pages $E_i$ for $i
  \ge 2$ of the spectral sequence computing $\hc_{G_{L}}(X_{L};\F_L)$
  associated to the weight filtration is an invariant of $L$, up to an
  overall shift in the grading.
\end{thm}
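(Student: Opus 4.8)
\emph{Proof plan.} The plan is to verify the asserted invariance one Markov move at a time. By Markov's theorem, two braids have isotopic closures if and only if they differ by a finite sequence of (i) braid relations (the defining relations of the braid group: $\si_i\si_j=\si_j\si_i$ for $|i-j|\ge 2$, and $\si_i\si_{i+1}\si_i=\si_{i+1}\si_i\si_{i+1}$), (ii) conjugation $\be\rightsquigarrow\ga\be\ga^{-1}$ inside a fixed braid group, and (iii) positive or negative stabilization (passing from $\be\in B_n$ to $\be\si_n^{\pm1}\in B_{n+1}$); for colored links the same holds with the evident colored versions, stabilization being along a strand of color $1$. Since two closed-braid diagrams of $L$ differ by such a sequence, it suffices to show that each page $E_i$ with $i\ge 2$ of the chromatographic spectral sequence is unchanged --- up to one overall shift of the triple grading --- under each of (i), (ii), (iii).

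Moves (i) and (ii) are formal. The object $\F_L$ is assembled by convolution, over the crossings of the braid word, from the elementary objects $\F_{\si_i^{\pm1}}$ on $\GL N$ equivariant for $P_\be\times P_\be$, and $\hc_{G_L}(X_L;\F_L)$ is identified with the geometric Markov trace $\hc_{(P_\be)_\Delta}(\GL N;\F_\be)$ of \cite{WWmar}. The geometric incarnation of Rouquier's theorem --- that the elementary complexes satisfy the braid relations --- follows from the explicit computations on $\GL N$ and its Bott--Samelson resolutions already carried out in the authors' work on Soergel bimodules, so the object attached to a braid word depends on it only up to canonical identification; cyclicity of the geometric trace then gives invariance under conjugation. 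The key point is that morphisms of mixed perverse sheaves are strictly compatible with weight filtrations, and a chain homotopy between complexes of such sheaves is filtered degreewise for the same reason; hence these identifications are filtered (quasi-)isomorphisms, and the chromatographic spectral sequence --- being functorial and invariant under filtered homotopy equivalence --- is carried along from $E_1$ onward, with no shift.

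The real content is the stabilization move (iii), which is also where $E_1$ genuinely changes (hence the restriction to $i\ge2$). A stabilization tensors $\F_\be$ with the two-term elementary complex for $\si_n^{\pm1}$ and then raises the rank by one, applying an extra partial trace; its effect on $\hc$ is governed by a purely local model --- the partial trace over the last strand of $\F_{\si_n^{\pm1}}$ --- which is, up to shift, the equivariant cohomology of a $\P^1$ (positive case) or of a point or $\bA^1$ (negative case) with its canonical weight filtration. One computes this local model explicitly as a \emph{mixed} complex, with its Frobenius eigenvalues, and traces it through the spectral sequence: the excess it introduces on $E_1$ --- a weight-shifted copy of the pre-stabilization $E_1$, coming from the two weight spaces of $H^*(\P^1)$ --- is exactly the source (or target) of a $d_1$ differential, which weight and parity considerations force to be an isomorphism onto it, so it vanishes on passing to $E_2$. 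Thus $E_i(\widehat{\be\si_n^{\pm1}})\cong E_i(\widehat\be)$ for all $i\ge2$ after one overall shift of the three gradings; the shift is a single monomial depending on the writhe (of ``$q$-type'' for one sign, of ``$a,t$-type'' for the other), matching the framing normalization of the colored HOMFLYPT polynomial, and inserting the color of the stabilized strand into the bookkeeping covers the colored case.

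The main obstacle is precisely this last step: one must control the \emph{entire} weight spectral sequence of the local model --- not merely its abutment $\hc$ --- and in particular pin down that $d_1$ annihilates all of the excess and nothing survives to distort $E_2$ or later pages. This needs the elementary sheaves $\F_{\si_n^{\pm1}}$ and their partial traces understood explicitly as mixed sheaves, together with a purity- or parity-vanishing input for the relevant local cohomology groups that rigidifies the differential. A secondary difficulty is organizational: one must check that the shifts produced by individual moves are genuinely overall --- independent of $\be$ and of which strand is stabilized --- and that they compose correctly along a chain of Markov moves, so that the final ambiguity is a single grading shift as claimed. Moves (i) and (ii), by contrast, cost nothing beyond \cite{WWmar} and \cite{WWequ}, since functoriality and strictness of the weight filtration upgrade the relevant identifications to filtered ones for free.
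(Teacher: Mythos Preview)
Your overall strategy---verify invariance Markov move by Markov move---is the same as the paper's (the paper phrases it as Reidemeister I, IIa, III for braidlike diagrams, which is equivalent). For the braid relations and conjugation your plan is fine and essentially matches what the paper does: in the paper the relations $\fF_{\si_i}\star\fF_{\si_i^{-1}}\cong\uk_P$ and $\fF_{\si_i}\star\fF_{\si_{i+1}}\star\fF_{\si_i}\cong\fF_{\si_{i+1}}\star\fF_{\si_i}\star\fF_{\si_{i+1}}$ are established by direct geometric computation on $G_N$ (Sections~7--8), not by citing Rouquier, but the content is the same and the weight filtrations come along for free because the identifications are at the level of sheaves.

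Where your proposal and the paper genuinely diverge is Reidemeister I / stabilization, and here your plan has two gaps.

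First, your claim that in the colored case ``stabilization [is] along a strand of color $1$'' is wrong. If every component of the link is colored $m\ge 2$, any braid presentation has every strand colored $m$, and Markov stabilization adds a new strand crossing the last one; for the closure to be the same \emph{colored} link, the new strand must carry color $m$. So you need RI for an arbitrary $(m,m)$-crossing, not just $(1,1)$. Your final clause ``inserting the color of the stabilized strand into the bookkeeping covers the colored case'' is exactly the step that the paper's Section~8 works hard to justify: it reduces the colored RI to the all-$1$'s case by a cabling argument, using an abelianization lemma ($\hc_{G_n^2}\hookrightarrow\hc_{T_n^2}$ as $S_n$-invariants) together with the observation that over the \emph{symmetric} polynomials every Soergel bimodule is free, so the complex for the $n$-cable of a single crossing is homotopy equivalent to a single regular bimodule after restriction. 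This is not bookkeeping; it is the substantive new input beyond the uncolored case.

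Second, even in the all-$1$'s case your stabilization argument is too schematic. The paper does \emph{not} argue that ``weight and parity considerations force $d_1$ to be an isomorphism'' on some local $H^*(\P^1)$ model. Instead it pushes forward along the determinant $d:G_2\to G_1$, computes ${}^{G'}_G d_*$ on each piece of the weight triangle $\uk_{G_2}\langle 1\rangle\to b_*\uk_{BsB}\langle 1\rangle\to a_*\uk_B(-\nicefrac{1}{2})\to$, and finds that the second arrow becomes zero after pushforward (for the concrete reason that $\Hom(\uk_{X'},\uk_{X'}[i])=0$ for $i<0$). The graded pieces that appear are not $H^*(\P^1)$ but $H^\bullet(\P^\infty)\otimes H^\bullet(G_1)\otimes\uk_{X'}$, reflecting that the kernel of $\phi$ does not act freely on the closed stratum. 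Your ``$d_1$ is forced to be an isomorphism by weight and parity'' would need exactly this kind of explicit identification to become a proof; as stated it is an assertion, not an argument.
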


This description has a similar flavor to that of \cite{KR05} or
\cite{BN}: it begins by assigning a simple object to a single
crossing, and then an algebraic rule for gluing crossings together
(this process can be formalized as an object called a {\bf canopolis}
as introduced by Bar-Natan \cite{BN}; we will discuss this perspective in Section \ref{sec:canopolis}).  However, other papers, such as
\cite{Kho05} or \cite{MSV} have used a description which depended
on the link diagram chosen being a closed braid. In order to
show that our invariants coincide with those of \cite{MSV}, we must
find a geometric description of this form.

Assume that $\be$ is a closable colored braid with coloring given by
positive integers, $\clo{\be}$ its closure and let $N$ be the colored
braid index (the sum of the colorings over the strands of the braid).
Let $P_\be$ be the block upper triangular matrices inside $G_N$ with
the sizes of the blocks given by the coloring of the strands of $\be$.
Using left and right multiplication, we obtain a natural $P_\be\times
P_{\be}$ action on $G_N$.  We let $(P_{\be})_\Delta$ be the diagonal
subgroup, which acts on $G_N$ be conjugation.  By the classical theory
of characteristic classes, we have a canonical isomorphism of
$H^*(BP_\be)$ to partially symmetric polynomials corresponding to the
block sizes of $P_\be$, which we will use freely from now on.
\begin{thm}
  For each $\be$, there is a $P_\be\times P_\be$-equivariant
  complex of sheaves $\fF_\be$ on $\GL{N}$ with a natural 
  filtration, such that the associated spectral sequence computing 
  $\hc_{(P_\be)_{\Delta}}(\GL{N}; \fF_{\be})$ is
  canonically isomorphic 
  to the spectral sequence 
  obtained from the weight filtration for
   $\hc_{G_{\clo{\be}}}(X_{\clo{\be}};\F_{\clo{\be}})$.

Furthermore, we have an isomorphism of the $E_1$ page of the spectral
sequence for the hypercohomology $\hc_{P_\be\times P_\be}(\GL{N}; \fF_\be)$ as a complex
of bimodules over $H^*(BP_\be)$ to the complex of singular Soergel
bimodules considered by Mackaay, \Stosic and Vaz in \cite[\S 8]{MSV}.
\end{thm}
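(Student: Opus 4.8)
The plan is to produce $\fF_\be$ by convolution from one elementary piece per crossing of $\be$, and then to recognize both the geometric invariant $\hc_{G_{\clo\be}}(X_{\clo\be};\F_{\clo\be})$ and the complex of \cite[\S 8]{MSV} as two ways of reading information off this single object. First I would recall the local description of $\F_L$ and of the variety $X_L$ from Section~\ref{sec:descr-vari}: a crossing between strands colored $a$ and $b$ carries a short complex of equivariant sheaves on a product of partial flag varieties for $\GL{a+b}$ (a geometric Rouquier complex, in the spirit of the authors' earlier work on Soergel bimodules), and $\F_L$ is glued from these along the edges of the graph $\Gamma$ of $L$ by the canopolis/fiber-product recipe (see Section~\ref{sec:canopolis}). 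When $L=\clo\be$, the graph $\Gamma$ is a closed ladder: a vertical stack of crossings together with closure arcs identifying the top boundary with the bottom. I would first treat the \emph{open} braid, stacking the crossing correspondences to obtain a $P_\be\times P_\be$-equivariant complex $\fF_\be$ on $\GL{N}$, where the two factors of $P_\be$ record the flag data at the bottom and top of the braid and the convolution is taken over the intervening flags. Its natural filtration is the weight filtration; since $\fF_\be$ is a finite iterated cone of (shifted) pure sheaves, this is also the filtration induced by the homological filtrations of the elementary pieces, so the associated spectral sequence is the chromatographic one.

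Second, the first isomorphism. Closing the braid identifies the top and bottom flag data, i.e.\ it replaces the $P_\be\times P_\be$-action by the diagonal conjugation action of $(P_\be)_\Delta$; geometrically this exhibits $X_{\clo\be}$ with its $G_{\clo\be}$-action as the conjugation-quotient incarnation of $\GL{N}$, under which $\F_{\clo\be}$ corresponds to $\fF_\be$ together with the full graph-gluing data. The content is to check that this is an equivalence of equivariant mixed derived categories carrying one sheaf to the other and, crucially, compatible with weight structures --- which holds because everything is cut out by integral equations and the equivalence commutes with Frobenius. It then gives $\hc_{G_{\clo\be}}(X_{\clo\be};\F_{\clo\be})\cong\hc_{(P_\be)_\Delta}(\GL{N};\fF_\be)$ compatibly with the weight filtrations, so the two spectral sequences agree from the $E_1$ page onward. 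I expect to deduce this step in large part from the Markov-trace construction of \cite{WWmar}. The main obstacle is precisely the bookkeeping here: tracking every group action through the closure operation and verifying that canopolis gluing on the $X_{\clo\be}$ side is literally convolution on the $\GL{N}$ side.

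Finally, the $E_1$ page. The functor $\hc_{P_\be\times P_\be}(\GL{N};-)$ is exact and lands in complexes of graded bimodules over $H^*(BP_\be)$, which by the theory of characteristic classes is the ring of partially symmetric polynomials for the block sizes of $P_\be$. Applied to a single crossing it recovers --- by the parabolic form of Soergel's Hom-formula, available for these intersection cohomology sheaves from earlier in the paper --- the two-term complex of singular Soergel bimodules that \cite{MSV} attach to $\sigma_i^{\pm1}$; applied to the convolution $\fF_\be$ it sends convolution to $\otimes$ over the relevant parabolic invariant rings. Hence $E_1$ is exactly the length-$|\be|$ complex built by tensoring the elementary pieces in the order dictated by $\be$. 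It then remains to match this complex, term by term and differential by differential, with the one in \cite[\S 8]{MSV}: one identifies their rings of partially symmetric functions with our $H^*(BP_\be)$, checks that their elementary complexes agree with ours up to the overall grading shift recorded in the statement, and checks that the differentials coincide. Once the two sets of conventions are aligned, this last comparison is a finite, essentially combinatorial verification.
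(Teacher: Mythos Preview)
Your outline has the right skeleton --- build $\fF_\be$ on $\GL{N}$, identify closure with restriction to the diagonal, and read off singular Soergel bimodules at $E_1$ --- but several specifics are off in ways that matter.

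First, the local data. A colored $(m,n)$-crossing does not carry ``a short complex of sheaves on a product of partial flag varieties''; in the paper the crossing sheaf is a \emph{single} shifted perverse sheaf $j_!\uk_U\langle mn\rangle$ or $j_*\uk_U\langle mn\rangle$ on the group $G_{m+n}$ itself, and a complex appears only upon passing to its weight filtration. That filtration has length $\min(m,n)+1$, not two: $\gr^W_{\mp i}\cong\IC(\overline{\cO_i})(\pm i/2)$ for $0\le i\le\min(m,n)$ (Proposition~\ref{pure-constituents}), and correspondingly the crossing complex $\mathbf{M}^{\pm}$ of \cite[\S 8]{MSV} has $\min(m,n)+1$ terms. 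The ``two-term'' picture is the uncolored case only, so your description of the $E_1$ page would not reproduce the MSV complex in the colored setting.

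Second, the construction and closure step are more elementary than you propose. The paper does not define $\fF_\be$ by iterated convolution; it sets $\fF_\be={}_{G_\be}^{P_\be^+\times P_\be^-}m_*\F_\be$ for an explicit multiplication map $m:X_\be\to G_N$, and only afterwards checks $\fF_{\be\be'}\cong\fF_\be\star\fF_{\be'}$. The substantive point you elide is \emph{why} this pushforward respects weight: $m$ factors as the free quotient by the interior group $G^\iota_\be$, followed by an affine fibration along which $\F_\be$ is smooth, followed by a proper map, and each of these preserves purity. For the closure one needs no equivalence of derived categories and nothing from \cite{WWmar}: one simply has $X_{\clo\be}=X_\be$, $\F_{\clo\be}=\res^{G_\be}_{G_{\clo\be}}\F_\be$, and $G_{\clo\be}\cong\phi^{-1}\big((G_\be)_\Delta\big)$, so restricting equivariance to the diagonal and invoking the weight-preservation of $m_*$ already identifies the two chromatographic spectral sequences. (The paper uses \cite{WWmar} only in Section~\ref{sec:decategorification}, for the Euler characteristic.)

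The $E_1$ identification then proceeds essentially as you say --- compute the crossing complex and show $F(\be\be')\cong F(\be)\otimes_{H^*(BP_{\beBn})}F(\be')$ --- but with the longer crossing complex just described rather than a two-term one.
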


Singular Soergel bimodules have been defined and classified in the
thesis of the second author \cite{Wil}
and in the context of Harish-Chandra bimodules in \cite{StGolod}.
Since previous work of the
authors \cite{WW} has related Hochschild homology to conjugation
equivariant cohomology, we can identify our geometric knot invariant
in terms of such bimodules.

\begin{thm}\label{comparison}
  If $L$ is a closed braid, then the $E_2$-page of our spectral
  sequence is the categorification of the colored HOMFLYPT polynomial
  proposed in \cite{MSV}.

  If all the labels on the components of $L$ are 1, then this agrees
  with the triply-graded link homology as defined by Khovanov and
  Rozansky in \cite{KR05}.
\end{thm}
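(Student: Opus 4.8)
The plan is to deduce this from the (unnumbered) theorem immediately preceding it, together with the authors' earlier identification of Hochschild homology with conjugation-equivariant cohomology in \cite{WW}. By that theorem, the natural filtration on $\fF_\be$ has the property that the $E_1$-page of the spectral sequence computing $\hc_{P_\be\times P_\be}(\GL{N};\fF_\be)$, viewed as a complex of $H^*(BP_\be)$-bimodules, is exactly the complex of singular Soergel bimodules that Mackaay, \Stosic and Vaz use in \cite[\S 8]{MSV} to define their colored HOMFLYPT homology; in particular the associated graded pieces of $\fF_\be$ are sums of shifts of simple (perverse) sheaves whose $\hc_{P_\be\times P_\be}$ recover those bimodules, and the connecting maps of the filtration induce the MSV differential.

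First I would switch the equivariance from $P_\be\times P_\be$ to the diagonal $(P_\be)_\Delta$. The main result of \cite{WW} supplies, for each of the relevant Soergel-type sheaves $\cG$, a natural isomorphism between $\hc_{(P_\be)_\Delta}(\GL{N};\cG)$ and the Hochschild homology $\HH$ of the bimodule $\hc_{P_\be\times P_\be}(\GL{N};\cG)$, under which the weight grading corresponds to the Hochschild homological grading. Applying this termwise along the associated graded of $\fF_\be$, and using the naturality in \cite{WW} to match the connecting maps, the $E_1$-page of the spectral sequence computing $\hc_{(P_\be)_\Delta}(\GL{N};\fF_\be)$ is identified, as a complex of triply graded vector spaces, with the result of applying Hochschild homology termwise to the MSV complex of singular Soergel bimodules --- which is precisely the complex whose homology \cite{MSV} take as the definition of colored HOMFLYPT homology. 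Hence its $E_2$-page is that homology. By the preceding theorem this $(P_\be)_\Delta$-equivariant spectral sequence agrees with the weight (chromatographic) spectral sequence of $\F_{\clo{\be}}$, whose pages $E_i$, $i\ge 2$, are the invariants of Theorem \ref{invariance}; so the $E_2$-page of our spectral sequence is the \cite{MSV} invariant.

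For the second assertion, specializing all labels to $1$ makes $P_\be$ a Borel subgroup, so the singular Soergel bimodules above become ordinary Soergel bimodules and the MSV complex becomes the Rouquier complex used by Khovanov in \cite{Kho05}. The first part then identifies our $E_2$-page with Khovanov's Soergel-bimodule HOMFLYPT homology, which Khovanov shows in \cite{Kho05} coincides with the triply graded link homology of Khovanov and Rozansky \cite{KR05}. (Alternatively, in this case $\fF_\be$ is built directly from the geometric Rouquier complex of the authors' earlier work, allowing a direct comparison without routing through \cite{MSV}.)

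The main obstacle will be the bookkeeping that glues the several spectral sequences together: one must verify that the $d_1$ differential on the $E_1$-page of the $(P_\be)_\Delta$-equivariant spectral sequence really is obtained from the MSV differential by applying Hochschild homology --- i.e., that the isomorphism of \cite{WW} is natural enough to commute with the connecting maps of the filtration on $\fF_\be$ --- and that the weight grading matches the Hochschild-degree (the $a$-grading) in the \cite{MSV} definition. Once these compatibilities are pinned down the conclusion is essentially formal.
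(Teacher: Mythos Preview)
Your proposal is correct and follows essentially the same route as the paper: identify the $E_1$-page of the $(P_\be)_\Delta$-equivariant chromatographic spectral sequence with termwise Hochschild homology of the MSV complex of singular Soergel bimodules via \cite{WW}, then read off the $E_2$-page. The one refinement worth noting is that the paper handles your ``main obstacle'' (naturality of the Hochschild/equivariant-cohomology comparison with respect to the filtration differentials) not by checking compatibilities by hand, but by invoking an equivalence of categories from \cite[Theorem 7]{WWequ} between $D_{P_\Bn\times P_\Bn}(\pt)$ and $R_\Bn$-dg-bimodules, under which $\pi_*\fF_\be$ goes to $F(\be)$ with its weight filtration; this makes the matching of spectral sequences automatic.
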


The higher pages of this spectral sequence are not easy to compute,
and it is not known what their Euler characteristics are.  Whether
they correspond to any classical link invariant is unknown.

\subsection*{Acknowledgments}
We would like to thank: Wolfgang Soergel for his observation that
``{Komplexe von Bimoduln sind die Gewichtsfiltrierung des armen
  Mannes}'' (``{\em Complexes of bimodules are the poor man's weight
filtration}'')
which formed a starting point for this work; Marco
Mackaay for suggesting that it could be generalized to the colored
case and explaining the constructions of \cite{MSV}; Rapha\"el Rouquier and Olaf Schn\"urer for illuminating
discussions; and Catharina Stroppel, Noah Snyder and Carl Mautner for comments on an
earlier version of this paper.
Part of this research was conducted whilst G.W.\ took
part in the program ``Algebraic Lie Theory'' at the Isaac Newton
Institute, Cambridge. B.W.\ was supported by an NSF Postdoctoral Fellowship.

\section{Description of the varieties}
\label{sec:descr-vari}

We start by recalling the steps involved in our categorification,
beginning with a braidlike diagram $L$ of an oriented colored link:
\begin{itemize}
\item To $L$ we associate a reductive group $G_L$ together with a
  $G_L$-variety $X_L$, which only depends on the graph $\Gamma$
  obtained from the diagram $L$ by forgetting under- and
  overcrossings.
\item The crossing data allows us to define a
  $G_L$-equivariant sheaf $\F_L$ on $X_L$.  
\item This sheaf $\F_L$ has a
  chromatographic spectral sequence converging to the
  $G_L$-equivariant hypercohomology of $\F_L$.
\item Each page $E_i$ of this spectral sequence for $i \ge 2$  is a
  knot-invariant (up to overall shift) and the
  $E_2$ page categorifies the colored HOMFLYPT polynomial.
\end{itemize}
In this section we discuss the first step.

\subsection{} First let us fix some notation. We fix throughout a
chain of vector spaces $0 \subset V_1 \subset V_2 \subset V_3 \subset
\cdots$ over $\FF_q$ such that $\dim V_i =i$ for all $i$. Let
\begin{equation*}
G_{i_1,\dots,i_n} := \GL{i_1}\times \cdots \times \GL{i_n},
\end{equation*}
and let
$P_{i_1,\dots,i_n }$ be the block upper-triangular matrices with
blocks $\{i_1,\dots,i_n\}$. We may identify $P_{i_1, i_2, \dots, i_n}$
with the stabilizer in $G_{i_1 + \dots + i_n}$ of the standard partial flag
\begin{equation*}
\{0 \subset V_{i_1}
\subset V_{i_1 + i_2} \subset \dots \subset V_{i_1 + \dots + i_n}\}.
\end{equation*}

Let $L$ be a diagram of an oriented tangle with marked points, with no
marked points occuring at a crossing. Let $\Gamma$ be the oriented
graph obtained by the diagram's projection, with vertices corresponding to 
crossings and marked points in $L$. That is, we simply forget the over and 
undercrossings in $L$.  We deal with
the exterior ends of the tangle in a somewhat unconventional manner;
we do not think of them as vertices in the graph, so we think of the
arcs connecting to the edge as connecting to 1 or 0 vertices. By
adding marked points to $L$ if necessary, we may assume that every
component of $\Gamma$ contains at least one vertex.

Recall that to the diagram $\Gamma$ we wish to associate a variety
$X_L$ acted on by an algebraic group $G_L$. Let us write
$\EuScript{E}(\gG)$ and $\EuScript{V}(\gG)$ for the edges and vertices
of $\gG$ respectively. Given an edge $e \in \EuScript{E}(\gG)$ write
$G_e$ for $G_i$, where $i$ is the label on $e$. Similarly, given $v
\in \EuScript{V}(\gG)$ write $G_v$ for $G_i$ where $i$ is the sum of
the labels on the incoming vertices at $v$. We define
\[
X_L  := \prod_{v \in  \EuScript{V}(\gG)}
 G_v 
\qquad
\text{ and }\qquad
G_L  := \prod 
_{e \in  \EuScript{E}(\gG)}
G_e. \]

It remains to describe how $G_L$ acts on $X_L$. Locally, near any crossing, $\Gamma$ is isotopic to a graph of the form:
\[
\xymatrix@R=0.7cm@C=0.8cm{
\ar[dr]_(.3){e_1} &  &  \\
 & v  \ar[dr]^(.7){e_4} \ar[ur]_(.7){e_2} &  \\
\ar[ur]^(.3){e_3} & & }
\]
We will call $e_1$ and $e_2$ {\bf upper} and $e_3$ and $e_4$ {\bf lower} edges
with respect to the vertex $v$. Whenever a vertex $v$ lies on an edge
$e$ we define an inclusion map $i_e : G_e \to G_v$ which is the
identity if $v$ corresponds to a marked point, and is the composition
of the canonical inclusions
\begin{gather*} G_i \hookrightarrow G_{i,j} \hookrightarrow G_{i+j} \quad \text{if $e$ is upper,} \\
G_i \hookrightarrow G_{j,i} \hookrightarrow G_{i+j} \quad \text{if $e$ is lower.}
\end{gather*}
That is, $G_e$ is included as the upper left or lower right block matrices in $G_v$, according to whether $e$ is upper or lower.

We now describe how $G_L$ acts on $X_L$ by describing the action componentwise. Let $g \in G_e$ and $x \in G_v$. We have
\[
g \cdot x = \begin{cases}
x & \text{if $v$ does not lie on $e$,} \\
x i_e(g)^{-1} & \text{if $e$ is outgoing at $v$,} \\
i_e(g) x & \text{if $e$ is incoming at $v$.} \end{cases} \]

\begin{ex} Here are two examples of $X_L$ and $G_L$:
\begin{itemize}
\item If $L$ is the standard diagram of the unknot labeled $i$ with
  one marked point 

  \begin{equation*}
 \xy 0;/r.8pc/:
(0,0)*{\bullet}; (0,0); **\crv{(1.5,4)&(8,0)&(1.5,-4)}; ?(.5)*\dir{>}; ?(.3)+(.8,.8)*{\scriptstyle i}
\endxy
\end{equation*}
we have $X_L = G_L = G_i$ and $G_L$ acts on $X_L$ by conjugation.
\item Let $L$ be the a diagram of an $(i,j)$-crossing:
\[ 
\begin{array}{c} \xymatrix@=0.4cm{
        \ar@{-}[dr]_<i & &  \\
        & \ar[dr] \\
        \ar[uurr]^<j & & &  } \end{array}
\]
Here $X_L = G_{i+j}$ and $G_D = G_i \times G_j \times G_j \times G_i$ and $(a,b,c,d)$ acts on $x \in G_{i+j}$ by
\[
\left ( \begin{array}{cc}a & 0 \\ 0 &  b \end{array} \right )
x
\left ( \begin{array}{cc}c^{-1} & 0 \\ 0 & d^{-1} \end{array} \right ).
\]
\end{itemize}
\end{ex}

\excise{
\begin{defi}
The variety $X_\gG$ is the product over vertices $$\prod_{v\in \EuScript{V}(\gG)}GL(V_v).$$
\end{defi}
\begin{defi}
  The group $G_\gG$ is the product over edges 
    $$
    \prod_{e\in  \EuScript{E}(\gG)}GL(V_e).
    $$ 
    This group has a product decomposition $G_\gG=G_\gG^\iota\times
    G_\gG^\varepsilon$, for the factors corresponding to the interior
    and exterior vertices.
\end{defi}
We can define an action of $G_\gG$ on $X_\gG$ by defining it on each factor;
the action on  $GL(V_v)$ is as follows:
\begin{itemize}
\item If the edge $e$ is not adjacent to $v$, then the action is trivial. 
\item If $v=\om(e)$, then $\GL{V_e}$ acts on $\GL{V_v}\cong
  \GL{V_e\oplus V_{e'}}$ by inverse right multiplication under the
  obvious inclusion $\iota:\GL{V_e}\hookrightarrow \GL{V_v}$.
\item If $v=\al(e)$, then $\GL{V_e}$ acts by left multiplication under the ``opposite'' inclusion $w_0\iota w_0:\GL{V_e}\hookrightarrow \GL{V_v}$.
\end{itemize}}

This is the variety and group that we shall use in our construction.
But before defining our invariant, we must first cover some
generalities on categories of sheaves on these varieties.

\section{Mixed and equivariant sheaves}
\label{sec:mixed}

In the rest of this paper, we will be using the
machinery of mixed equivariant sheaves.  In this section we intend to
summarize the the essential features of the theory
that are necessary for us, and to indicate to the reader where the details can be found.

\subsection{} 

An important point underlying what follows is that cohomology
of a complex algebraic variety (as well as most variations, such as
equivariant cohomology, or intersection cohomology) has an additional natural
grading, the {\bf weight grading}.  This grading is difficult to describe explicitly without using methods over characteristic $p$ (as we will later), but is best
understood by two simple properties:
\begin{itemize}
\item The weight grading is
  preserved by cup products, pullback and all maps in
  long exact sequences (in fact, by all differentials in any Serre
  spectral sequence).
\item This weight grading is equal to the cohomological grading on smooth projective varieties.
\end{itemize}
\begin{ex}[The cohomology of $\C^*$]
  If we write $\C\P^1$ as the union of $\C$ and $\C\P^1-\{0\}$, then in
  the Mayer-Vietoris sequence, we have an isomorphism $H^2(\C\P^1)\cong
  H^1(\C^*)$.  Thus, the cohomological and weight gradings do not agree on $H^1(\C^*)$.
\end{ex}

We plan to describe homological knot invariants using
the equivariant cohomology of varieties and the weight grading will be
necessary to give all the gradings we expect on our knot homology.

\subsection{Sheaves and perverse sheaves} We must use a
generalization of the weight grading, the weight filtration on a mixed perverse
sheaf. References for this section include \cite{SGA4}, \cite{SGA4h}, \cite{BBD} and \cite{KW}. Although we will not use it below, we should also point out that there is a way to understand mixed perverse sheaves which only uses characteristic 0 methods (Saito's mixed Hodge modules
\cite{Saito86}; see the book of Peter and Steenbrink \cite{PeSt}).

Let $q=p^e$ be a prime power.  We consider throughout a finite field $\FF_q$ with
$q$ elements and an algebraic closure $\FF$ of $\FF_q$. Unless we state otherwise
all varieties and morphisms will be be defined
over $\FF_q$. Given a variety $X$ we will write
$X \otimes \FF$ for its extension of scalars to $\FF$.

We fix a prime number $\ell \ne p$ and let $\fk$ denote the algebraic
closure $\overline{\Q_{\ell}}$ of the field of $\ell$-adic numbers.
Throughout we fix a square root of $q$ in $\fk$ and denote it by
$q^{\nicefrac{1}{2}}$.  Given a variety $Y$ defined over $\FF_q$ or
$\FF$ we denote by $D^b(Y)$ (resp. $D^+(Y)$) the bounded
(resp. bounded below) derived category of constructible $\fk$-sheaves
on $Y$ (see \cite{SGA4h}). By abuse of language we also refer to
objects in $D^b(X)$ or $D^+(X)$ as sheaves. Given a sheaf $\F$ on $X$
we denote by $\F \otimes \FF$ its extension of scalars to a sheaf on
$X \otimes \FF$. Given a sheaf $\F$ on $X$ we abuse notation and write
\[ \hc(\F) := \hc(X \otimes \FF, \F \otimes \FF) = \hc(\F \otimes \FF). \]
We \emph{never} consider hypercohomology before extending scalars.

On the category $D^b(X)$, we have the Verdier duality functor
$\D:D^b(X)\to D^b(X)^{op}$ and for each map $f : X \to Y$, we have
Verdier dual pushforward functors
\begin{equation*}
f_*, f_! : D^b(X) \to D^b(Y)
\end{equation*}
(often denoted $Rf_*$ and $Rf_!$) and Verdier dual pullback functors
\begin{equation*}
f^*, f^! : D^b(Y) \to D^b(X).
\end{equation*}
In $D^b(X)$ we have the full abelian
subcategory $P(X)$ of {\bf perverse sheaves} (see \cite{BBD}). We will
call a sheaf $\F$ {\bf shifted perverse} if $\F[n]$ is perverse for
some $n \in \Z$.

\excise{

\begin{defi}
  An object $\F \in D^b(X)$ is perverse, if there exists a
  decomposition $X = \sqcup X_{\lambda}$ of $X$ into finitely many
  locally closed subsets $X_{\lambda}$ such that, if $j_{\lambda} :
  X_{\lambda} \hookrightarrow X$ denotes the inclusion, we have
\begin{enumerate}
\item $\H^i(j_\lambda^* \F) = 0$ for $i > -\dim X_{\lambda}$,
\item $\H^i(j_i^! \F) = 0$ for $i < -\dim X_{\lambda}$.
\end{enumerate}
We denote the full subcategory of perverse sheaves on $X$ by $\Perv(X)$.
\end{defi}}

\subsection{The Frobenius and its action on sheaves}
\label{sec:frobenius-its-action}

Given any variety $X$ defined over $\FF_q$
 we have the Frobenius morphism
\[ \Fr_q : X \to X \]
which for affine $X \subset \bA^n$ is given by $(x_1, \dots, x_n) \mapsto (x_1^q, \dots, x_1^q)$. The fixed points of $\Fr_{q^n} := (\Fr_q)^n$ are precisely $X(\FF_{q^n})$, the points of $X$ defined over $\FF_{q^n}$.

Given any $\F \in D^b(X)$ we have an isomorphism (see Chapter 5 of \cite{BBD})
\[ F_q^* : \Fr_q^* \F \stackrel{\sim}{\to} \F. \]
and obtain an induced action of $F_{q^n}^* := (F^*_q)^n$ on the stalk of
$\F$ at any point $x \in X(\FF_{q^n})$.  By considering the
eigenvalues of the action of $F_{q^n}^*$ on the stalks of $\F$ at
all points $x \in X(\FF_{q^n})$ for all $n \ge 1$, one defines 
the subcategory of {\bf mixed sheaves} $D^b_m(X)$ as well as the full
subcategories of sheaves
of {\bf weight $\le w$} and {\bf weight $\ge w$} (for
$w \in \Z$)  which we denote $D^b_{\le w}(X)$ and $D^b_{\ge w}(X)$ 
respectively (see Chapter 5 of \cite{BBD}, \cite{Del80} or the first
chapter of \cite{KW}). An object is called {\bf pure of weight $i$} if
it lies in both $D^b_{\le i}(X)$ and $D^b_{\ge i}(X)$.

\excise{
\begin{ex}[Some weight filtrations on $\P^1$]
  If we write $\C\P^1$ as the union of $\C$ and $\C\P^1-\{0\}$, then in
  the Mayer-Vietoris sequence, we have an isomorphism $H^2(\C\P^1)\cong
  H^1(\C^*)$.  Thus, the cohomological and weight gradings do not agree on $H^1(\C^*)$.
\end{ex}}

Given any mixed sheaf $\F$ on $X$ all eigenvalues $\al\in \fk$ of $\Fr_q^*$ on $
\hc(\F)$ are algebraic integers such that all complex numbers with the
same minimal polynomial have the same complex norm, which by abuse of
notation, we denote $|\al|$. As $\F$ is assumed mixed, all such norms will be $q^{i/2}$ for some $i$. Let $\mathbb{H}^*_\al(\F)\subset \hc(\F)$ be the generalized eigenspace of $\al$, and let
\[ \mathbb{H}^{*,i}(\F):=\bigoplus_{|\al|=q^{i/2}}\hc_{\al}(\F).\]

\begin{remark} The constant sheaf on $X$ has a unique mixed structure
  for which the Frobenius acts trivially on all stalks, and its
  hypercohomology is the \'etale cohomology of $X$.  The $i$-th graded
  component of $H^*(X; \fk)$ for the weight grading is
  $H^{*;i}(X;\fk)$. So, our previous discussion was a reflection of
  some of the properties of the Frobenius action on the cohomology of
  algebraic varieties.
\end{remark}

If $X = \Spec \FF_q$ then a perverse sheaf on $X$ is the same
as a finite dimensional $\fk$-vector space together with a
continuous action of the absolute Galois group of $\FF_q$. In
particular we have the {\bf Tate sheaf} $\uk(1)$ which, under the
above equivalence, corresponds to $\fk$ with action of $F_q^*$ given by
$q^{-1}$. Recall that we have fixed a square root $q^{\nicefrac{1}{2}}$ of
$q$ in $\fk$ allowing us to define the {\bf half Tate sheaf}
$\uk(\nicefrac{1}{2})$, with $F_q^*$ acting by $q^{-\nicefrac{1}{2}}$.

Given any $X$ with structure morphism $X \stackrel{a}{\to} \Spec \FF_q$ and any sheaf $\F$ on $X$ we define
\begin{equation*}
\F(m/2) := \F \otimes a^*\uk(\nicefrac{1}{2})^{\otimes m}.
\end{equation*}
The following notation will prove useful:
\[ \F \langle d \rangle = \F[d](d/2). \]
Note that $\langle d \rangle$ preserves weight.

The most important fact about mixed sheaves for our purposes is that
every mixed perverse sheaf $\F$ on $X$ admits a unique increasing filtration $W$, called the {\bf weight filtration}, such that, for all $i$,
\begin{equation*}
\gr_i^W \F := W_i \F / W_{i-1} \F
\end{equation*}
is pure of weight $i$. 

In  fact, after  extension of  scalars to  the algebraic  closure, the
extensions in  this filtration  are the only  way that  mixed perverse
sheaves can fail to be semi-simple.
\begin{thm} \label{thm-gabber}
[Gabber; \cite{BBD} Th\'eor\`eme 5.3.8]
  If $\F$ is a pure perverse sheaf on $X$ 
 then $\F \otimes \FF$ is semi-simple.
\end{thm}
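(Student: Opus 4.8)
This is Gabber's semisimplicity theorem, so the plan is to reproduce the argument of Beilinson--Bernstein--Deligne, which rests entirely on Deligne's theory of weights (Weil II). The proof is an induction on the length of $\F$ in $P(X)$, and its engine is a single weight estimate, which I would isolate first: if $A$ and $B$ are pure perverse sheaves of the same weight on a variety over $\FF$, then $\mathrm{Ext}^1_{D^b}(B,A)$ is mixed of weight $\ge 1$, hence has no nonzero Frobenius-fixed vector. This is the standard weight bound on $\mathrm{Ext}$-groups of mixed sheaves (itself a consequence of the bounds $f^{*}D^b_{\le w}\subseteq D^b_{\le w}$ and $f^{!}D^b_{\ge w}\subseteq D^b_{\ge w}$), which I would take as a black box.

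For the base of the induction, suppose $\F$ is simple over $\FF_q$. By the classification of simple perverse sheaves it is $j_{!*}(\Loc[\dim V])$ for a smooth irreducible subvariety $V$, locally closed and hence open in its closure $\overline V=\Supp\F$, with $\Loc$ an irreducible lisse sheaf on $V$; restricting along the open immersion $V\hookrightarrow\Supp\F$ and using that $j^{*}=j^{!}$ preserves both $D^b_{\le w}$ and $D^b_{\ge w}$, purity of $\F$ forces $\Loc$ to be pure. Here I would invoke the geometric semisimplicity part of Weil II: $\Loc\otimes\FF$ is a direct sum $\bigoplus_a\Loc_a$ of irreducible lisse sheaves, each pure. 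Since $j_{!*}$ commutes with the exact functor $-\otimes\FF$ and with finite direct sums, $\F\otimes\FF=\bigoplus_a j_{!*}(\Loc_a[\dim V])$; and each summand is a \emph{pure} simple perverse sheaf, because $j_{!*}\mathcal M$ is a quotient of $j_!\mathcal M$ and a subobject of $j_*\mathcal M$, so the weight bounds for $j_!$ and $j_*$ pin its weight from both sides. Hence $\F\otimes\FF$ is semisimple.

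For the inductive step I would pass to $S:=\operatorname{soc}(\F\otimes\FF)$. As a canonical subobject it is stable under the Frobenius structure, hence (as $\F$ is pure, so mixed) descends to a subobject $S_0\subseteq\F$ over $\FF_q$; the quotient $Q_0:=\F/S_0$ has smaller length, and both $S_0$ and $Q_0$ are again pure of the same weight, since subobjects and quotients of a pure perverse sheaf are pure of the same weight. By induction $Q_0\otimes\FF$ is semisimple, $S_0\otimes\FF=S$ is semisimple by construction, and the resulting short exact sequence
\[
0\longrightarrow S\longrightarrow \F\otimes\FF\longrightarrow Q_0\otimes\FF\longrightarrow 0
\]
has semisimple pure outer terms of the same weight and, since it descends to $\FF_q$, a Frobenius-fixed extension class in $\mathrm{Ext}^1_{P(X\otimes\FF)}(Q_0\otimes\FF,S)$. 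As $\mathrm{Ext}^1$ in the heart of a $t$-structure injects, Frobenius-equivariantly, into $\mathrm{Ext}^1_{D^b(X\otimes\FF)}(Q_0\otimes\FF,S)$, which by the estimate above has no nonzero Frobenius-fixed vector, the class vanishes; so the sequence splits and $\F\otimes\FF\cong S\oplus Q_0\otimes\FF$ is semisimple.

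The remaining ingredients --- the classification of simple perverse sheaves, exactness of $-\otimes\FF$, injectivity of $\mathrm{Ext}^1$ in the heart, descent of Frobenius-stable subobjects of mixed perverse sheaves, and compatibility of the weight filtration with subs and quotients --- are all formal. The genuine obstacle is that the three facts that actually do the work --- geometric semisimplicity of pure lisse sheaves, the support/cosupport weight bounds for $j_!$ and $j_*$, and the ensuing weight bound on $\mathrm{Ext}^1$ --- together amount to essentially all of Weil II and are deep; in the paper they are (rightly) cited rather than proved.
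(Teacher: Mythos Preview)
The paper does not give a proof of this statement; it is quoted with attribution to Gabber and a reference to \cite{BBD}, Th\'eor\`eme~5.3.8, and is used thereafter as a black box. Your proposal is a correct outline of the proof given in \cite{BBD}, with the deep inputs from Weil~II accurately identified, so there is nothing in the paper itself to compare it against.

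One small remark on efficiency: your base case is heavier than it needs to be. Once you grant the descent step you use in the induction (a Frobenius-stable subobject of $\F\otimes\FF$ comes from a subobject of $\F$), the case of simple $\F$ follows formally without appealing to geometric semisimplicity of pure lisse sheaves: the socle of $\F\otimes\FF$ is Frobenius-stable, hence descends to a nonzero subobject of $\F$, which by simplicity is all of $\F$, so $\F\otimes\FF$ equals its own socle. (This is just Clifford's theorem in disguise, applied to the normal subgroup $\pi_1(X\otimes\FF)\lhd\pi_1(X)$.) All of the genuine depth then sits in the single place you put it in the inductive step --- the weight estimate forcing $\mathrm{Ext}^1$ between pure perverse sheaves of equal weight to have no Frobenius-fixed vector --- which is exactly where Weil~II enters.
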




\subsection{The   function-sheaf    dictionary}   \label{sec:ff}   The
eigenvalues of Frobenius on stalks are also valuable for analyzing the
structure of a given perverse sheaf.  To any mixed perverse sheaf $\F$
(or more  generally, any  mixed sheaf) one  may associate a  family of
functions on $X(\Fqn)$ given by the supertrace of the Frobenius on the
stalks of the cohomology sheaves at those points:
\begin{align*}
[ \F ]_n : X(\FF_{q^n}) & \to \fk \\
x & \mapsto \Tr( F_{q^n}^*, \F_x) :=  \sum (-1)^j \Tr ( F^*_{q^n}, \H^j(\F_x)).
\end{align*}
\begin{prop}\label{funct-sheaf}
  These functions give an injective map from the Grothendieck group of
  the category of mixed perverse sheaves to the abelian group of
  functions on $X(\Fqn)$ for all $n$.  That is, if $\F$ and $\G$ are semi-simple
  and $[\F]_n= [\G]_n$ for all $n$ then $\F$ and $\G$ are isomorphic.

\end{prop}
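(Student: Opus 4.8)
The plan is to reduce the statement to the classical Grothendieck trace formula together with the density of Frobenius eigenvalue data. First I would recall the Grothendieck--Lefschetz trace formula: for any $\F \in D^b_m(X)$ and any $n \ge 1$,
\[
\sum_{x \in X(\FF_{q^n})} [\F]_n(x) = \sum_j (-1)^j \Tr(F_{q^n}^*, \hc^j_c(\F)).
\]
More usefully, for every locally closed subvariety $Z \subset X$ defined over $\FF_q$, applying this to $\F|_Z$ (or rather $j_Z^* \F$ with $j_Z$ the inclusion and taking compactly supported cohomology of the restriction) expresses $\sum_{x \in Z(\FF_{q^n})} [\F]_n(x)$ in terms of Frobenius traces on hypercohomology. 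Thus knowing $[\F]_n = [\G]_n$ as functions on $X(\FF_{q^n})$ for all $n$ forces equality of these "motivic" counts over every subvariety.

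The key step is then to upgrade "equal point-counting functions over all subvarieties and all $n$" to "isomorphic as objects". For this I would argue by Noetherian induction on the support. Choose a stratification of $X$ (common to both $\F$ and $\G$, after refining) into smooth locally closed strata on which the cohomology sheaves of $\F$ and $\G$ are lisse. On an open dense stratum $U$ of the support, $\F|_U$ and $\G|_U$ are (shifts of) lisse sheaves, hence correspond to Galois representations of $\pi_1(U)$; here the function-sheaf correspondence for lisse sheaves — the classical statement that a semisimple lisse sheaf is determined up to isomorphism by its Frobenius traces at all closed points (Chebotarev density / Brauer--Nesbitt for the profinite group $\pi_1(U)$) — shows $\F|_U \cong \G|_U$. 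Since $\F$ and $\G$ are perverse and semisimple, they are direct sums of IC-extensions of such lisse sheaves (up to shift), and the IC-extension is functorial, so the summands of $\F, \G$ with full support agree; subtract them off in the Grothendieck group (using that $[\,\cdot\,]_n$ is additive in triangles) and induct on the now-smaller support.

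The main obstacle, and the place requiring the most care, is the passage from the numerical identity of the trace functions to the sheaf-theoretic Chebotarev step: one must know that the Frobenii $F_{q^n}^*$ at closed points of each stratum generate a dense subgroup of the relevant quotient of $\pi_1$, and that semisimplicity of the perverse sheaves (guaranteed here by hypothesis, and compatible with Gabber's Theorem~\ref{thm-gabber} after extension of scalars) lets us apply Brauer--Nesbitt to conclude isomorphism rather than merely equality of semisimplifications. Everything else — the trace formula, additivity of $[\,\cdot\,]_n$ in distinguished triangles, functoriality of intermediate extension, and the Noetherian induction — is formal. I would also remark that injectivity on the Grothendieck group is exactly the assertion that $[\F]_\bullet = [\G]_\bullet$ implies $[\F] = [\G]$ in $K_0$, which follows immediately once one has the isomorphism statement for semisimple objects, since every class in $K_0$ of mixed perverse sheaves is represented by the semisimplification of a representative.
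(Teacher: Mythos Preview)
Your sketch is essentially correct and follows the standard route to this result. The paper itself does not give a proof but simply cites Laumon \cite[Th\'eor\`eme 1.1.2]{Lau} and Kiehl--Weissauer \cite[Theorem 12.1]{KW}, after noting that additivity in distinguished triangles is immediate from Frobenius-equivariance of the long exact sequence. Your outline --- Noetherian induction on supports, reduction on each open stratum to the case of lisse sheaves, and then Chebotarev density combined with Brauer--Nesbitt to identify semisimple lisse sheaves from their Frobenius traces --- is precisely the argument contained in those references.

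One small remark: the opening paragraph invoking the Grothendieck--Lefschetz trace formula (summing $[\F]_n$ over the $\FF_{q^n}$-points of subvarieties to get traces on compactly supported hypercohomology) is not actually used in the rest of your argument. The induction works directly with the stalkwise values $[\F]_n(x)$, which already encode the Frobenius traces on the lisse restrictions needed for the Chebotarev step; you can drop that paragraph without loss.
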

\begin{proof}
  The fact that these functions give a map of Grothendieck groups is
  just that all maps in the long exact sequence must respect the
  action of the Frobenius, so the supertrace is additive under
  extensions. The proof that this map is injective may be found in
  \cite[Th\'eor\`eme 1.1.2]{Lau} (see also \cite[Theorem 12.1]{KW}).
\end{proof}

This reduces the calculation of the constituents of a weight filtration to a problem of computing $[\F]_n$ for simple perverse sheaves, followed by linear algebra. Indeed, suppose that $\F, \G \in D^b(X)$ are such that
 $[ \F]_n$ and $[\G]_n$ agree for all $n$ with $\G$ semi-simple. As $[\F]_n = \sum [ \gr^W_i \F]_n$ for all $n$ we conclude that $\gr^W_i \F$ is isomorphic to the largest direct summand of $\G$ of weight $i$.

\subsection{The chromatographic complex} 
We want to explain how to move between the weight filtration and a
complex, which we term {\bf the chromatographic complex}, composed of
its pure constituents. For background, the reader is referred to
\cite[Section 1.4]{DeHoII} and \cite[Section 3.1]{BBD}.

Let $\AC$ be an abelian category with enough injectives and let
$D^+(\AC)$ denote its bounded below derived category. We may also
consider the filtered derived category $DF^+(\AC)$ whose objects
consist of $K \in D^+(\AC)$ together with a finite increasing
filtration
\[
\dots \subset W_{i-1}K \subset W_iK \subset W_{i+1}K \subset \dots
\]
(finite means that $W_iK = 0$ for $i \ll 0$ and $W_iK = W_{i+1}K$
for $i \gg 0$).

For all $p$ we define
\[
\gr^W_p K := W^pK / W^{p-1}K.
\]
More generally, for $q \le p$, let
\[
(W^p/W^q)(K) := W^pK / W^qK.
\]

For all $p$ we have a distinguished triangle
\[
\gr^W_p K \to (W^{p+1}/W^{p-1})(K) \to \gr^W_{p+1} K \triright
\]
and in particular a ``boundary'' morphism $\gr_W^{p+1} \to \gr_W^p K[1]$. 
Shifting, we obtain a sequence
\begin{equation}\label{eq-chromo1}
\dots \to \gr^W_{p+1} K[-(p+1)] \to \gr^W_{p} K[-p] \to \gr^W_{p-1}
K[-(p-1)] \to \dots
\end{equation}

\begin{lemma} The morphisms in \eqref{eq-chromo1} define a complex. \end{lemma}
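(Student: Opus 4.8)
The plan is to show that the composite of two consecutive arrows in \eqref{eq-chromo1} vanishes, since the arrows are honest morphisms in $D^+(\AC)$ and what needs checking is precisely that $d^2 = 0$. After the shift, it suffices to prove that the composite
\[
\gr^W_{p+1}K \to \gr^W_p K[1] \to \gr^W_{p-1}K[2]
\]
is zero, where each map is the boundary morphism of the relevant distinguished triangle (the second one shifted by $[1]$).

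First I would fix attention on the three-step filtration $W^{p-1}K \subset W^pK \subset W^{p+1}K$ and record the octahedral/$3\times 3$ diagram it produces: the three quotients $\gr^W_{p-1}$, $\gr^W_p$, $\gr^W_{p+1}$, together with $(W^{p+1}/W^{p-1})(K)$ and the two ``length-two'' quotients, fit into a commuting diagram of distinguished triangles. Concretely, the boundary map $\partial_{p+1}:\gr^W_{p+1}K \to \gr^W_p K[1]$ factors through $(W^{p+1}/W^{p-1})(K)[1]$: it is the composite $\gr^W_{p+1}K \to (W^{p+1}/W^{p-1})(K)[1]$ (boundary of the triangle $\gr^W_p K \to (W^{p+1}/W^{p-1})(K) \to \gr^W_{p+1}K \triright$) followed by the map $(W^{p+1}/W^{p-1})(K)[1] \to \gr^W_p K[1]$ induced by the projection $(W^{p+1}/W^{p-1})(K) \to \gr^W_p K$ (this is dual to the inclusion defining that triangle, after rotation). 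So after composing with $\partial_p[1]$, the middle term $\gr^W_p K[1]$ sits between the map out of $(W^{p+1}/W^{p-1})(K)[1]$ and the boundary map associated to the triangle $\gr^W_{p-1}K \to (W^p/W^{p-2})(K) \to \gr^W_p K \triright$.

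The key point is then the standard fact that in a distinguished triangle $A \to B \to C \xrightarrow{\partial} A[1]$, the composite $B \to C \xrightarrow{\partial} A[1]$ is zero. I would apply this to the triangle $\gr^W_p K \to (W^{p+1}/W^{p-1})(K) \to \gr^W_{p+1}K \xrightarrow{\partial_{p+1}} \gr^W_p K[1]$ in the following rotated form: $\partial_{p+1}$ is, up to sign, the connecting map, and its further composite with the map $\gr^W_p K[1] \to (W^p/W^{p-2})(K)[1]$ used in forming $\partial_p[1]$ is zero because $\gr^W_p K \to (W^p/W^{p-2})(K)$ is precisely the second map of the triangle $\gr^W_{p-1}K \to (W^p/W^{p-2})(K) \to \gr^W_p K \triright$, and $\partial_{p+1}$ followed by the \emph{first} map of a triangle containing $\gr^W_p K[1]$ as the shifted first term lands — again by ``two consecutive maps in a triangle compose to zero'' — in the cone, giving $0$. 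Unwinding: $\partial_p[1] \circ \partial_{p+1}$ factors as $\gr^W_{p+1}K \to (\text{cone term})[1] \xrightarrow{0} \cdots$, hence is $0$.

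The main obstacle, and the only thing requiring care, is bookkeeping: making the factorizations above genuinely commute (as opposed to commute up to sign or up to a non-canonical choice of cone), and in particular checking that the two ``boundary'' morphisms appearing in \eqref{eq-chromo1} are compatible with the maps in the big diagram of the filtration $W^{p-1} \subset W^p \subset W^{p+1} \subset W^{p+2}$. I would handle this by invoking the axioms of a triangulated category applied to the four-step filtration, or equivalently by citing the construction in \cite[Section 3.1]{BBD}: there the morphisms of \eqref{eq-chromo1} are produced exactly as the differentials of a complex, and the content of the lemma is that this differential squares to zero, which is an instance of the octahedral axiom applied to $W^{p-1}K \hookrightarrow W^p K \hookrightarrow W^{p+1}K$. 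No finer structure of $\AC$ (beyond having enough injectives, so $D^+(\AC)$ is triangulated and the filtered derived category behaves as in \cite[Section 3.1]{BBD}) is needed.
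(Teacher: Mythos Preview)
Your overall strategy---invoke the octahedral axiom on a three-step piece of the filtration and then use that two consecutive maps in a distinguished triangle compose to zero---is exactly what the paper does. The problem is that the particular factorization you write down does not exist, and this is not just a bookkeeping issue.

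You claim that $\partial_{p+1}:\gr^W_{p+1}K\to\gr^W_pK[1]$ factors through $(W^{p+1}/W^{p-1})(K)[1]$, with the second half being ``the projection $(W^{p+1}/W^{p-1})(K)\to\gr^W_pK$''. There is no such projection: in the triangle $\gr^W_pK\to(W^{p+1}/W^{p-1})(K)\to\gr^W_{p+1}K\triright$, $\gr^W_pK$ is the \emph{sub}object, not the quotient. Worse, the boundary of that very triangle \emph{is} $\partial_{p+1}$, so you cannot use it to produce a map $\gr^W_{p+1}K\to(W^{p+1}/W^{p-1})(K)[1]$ distinct from $\partial_{p+1}$ followed by the inclusion---and that composite is zero, which is the opposite of what a factorization requires. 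The same direction error reappears when you write ``the map $\gr^W_pK[1]\to(W^p/W^{p-2})(K)[1]$ used in forming $\partial_p[1]$''; again the map goes the other way.

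The fix, and what the paper does, is to factor through the \emph{other} length-two quotient. Form the octahedron on $\gr^W_{p-1}K\to(W^p/W^{p-2})(K)\to(W^{p+1}/W^{p-2})(K)$. One of its output relations says that $\partial_{p+1}$ equals the composite
\[
\gr^W_{p+1}K\longrightarrow (W^p/W^{p-2})(K)[1]\longrightarrow \gr^W_pK[1],
\]
where the first arrow is the boundary of $(W^p/W^{p-2})(K)\to(W^{p+1}/W^{p-2})(K)\to\gr^W_{p+1}K\triright$ and the second is the (shifted) projection from the triangle $\gr^W_{p-1}K\to(W^p/W^{p-2})(K)\to\gr^W_pK\triright$. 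Now compose with $\partial_p[1]:\gr^W_pK[1]\to\gr^W_{p-1}K[2]$: the last two arrows are consecutive in that same (shifted) triangle, hence their composite is zero. So $\partial_p[1]\circ\partial_{p+1}=0$. Once you route the factorization through $(W^p/W^{p-2})(K)[1]$ rather than $(W^{p+1}/W^{p-1})(K)[1]$, your argument becomes correct and coincides with the paper's.
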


\begin{proof}
After completing the (commuting) triangle
\[
\xymatrix{
& ( W^{p+1}/W^{p-1})(K) \ar[dr] \\
\gr^W_p K \ar[ur] \ar[rr] & & ( W^{p+2}/W^{p-1})(K)
}
\]
to an octahedron one sees that the morphism
\[
\gr^W_{p+2}K \to \gr^W_{p+1}K[1] \to \gr^W_{p}K[2]
\]
may be factored as
\[
\gr^W_{p+2} \to W^{p+1}/W^{p-1}(K) \to \gr^W_{p+1}K[1] \to \gr^W_{p}K[2].
\]
However, the second two morphisms form part of a distinguished triangle, and
so their composition is zero.
\end{proof}

Given any left exact functor $T : \AC \to \BC$ between abelian
categories we can consider the hypercohomology objects $R^iT(K) \in
\BC$, obtained by applying $T$ to an injective resolution of
$K$. One has a spectral sequence 
(see \cite[Theorem 2.6]{McC} or
\cite[Section 1.4.5]{DeHoII})
\begin{equation} \label{eq:chrspecseq}
  E_1^{p,q}=R^{p+q}T(\gr^W_{-p}K)\Rightarrow R^{p+q}T(K)
\end{equation}
and a diagram chase shows that the first differential of this spectral
sequence (i.e. the differential on the $E_1$-page) is the same as the differential obtained by applying
$R^qT(-)$ to the complex \eqref{eq-chromo1}.

We now apply these considerations to $D^b(X)$, where $X$ and $D^b(X)$
are as in Section \ref{sec:frobenius-its-action}.  

By work of Beilinson \cite{Bei87}, $D^b(X)$ is equivalent to the
bounded derived category of the abelian subcategory $\Perv(X)$.  Thus,
we can construct a filtration whose successive quotients are pure of
the right degrees by representing an arbitrary object $\G$ as a complex of
perverse sheaves $\F_i$, and taking the weight filtration on each.  We call
this {\bf a weight filtration} on $\G$.  As the choice of article
emphasizes, this is {\bf not} unique; it depends on how we represent
$\G$ as a complex of perverse sheaves.

Applying the above considerations to $\F$ together with its weight
filtration we obtain:

\begin{defi} The {\bf local chromatographic complex} of a
  mixed sheaf $\F \in D^b_m(X)$ is the complex
\[
\dots \to \gr^W_{p+1} \F[-(p+1)] \to \gr^W_{p} \F[-p] \to \gr^W_{p-1}
\F[-(p-1)] \to \dots
\]
Applying $T=\hc(-)$ we obtain the {\bf global chromatographic complex},
\[
\cdots \longrightarrow \hc(\gr^W_{i+1}\F[-(i+1)]) 
\longrightarrow \hc(\gr^W_i\F[-i]) 
\longrightarrow \hc(\gr^W_{i-1}\F[-(i-1)])
\longrightarrow \cdots
\]
\label{chr-ss-def}
  The spectral sequence \eqref{eq:chrspecseq} with $T=\hc(-)$
is the {\bf chromatographic spectral sequence}.
\end{defi}

Unfortunately, this definition is not entirely an invariant of the
object $\G$, but the dependence on choice of filtration is not very
strong.

\begin{prop}
  The chromatographic complexes associated to two different weight
  filtrations on a single object $\G\in D^b(X)$ are homotopy equivalent.
\end{prop}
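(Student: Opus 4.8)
The plan is to reduce the statement to a uniqueness-up-to-homotopy result for the filtered object underlying $\G$. By Beilinson's theorem, a weight filtration on $\G$ amounts to the following data: a choice of bounded complex $\F^\bullet$ of perverse sheaves representing $\G$, together with the (canonical, by uniqueness of the weight filtration on a perverse sheaf) termwise weight filtration on each $\F^i$. So the real content is that any two bounded complexes of perverse sheaves representing the same object of $D^b(X)$ are connected by a zig-zag of quasi-isomorphisms, and that each such quasi-isomorphism induces a homotopy equivalence of the resulting chromatographic complexes. First I would make precise the functor ``chromatographic complex'': from a complex $\F^\bullet$ of perverse sheaves one forms the total object $\bigoplus_i \F^i[-i]$ filtered by total weight (weight of $\F^i$ shifted appropriately), and then passes to the associated graded; the local chromatographic complex is this associated graded with the connecting differentials built from the triangles $\gr^W_p \to W^{p+1}/W^{p-1} \to \gr^W_{p+1}$. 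The point is that this construction is \emph{functorial} in filtered complexes in a suitable derived/$2$-categorical sense.

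The key steps, in order. First, observe that by uniqueness of the weight filtration on a perverse sheaf, any morphism of perverse sheaves is automatically strictly compatible with weight filtrations; hence a morphism of complexes of perverse sheaves automatically induces a morphism of the associated chromatographic complexes, and a homotopy of such morphisms induces a homotopy downstairs. Second, reduce to comparing two representatives related by a single quasi-isomorphism $\F^\bullet \to \G^\bullet$: since the homotopy category of bounded complexes of perverse sheaves localizes to $D^b(\Perv(X)) \simeq D^b(X)$, any two representatives of $\G$ are linked by a finite zig-zag of quasi-isomorphisms, and homotopy equivalence is transitive, so it suffices to treat one arrow. Third, for a quasi-isomorphism $f : \F^\bullet \to \G^\bullet$, show the induced map on chromatographic complexes is a quasi-isomorphism of complexes of pure perverse sheaves; then invoke the fact that over $\FF$ each $\gr^W_p$ is semisimple (Gabber, Theorem~\ref{thm-gabber}), so these complexes are complexes in a semisimple-up-to-the-weight-filtration category. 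Fourth --- and this is where semisimplicity does the work --- a quasi-isomorphism between bounded complexes whose terms are pure perverse sheaves of a fixed weight in each degree is automatically a homotopy equivalence, because $\Ext^{>0}$ vanishes between pure perverse sheaves of equal weight (the obstruction groups to inverting a quasi-isomorphism up to homotopy vanish). One must check the chromatographic differentials are genuinely compatible under $f$: this is the octahedral bookkeeping already rehearsed in the proof of the Lemma above, applied functorially.

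The main obstacle I expect is the coherence/functoriality of the chromatographic complex construction: the connecting morphisms $\gr^W_{p+1} \to \gr^W_p[1]$ come from non-canonical choices inside the triangulated category $D^b(X)$ (cones are only defined up to non-unique isomorphism), so one has to argue that, at the level of complexes of perverse sheaves rather than in the derived category, these maps \emph{are} canonical --- which they are, since for complexes of \emph{perverse} sheaves one can take honest kernels and cokernels in the abelian category $\Perv(X)$ and the filtration quotients are literal subquotients, not cones. Making this passage carefully (from the triangulated-category description used in Definition~\ref{chr-ss-def} to an abelian-category description valid for complexes of perverse sheaves) is the technical heart; once it is done, the homotopy-invariance is a formal consequence of $\Ext^{>0}$-vanishing between equal-weight pure objects over $\FF$ together with transitivity along the zig-zag.
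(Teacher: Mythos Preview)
Your overall strategy --- reduce via a zig-zag to a single quasi-isomorphism $\phi:\F^\bullet\to\F'^\bullet$, use strictness of the weight filtration to get induced maps on each $\gr^W_p$, and invoke Gabber's semisimplicity --- is the same as the paper's. But your step~4 contains a genuine error, and this is exactly the point where the real work lies.

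The claim that ``$\Ext^{>0}$ vanishes between pure perverse sheaves of equal weight'' is false. For instance, on $\P^1$ the sheaf $\uk_{\P^1}[1]$ is pure of weight~$1$, yet $\Ext^2(\uk_{\P^1}[1],\uk_{\P^1}[1])\cong H^2(\P^1)\neq 0$. What Gabber's theorem gives you is much weaker: after extension to $\FF$, the full subcategory of pure-of-weight-$w$ perverse sheaves is semisimple as an abelian category, so every morphism between such objects splits and every short exact sequence staying in that subcategory splits. That is enough to conclude that, \emph{for each fixed $p$}, the map $\gr^W_p\phi:\gr^W_p\F^\bullet\to\gr^W_p\F'^\bullet$ is a homotopy equivalence of complexes of semisimple objects. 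But the differentials of the chromatographic complex go \emph{between} weights: they are classes in $\Ext^1(\gr^W_{p+1},\gr^W_p)$, not morphisms in any semisimple abelian category. So you cannot invoke ``$\Ext^{>0}$ vanishes'' to conclude that a quasi-isomorphism of the full chromatographic complexes is a homotopy equivalence; there is no such obstruction-theoretic shortcut here.

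The paper's argument handles precisely this assembly problem. It regards the chromatographic complex as the total complex of a bicomplex (rows indexed by weight, columns by the homological degree of $\F^\bullet$), observes that the induced map $\phi^\#$ on bicomplexes has kernel whose rows are split acyclic (this is where semisimplicity enters), and then argues that a bicomplex with contractible rows has contractible total complex, allowing one to invoke Gaussian elimination to conclude homotopy equivalence. Your sketch identifies the row-by-row step but treats the passage to the total complex as ``formal'', when in fact it is the crux of the matter and your proposed justification for it does not hold.
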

In particular, this shows that all pages of the chromatographic
spectral sequence after the first are independent of the choice of filtration. 
\begin{proof}
  We note that if $\G$ is quasi-isomorphic to a complex
  $\cdots\to\F_i\to\cdots$, then we obtain a natural bicomplex by
  writing the chromatographic complexes of $\F_i$ vertically, and then
  the maps induced by the original differentials horizontally.  By
  Gabber's theorem, we note that every term in this bicomplex is
  semi-simple, and the horizontal maps go between objects pure of the
  same degree, and thus split.  

  Now assume perverse sheaves $\F_i'$ form another complex isomorphic
  in the derived category to $\G$.  For simplicity, we may assume
  there is a quasi-isomorphism $\phi_i:\F_i\to\F_i'$ between these
  complexes.  This induces a map $\phi^\#$ between our bicomplexes, which is an
  isomorphism after taking horizontal cohomology, since this will give
  us the chromatographic complexes of the perverse cohomology of $\G$.

  Consider the kernel of $\phi^\#$.  This is itself a bicomplex, and
  each of its rows has trivial cohomology, and is split.  Thus, each
  row is homotopic to 0.  Furthermore, we can choose these homotopies
  so that they commute with the vertical differentials, and thus when
  applied to the total complex of the kernel, they show that this
  total complex is null-homotopic.

  We now use the result that any surjective chain map
  whose kernel is homotopic to the zero complex and is a summand of
  the chain complex {\it with the differentials forgotten} is a
  homotopy equivalence (this is a consequence of Gaussian
  elimination). Thus, the chromatographic complex from the $\F_i$'s is
  homotopy equivalent to the total complex of the image of $\phi^\#$, and the
  dual result applied to the inclusion of the image shows that the
  chromatographic complex for $\F_i'$ is also equivalent to this image. 
\end{proof}

\begin{prop}
  The global chromatographic complex is preserved (up to homotopy) by proper
  pushforward.
\end{prop}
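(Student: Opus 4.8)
The plan is to push a weight filtration through the proper pushforward and exploit the fact that hypercohomology factors through it. Fix a proper morphism $f\colon X\to Y$ and a mixed sheaf $\F\in D^b_m(X)$; since $f$ is proper we have $f_*=f_!$, and we must compare the global chromatographic complex of $\F$ with that of $f_*\F$. Two standard facts will do the bookkeeping. First, by proper base change $(f_*\F)\otimes\FF\cong (f\otimes\FF)_*(\F\otimes\FF)$, and composing with global sections gives a canonical isomorphism $\hc(f_*\F)\cong\hc(\F)$; in other words, the left exact functor $T=\hc(Y;-)$ used to build the global complex on $Y$ satisfies $T\circ f_*=\hc(X;-)$. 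Second, $f_*=f_!$ preserves weights: by Deligne's theorem (\cite{Del80}, \cite[\S 5]{BBD}) proper direct image sends a sheaf pure of weight $w$ to one pure of weight $w$, and in general preserves $D^b_{\le w}$ and $D^b_{\ge w}$; in particular it preserves mixedness, so $f_*\F\in D^b_m(Y)$.

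Now given a weight filtration $W_\bullet\F$ on $\F$ — i.e.\ a lift of $\F$ to the filtered derived category with pure graded pieces, obtained as in the discussion preceding Definition \ref{chr-ss-def} — I would apply $f_*$ termwise. Because $f_*$ is triangulated it commutes with the formation of associated graded objects and with all the distinguished triangles and the octahedron used to produce the boundary maps of \eqref{eq-chromo1}; because it preserves purity, $f_*(W_\bullet\F)$ is again a weight filtration, now on $f_*\F$, with $\gr^W_p(f_*\F)=f_*\gr^W_p\F$. (If one wants genuine complexes of perverse sheaves rather than filtered objects, re-represent $f_*\F$ via Beilinson's equivalence $D^b(Y)\simeq D^b(\Perv(Y))$ \cite{Bei87}; the previous Proposition guarantees this changes nothing up to homotopy.) Consequently the local chromatographic complex of $f_*\F$ attached to this filtration is precisely $f_*$ applied to the local chromatographic complex of $\F$.

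Applying $T=\hc(Y;-)$ and using $T\circ f_*=\hc(X;-)$, the global chromatographic complex of $f_*\F$ for this particular filtration is literally equal to the global chromatographic complex of $\F$. Finally I would invoke the preceding Proposition, which asserts that the global chromatographic complex is independent, up to homotopy, of the chosen weight filtration. Hence the equality just obtained for one filtration promotes to a homotopy equivalence between the canonical (up-to-homotopy) global chromatographic complexes of $\F$ and of $f_*\F$, which is exactly the assertion.

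The step I expect to be the main obstacle is the second one: checking carefully that $f_*$ of a weight filtration is again a weight filtration. This is where the genuine input — stability of purity under proper direct image, essentially Weil II — enters, and one must be slightly careful because $f_*$ does not send complexes of perverse sheaves to complexes of perverse sheaves, so the argument has to be run either in the filtered derived category or after re-resolving by Beilinson's theorem. Everything else (proper base change, the Leray identity $\hc(Y;f_*-)=\hc(X;-)$, and the compatibility of the octahedral boundary maps with a triangulated functor) is formal.
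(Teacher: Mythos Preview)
Your argument is correct and is essentially the same as the paper's, which consists of the two sentences ``Proper pushforward preserves purity, and thus sends weight filtrations to weight filtrations. Furthermore, pushforward always preserves hypercohomology.'' You have simply unpacked these two sentences with more care about the technicalities (filtered derived category, Beilinson's realization, invoking the preceding Proposition to handle the non-uniqueness of weight filtrations on a general object).
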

\begin{proof}
  Proper pushforward preserves purity, and thus sends weight
  filtrations to weight filtrations. Furthermore, pushforward always preserves hypercohomology.
\end{proof}

\begin{cor} If we let $E^{*,*}_*$ be the chromatographic spectral sequence, then all differentials preserve the weight grading on hypercohomology.  Furthermore, we have 
\begin{itemize}
\item $E_1^{i,j}=\mathbb{H}^{i+j}(\gr^W_{-j}\F)$ is the global chromatographic complex.
\item $E_2$ is the cohomology of the global chromatographic complex.
\item the chromatographic spectral sequence converges to the
  hypercohomology $\mathbb{H}^{i+j}(\F)$.
\end{itemize}  
\end{cor}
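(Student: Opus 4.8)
The plan is to assemble the three bullet points directly from constructions and results already in hand, since each is essentially a bookkeeping statement about the spectral sequence \eqref{eq:chrspecseq} specialized to $T = \hc(-)$ and to a weight filtration on $\F$. First I would recall that, by Beilinson's theorem, $\F$ is represented by a complex of perverse sheaves, each equipped with its canonical weight filtration, so that $\F$ acquires a weight filtration $W$ in the sense of Definition \ref{chr-ss-def}; the spectral sequence in question is then \eqref{eq:chrspecseq} with this filtration. The reindexing $E_1^{p,q} = R^{p+q}T(\gr^W_{-p}K)$ with $T = \hc(-)$ gives $E_1^{i,j} = \hc^{i+j}(\gr^W_{-j}\F)$; noting that $\hc^{i+j}(\gr^W_{-j}\F) = \hc^{i+j-(-j)}(\gr^W_{-j}\F[-(-j)]) = \mathbb{H}^{i+j}(\gr^W_{-j}\F[j])$ is, up to this shift, the $(-j)$-th term of the global chromatographic complex from Definition \ref{chr-ss-def}. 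This yields the first bullet. For the $E_1$-differential: by the diagram chase quoted just after \eqref{eq:chrspecseq}, the $d_1$ differential is obtained by applying $R^qT(-) = \hc^q(-)$ to the complex \eqref{eq-chromo1}, i.e.\ to the local chromatographic complex; hence $E_2$ is precisely the cohomology of the global chromatographic complex, which is the second bullet.

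For convergence, the third bullet follows because the weight filtration $W$ on $\F$ is finite — each perverse sheaf in the chosen complex has a finite weight filtration, and the complex itself is bounded — so the general convergence statement for the spectral sequence of a filtered object in $D^+(\AC)$ (the reference \cite[Theorem 2.6]{McC} / \cite[Section 1.4.5]{DeHoII} cited above) applies and gives $E_\infty^{i,j} = \gr^W \mathbb{H}^{i+j}(\F)$, so that the spectral sequence converges to $\mathbb{H}^{i+j}(\F)$. The weight-grading claim — that every differential preserves the weight grading on hypercohomology — is the one point that requires a genuine (if short) argument rather than pure reindexing: here I would invoke that each $\gr^W_{-j}\F$ is pure of weight $-j$ by construction of the weight filtration, hence by Deligne's purity theorem $\mathbb{H}^k(\gr^W_{-j}\F)$ is pure of weight $k + (-j)$ wait — more carefully, pure of a weight determined by $k$ and the twist, so that the weight decomposition $\mathbb{H}^{*,i}$ introduced in Section \ref{sec:ff} splits every $E_1$ term; and since all differentials in the spectral sequence arise from morphisms of mixed sheaves (the connecting maps of the distinguished triangles defining the chromatographic complex are morphisms in $D^b_m(X)$), they commute with the Frobenius action and hence preserve each weight-graded piece $\mathbb{H}^{*,i}$. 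One must check this is compatible with passing to later pages, but that is automatic since $E_{r+1}$ is a subquotient of $E_r$ in the category of Frobenius modules.

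The main obstacle I anticipate is not any single deep step but rather keeping the two independent indexings straight: the cohomological/perverse shift bookkeeping inside the chromatographic complex (the $[-p]$ shifts in \eqref{eq-chromo1}) versus the $(p,q)$-indexing of the spectral sequence versus the weight grading coming from Frobenius eigenvalue norms. A careless sign or off-by-one in the identification $E_1^{i,j} = \mathbb{H}^{i+j}(\gr^W_{-j}\F)$ would propagate, so I would fix conventions once at the outset (matching those of \eqref{eq:chrspecseq} and Definition \ref{chr-ss-def}) and then check the three bullets are merely the $r=1$ term, its cohomology at $r=2$, and the abutment, respectively. The only mild subtlety deserving a sentence is the well-definedness of the weight grading on the abutment $\mathbb{H}^*(\F)$ itself: since $\F$ is mixed, $\mathbb{H}^*(\F)$ carries the Frobenius action and hence the decomposition $\mathbb{H}^{*,i}$ directly, independently of the filtration, and the associated graded of $W_\bullet \mathbb{H}^*(\F)$ refines this — so "the weight grading is preserved" is an honest statement about a grading that exists on the abutment a priori, which is exactly what the corollary asserts.
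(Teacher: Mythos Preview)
Your proposal is correct and matches the paper's implicit argument: the corollary carries no proof in the paper precisely because each item is immediate from the spectral sequence \eqref{eq:chrspecseq}, the identification of $d_1$ with the chromatographic differential, and standard convergence for finite filtrations. Your purity detour is both unnecessary and slightly off --- hypercohomology of a pure sheaf is not generally pure absent a properness hypothesis --- but the Frobenius-equivariance argument you land on afterward (the whole spectral sequence lives in Frobenius modules because every structure map comes from a morphism in $D^b_m(X)$) is the right one and is all that is needed.
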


\begin{remark} It seems likely that it is possible to interpret the results of this section  in terms of ``weight structures'', introduced by Bondarko \cite{Bond} and Paukzsello \cite{Pauk}. In particular, Bondarko shows the existence of a functor from a derived category equipped with a suitable weight structure, to the homotopy category of pure complexes in a very general framework. \end{remark} 

\subsection{The equivariant derived category}

We have thus far discussed the theory of perverse sheaves on schemes,
but we will require a generalization of schemes which includes
the quotient of a scheme $X$ by the action of an algebraic group $G$,
which can be understood as $G$-equivariant geometry on $X$.

This quotient can be understood as a stack, but the theory
of perverse sheaves on stacks is not straightforward, and it proved
more suitable to give a treatment of the equivariant derived category
similar to that of Bernstein and Lunts \cite{BL}, but with an eye to
working over characteristic $p$ with the action of the Frobenius (that
is ``in the mixed setting'').  We have done this in a separate
note \cite{WWequ}.

The result is the {\bf bounded below equivariant derived category} $D_G^+(X)$ and its subcategory $D^b_G(X)$ of bounded complexes 
for a variety $X$ acted on by an affine algebraic group $G$.
The resulting formalism is essentially identical to that of
Bernstein and Lunts. We now summarize the essential points.

We have a forgetful functor
\[ \For : D^+_G(X) \to D^+(X) \]
which preserves the subcategories of bounded complexes and, given any $\F \in D^+_G(X)$, the cohomology sheaves of $\For(\F)$ are locally constant along the $G$-orbits on $X$.

Given an equivariant map $f : X \to Y$ of $G$-varieties we have functors
\[ f_*, f_! : D^+_G(X) \to D^+_G(Y)  \]
and 
\[ f^*, f^! : D^+_G(Y) \to D^+_G(X) \]
for equivariant maps $f : X \to Y$ of $G$-varieties. These functors commute with the forgetful functor.

If $H \subset G$ is a closed subgroup and $X$ is a $G$-space we have 
an adjoint pair $(\res_H^G, \ind_H^G)$ of restriction and induction functors
\[ \res_H^G : D_G^+(X) \to D_H^+(X) \qquad \text{ and }\qquad
\ind_H^G : D_H^+(X) \to D_G^+(X).  \]
These functors preserve the subcategories of bounded complexes, and one has an isomorphism $\res^G_{\{ 1 \}} \cong \For$.

More generally, given a map $\phi: H \to G$, a $G$-variety $X$, an
$H$-variety $Y$ and a $\phi$-equivariant map $m : X \to Y$ we have an
adjoint pair $({}^G_H m^*, {}^G_H m_*)$ of functors
\[ {}^G_H m^* : D^+_H(Y) \to D^+_G(X) \; \qquad\text{ and } \;\qquad
{}^G_H m_* : D^+_G(X) \to D^+_H(Y). \] As a special case, we have
${}_H^G\id^*\cong \res^G_H,{}_H^G\id_* \cong \ind^G_H$.  The functor ${}^G_H m^*$
preserves the subcategory of bounded complexes, but this is not true
in general for ${}^G_H m_*$. In fact, this is the reason that we are
forced to consider complexes of sheaves which are not bounded above.

If $G = G_1 \times G_2$ and $G_1$ acts 
freely on $X$ with quotient $X/G_1$ one has the {\bf quotient equivalence}
\begin{equation} \label{quot-equiv}
 D^+_G(X) \cong D^+_{G_2}(X/G_1)
\end{equation}
which restricts to an
equivalence between the subcategories of bounded complexes.  If we let
$\phi : G_1 \times G_2 \to G_2$ denote the projection then the
quotient map $X \to X/G_1$ is $\phi$-equivariant and the above
equivalence is realized by ${}_{G_1 \times G_2}^{G_2}m^*$ and ${}_{G_1
  \times G_2}^{G_2}m_*$.

Many notions carry over immediately using the forgetful
functor $\For :D^+_G(X)\to D^+(X)$.  For example, we call an
object $\F$ in $D^+_G(X)$ {\bf perverse} if and only if
$\For \F$ is perverse.

Moreover if $X$ is defined over $\FF_q$, then we can also incorporate
the action of the Frobenius.  In particular, perverse objects in
$D^+_{G}(X)$ still have weight filtrations, which are preserved by
the restriction functor and
we can extend Proposition~\ref{funct-sheaf} to the
equivariant setting.

\section{Description of the invariant}
\label{sec:descr-invar}
Equipped with these geometric tools, we continue the construction of
our invariant.

\subsection{}
In this subsection we describe the sheaf $\F_L$ on $X_L$.

We first discuss the case of a single $(i,j)$-crossing:
\[
\xymatrix@R=0.5cm@C=0.6cm{
\ar[dr]_<i &  &  \\
 & v  \ar[dr] \ar[ur] &  \\
\ar[ur]^<j & & }
\]
As we have seen $X_L = G_{i+j}$. Consider the big Bruhat cell
\begin{equation}
 U := \{ g \in G_{i+j} \; | \; V_i \cap gV_j = 0 \}\label{u-def}
\end{equation}
and let $j : U \hookrightarrow G_{i+j}$ denote its inclusion. As $U$ is an orbit under $P_{i,j} \times P_{j,i}$ it is certainly $G_L$-invariant. We now define $\F_v  = \F_L \in D_{G_L}(X_L)$ as follows:
  \begin{align*}
    \begin{array}{c}   \xymatrix@=0.4cm{
        \ar[ddrr]_<i & &  \\
        & \ar[ur] \\
        \ar@{-}[ur]^<j & & & }
        \end{array} 
        \mapsto \quad  j_*\uk_U \langle ij \rangle  \\
    \begin{array}{c} \xymatrix@=0.4cm{
        \ar@{-}[dr]_<i & &  \\
        & \ar[dr] \\
        \ar[uurr]^<j & & & } \end{array} \mapsto \quad j_! \uk_U \langle ij \rangle
  \end{align*}
As $U$ is the complement of a divisor in $G_{i+j}$ both these sheaves are shifted perverse.
  
We now consider the case of a general diagram $L$ of an oriented
colored tangle.  After forgetting equivariance, $\F_L$ is simply the
exterior product of the above sheaves associated to each crossing. To
take care of the equivariant structure we need to proceed a little
more carefully.

Let $L$ be the diagram of an oriented colored tangle and $\Gamma$ its
underlying graph. Let $L^{\prime}$ be the diagram obtained from $L$ by
cutting each strand connecting two vertices in $\Gamma$ (so that
$L^{\prime}$ is a disjoint union of $(i,j)$-crossings). Let
$\Gamma^{\prime}$ be the graph corresponding to
$L^{\prime}$. Obviously we have $X_L = X_L^{\prime}$. Note also that
for every $e$ with two vertices in $\Gamma$, we have two edges, which we denote  $e_1$ and $e_2$ in $\Gamma^{\prime}$. We have a natural map $G_L \to
G_L^{\prime}$ which is the identity on factors corresponding to edge
strands, and is the diagonal $G_e \to G_{e_1} \times G_{e_2}$ on the
remaining factors.

We define
\[ \F_{L} := \res_{G^{\prime}}^G \Big(\underset{v\in
  \EuScript{V}(\gG^{\prime})}\boxtimes\F_v \Big) \in
D^b_{G_L}(X_{L}). \]

\excise{
Let $e,e'$ be the two edges such that $v=\om(e)=\om(e')$, with $e$ on
the left side, as shown in (\ref{crossing}).
\begin{defi}
  Let
  \begin{equation}\label{u-def}
    U =\big\{g \in GL(V_v)|V_{e'} \cap w_0g\cdot V_e =\{0\}\big\}
  \end{equation}
  and $j$ denote its inclusion.
\end{defi}
Note that $U$ is invariant under the action of $G_\gG$; in fact, it's
invariant under the right action of the parabolic $P_e$ preserving
$V_e$, and the left action of the parabolic $w_0P_{e'}w_0$ preserving
$w_0V_{e'}$.

Since $X_\gG$ is a product, we can describe an equivariant sheaf on
this space by giving one on each term, and taking exterior tensor
product.  Thus, we can associate a sheaf on $X_\gG$ to the link $L$ by
associating a sheaf on $GL(V_v)$ to each vertex, depending on the type
of crossing.

\begin{defi}
  The sheaf $\F_L$ is the exterior product
  \begin{math}
\F_L=\underset{v\in \EuScript{V}(\gG)}\boxtimes\F_v
  \end{math},
where the $G_\gG$-equivariant sheaf $\F_v$ on $\GL{V_v}$ is depends on whether $v$ corresponds to a positive or negative crossing in the following manner:
  \begin{align*}
    \begin{array}{c}   \xymatrix@=0.4cm{
        \ar[ddrr]_<i & &  \\
        & \ar[ur] \\
        \ar@{-}[ur]^<j & & & }
        \end{array} 
        \mapsto \quad  j_*\uk_U \langle ij \rangle  \\
    \begin{array}{c} \xymatrix@=0.4cm{
        \ar@{-}[dr]_<i & &  \\
        & \ar[dr] \\
        \ar[uurr]^<j & & & } \end{array} \mapsto \quad j_! \uk_U \langle ij \rangle
  \end{align*}

\end{defi}}

Of course, this sheaf depends on the link diagram used; different
diagrams correspond to sheaves on different spaces.  Instead, we will
studying the hypercohomology of these sheaves, and the corresponding chromatographic spectral sequence.

\begin{defi}
  We let $\KM_{i}(L)$ denote the $i$th page of the chromatographic
  spectral sequence (as given by Definition~\ref{chr-ss-def}) for
  $\F_L$.  This is triply graded, where by convention subquotients of $\mathbb{H}^{j-\ell;{j-k}}(\gr^W_\ell\F_L)$ lies in $\KM_i^{j;k;\ell}(L)$.
\end{defi}
\begin{remark}
  These grading conventions may seem strange, but they are an attempt
  to match those already in use in the field.  These conventions are
  almost those of \cite{MSV}, though we will not match perfectly since
  we have different grading shifts in our definition of the complex
  for a single crossing.  We hope the reader finds these choices
  defensible on grounds of geometric naturality.  This simply changes
  the shift we must apply to our invariant to assure it is a true knot
  invariant.
\end{remark}
It is these spaces for $i>1$ which we intend to show are knot invariants (up to shift).

\subsection{Braids and sheaves on groups}
\label{sec:braids-sheav-groups}

As we mentioned in Section \ref{sec:introduction}, in the special case
of a braid $\be$, there is a different perspective on this construction.

Let $\be$ be the diagram of a colored braid on $n$ strands with labels
$\Bn=(i_1, i_2, \dots, i_n)$ and underlying labeled graph
$\Gamma$. Let $N = \sum_{j=1}^n i_j$ denote the colored braid
index. We assume our braid is in generic position, so reading from
start to finish, we fix an order on the vertices $v_1, v_2, \dots,
v_p$ of $\Gamma$.  This corresponds to an expression for $\beta$ in the
standard generators of the braid group.

In the previous section we described how to associate to $\beta$ a
group $G_{\be}$ and a $G_{\be}$-variety $X_{\beta}$.  We can decompose
$G_{\beta}$ as
\[ G_{\be} = G_{\beta}^+ \times G_{\beta}^\iota \times G_{\beta}^- \]
where $G_{\beta}^+$, $G_{\beta}^\iota$ and $G_{\beta}^-$ denote the factors
of $G_{\be}$ corresponding to incoming, interior and outgoing edges of $\Gamma$ respectively.

In what follows we will describe an action of $G_{\beta}^+ \times G_{\beta}^-$ on $G_N$ and a map
\begin{gather*}
m : X_{\beta} \to G_N
\end{gather*}
equivariant with respect to the natural projection $\phi: G_{\be} \to
G_{\beta}^+ \times G_{\beta}^-$.  We will study our sheaf $\F_{\be}$ by considering its equivariant pushforward under this map. 

We start by describing an embedding $\alpha_{v} : G_v \to G_N$ corresponding to each vertex $v \in \Gamma$.
Let us fix a basis $e_1, \dots, e_N$ of $V_N$ and let $W_1, W_2,
\dots, W_n$ be vector spaces (again with fixed bases) of dimensions
$i_1, i_2, \dots ,i_n$ respectively.
\begin{defi}
  Given any permutation $w \in S_n$, we let 
  \[ h_w:W=\bigoplus_{j=1}^n W_j \stackrel{\sim}{\to} V \] be the
  isomorphism defined by mapping the basis vectors of $W_{w^{-1}(1)}$
  to the first $w^{-1}(1)$ basis vectors of $V$ in their natural
  order, the basis vectors of $W_{w^{-1}(2)}$ to the next $w^{-1}(2)$
  basis vectors etc.

For any braid $\be$, we have an
  induced permutation, and by abuse of notation, we let $h_\be$ be the
  map corresponding to this permutation.
\end{defi}
In the obvious basis, this map is a permutation matrix.  The corresponding permutation is a shortest coset representative for the Young subgroup preserving the partition of $[1,N]$ of sizes $i_1,\dots, i_n$, corresponding to the ``cabling'' of the permutation $w$.

Now choose a vertex $v$ in $\Gamma$, let $e^{\prime}$ and
$e^{\prime\prime}$ denote the two incoming edges, which are in the
strands connected to the $j'$th and $j''$th incoming vertex
respectively, so $i_{j'},i_{j''}$ are the labels on $e^{\prime}$ and
$e^{\prime\prime}$. Because we have ordered the vertices of $\Gamma$,
we may factor $\be$ into braids $\al_v\cdot\be_v\cdot\om_v$ with $\be_v$ consisting
of a simple crossing corresponding to $v$. The procedure described in the
previous paragraph yields an embedding $W_{j^{\prime}} \oplus
W_{j^{\prime\prime}}\hookrightarrow W\overset{h_{\al_v}}\longrightarrow V_N$. This induces an embedding
\[ \iota_v : G_v \hookrightarrow G_N \]

We let braids on $n$ strands act on sequences of $n$ elements on the right by the
usual association of a permutation to each braid.  We may then identify
\begin{align*}
G_{\be}^+ & \cong G_{\Bn} \\
G_{\be}^- & \cong G_{\beBn}
\end{align*}
and therefore obtain an action of $G_{\beta}^+ \times G_{\beta}^-$ on
$G_N$ by left and right multiplication. We let
$P^{+}_\be=P_\Bn,P^-_{\be}=P_{\beBn}$.  We denote by $\phi: G_{\be} \to
P_{\beta}^+ \times P_{\beta}^-$ be the composition of the natural
projection with the inclusion $G^\pm_\be\hookrightarrow P^\pm_\be$.

Consider the map
\begin{align*}
m : X_{\be} & \to G_N \\
(g_{v_1}, \dots, g_{v_p}) & \mapsto \iota_{v_1}(g_{v_1})
\iota_{v_2}(g_{v_2}) \dots \iota_{v_p}(g_{v_p})
\end{align*}
It is easy to see that this map is equivariant with respect to $\phi$.

\begin{defi}
  Let $\fF_\be= {}_{G_{\be}}^{P_{\beta}^+ \times P_{\beta}^-} m_*\F_\be$.
\end{defi}

This definition is useful, since it is compatible with braid
multiplication.  We have a diagram of equivariant maps of spaces
\begin{equation*}
  \xy 
  {\ar^(.55){\mu} (20,0)*+{G_N\times G_N}="A"; (50,0)*+{G_N}="C"};
 {\ar^(.55){\pi_2} "A"; (-10,-5)*+{G_N}};
  {\ar_(.55){\pi_1} "A"; (-10,5)*+{G_N}};
\endxy
\end{equation*}
As usual, this diagram can be used to construct the functor of sheaf convolution
  \begin{equation*}
    -\star-:D^b_{P_{\Bn}\times P_{\beBn}}(G_N)\times D^b_{P_{\beBn}\times P_{\bepBn}}(G_N)\to D^b_{P_{\Bn}\times P_{\bepBn}}(G_N)
  \end{equation*}
  \begin{equation*}
\F_1\star\F_2\cong {}_{P_{\Bn}\times P_{\beBn}\times P_{\bepBn}}^{P_{\Bn}\times P_{\bepBn}}\,\mu_*\left(\res^{P_{\Bn}\times P_{\beBn}^2\times P_{\bepBn}}_{P_{\Bn}\times P_{\beBn}\times P_{\bepBn}}(\F_1\boxtimes\F_2)\right).
\end{equation*}

\begin{thm}
  We have a canonical isomorphism
  $\fF_\be\star\fF_{\be'}\cong \fF_{\be\be'}$.
\end{thm}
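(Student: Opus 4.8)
<br>

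The plan is to prove $\fF_\be\star\fF_{\be'}\cong\fF_{\be\be'}$ by unwinding both sides via the definitions of $\fF$, the convolution functor $\star$, and the multiplication map $m$, and reducing everything to a base-change identity for the various pushforward and restriction functors. First I would set up the relevant spaces carefully: write $m_\be:X_\be\to G_N$ and $m_{\be'}:X_{\be'}\to G_N$ for the two multiplication maps, and observe that $X_{\be\be'}=X_\be\times X_{\be'}$ (the vertices of the concatenated braid are the disjoint union of the two vertex sets, so the product of $G_v$'s factors accordingly). Under this identification, the map $m_{\be\be'}:X_{\be\be'}\to G_N$ factors as
\[
X_\be\times X_{\be'}\xrightarrow{\ m_\be\times m_{\be'}\ }G_N\times G_N\xrightarrow{\ \mu\ }G_N,
\]
which is the key geometric observation: multiplying all the $\iota_v(g_v)$ in order is the same as multiplying the first batch, multiplying the second batch, and then multiplying the two results — once one checks that the embeddings $\iota_v$ for vertices of $\be'$, computed inside the concatenated braid, agree with those computed inside $\be'$ alone (this requires that the permutation $h_{\al_v}$ only depends on the portion of the braid below $v$, which holds since $\al_v$ for a vertex of $\be'$ decomposes as $\be$ followed by the corresponding $\al$ inside $\be'$, and $h$ is a group homomorphism on permutations). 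Similarly $\F_{\be\be'}\cong\F_\be\boxtimes\F_{\be'}$ after forgetting equivariance, and the equivariant structures match via the appropriate $\res$ functors.

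Next I would assemble these pieces. By definition $\fF_{\be\be'}={}^{P^+\times P^-}_{G_{\be\be'}}(m_{\be\be'})_*\F_{\be\be'}$, and using the factorization of $m_{\be\be'}$ through $\mu\circ(m_\be\times m_{\be'})$ together with functoriality of ${}_H^G(-)_*$ under composition of equivariant maps, this becomes $\mu_*$ applied to the pushforward of $\F_\be\boxtimes\F_{\be'}$ along $m_\be\times m_{\be'}$, with the correct equivariance groups. On the other side, $\fF_\be\star\fF_{\be'}$ is by the displayed formula $\mu_*$ of a restriction of $\fF_\be\boxtimes\fF_{\be'}=\bigl({}_{G_\be}^{\ }(m_\be)_*\F_\be\bigr)\boxtimes\bigl({}_{G_{\be'}}^{\ }(m_{\be'})_*\F_{\be'}\bigr)$. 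So the whole statement reduces to the compatibility of external product with equivariant pushforward, i.e. an isomorphism $\bigl((m_\be)_*\F_\be\bigr)\boxtimes\bigl((m_{\be'})_*\F_{\be'}\bigr)\cong(m_\be\times m_{\be'})_*\bigl(\F_\be\boxtimes\F_{\be'}\bigr)$ compatible with all the group actions, plus bookkeeping that the equivariance groups (the various products of $P_\Bn$, $P_{\beBn}$, $P_{\bepBn}$ appearing as subscripts and superscripts in the definition of $\star$) match what comes out of the $\fF_{\be\be'}$ side. The Künneth-type isomorphism for $\boxtimes$ and $(-)_*$ is standard in $D^b$; lifting it to the mixed equivariant derived category is covered by the formalism of \cite{WWequ}, and compatibility of the functors ${}_H^G(-)_*$, $\res$, $\boxtimes$ and ordinary $\mu_*$ is exactly the base-change/functoriality package recorded there.

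The main obstacle I expect is not any single deep theorem but the careful bookkeeping of equivariant structures: one must check that the group $G_{\be\be'}^+\times G_{\be\be'}^-$ (equivalently $P_\Bn\times P_{\bepBn}$) and the intermediate group $P_\Bn\times P_{\beBn}\times P_{\bepBn}$ acting on $G_N\times G_N$ are identified correctly on both sides, that the diagonal identifications $G_e\to G_{e_1}\times G_{e_2}$ used in the definition of $\F_L$ for edges internal to $\be\be'$ but connecting a vertex of $\be$ to a vertex of $\be'$ are precisely what is being remembered by the $\res$ in the convolution formula, and that the grading shifts $\langle ij\rangle$ add up the same way (they do, since each crossing contributes its own shift and these are simply concatenated). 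Handling the edges that cross the "cut" between $\be$ and $\be'$ — where in $\Gamma_{\be\be'}$ there is a single edge but in $\Gamma'_{\be}\sqcup\Gamma'_{\be'}$ it is split — is the delicate point, and it is exactly here that the middle factor $P_{\beBn}$ in the convolution (the ``$G_N$ in the middle'' of $G_N\times G_N$) does its job. Once this identification of diagonal subgroups is pinned down, the isomorphism is canonical and compatible with the Frobenius, since all functors involved commute with $\Fr_q^*$.
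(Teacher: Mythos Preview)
Your proposal is correct and takes essentially the same approach as the paper: the paper's own proof is the single sentence ``Immediate from the definition of $\fF$,'' and what you have written is precisely a careful unwinding of that definition. Your identification of the key geometric fact (the factorization $m_{\be\be'}=\mu\circ(m_\be\times m_{\be'})$), together with the K\"unneth/base-change compatibilities and the equivariance bookkeeping for the middle $P_{\beBn}$ factor, is exactly what underlies the one-line proof.
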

We should note that here we are simply claiming that this holds for the composition of diagrams.  We will prove in Sections \ref{sec:invariant} and \ref{sec:proof-invariance:-gl} that the sheaf we associate to a braid doesn't depend on the choice of presentation.
\begin{proof}
  Immediate from the definition of $\fF$.
\end{proof}

As $G^\iota_\be$ acts freely on $X_\be$, and we may factor $m$ as
\[ X_{\be} \to X_{\be} / G^\iota_\be \to G_N. \] One may verify that
the second map is the composition of an affine bundle along which
$\F_\be$ is smooth, and a proper map.  It follows that
${}_{G_{\be}}^{P_{\beta}^+ \times P_{\beta}^-} m_*$ preserves the
weight filtration on $\F_\be$. Hence the chromatographic spectral
sequences for $\F_\be$ and $\fF_\be$ are isomorphic.

Note that if $\be$ is closable, then $\beBn=\Bn$, and $P^\pm_\be$ have
the same image in the group. Thus these subgroups are canonically
isomorphic.  Let $(P_\be)_{\Delta}\subset P_{\beta}^+ \times P_{\beta}^-$ be the diagonal and let $\clo\be$ be
the colored link diagram given by the closure of $\be$.
\begin{thm}
  We have a canonical isomorphism between
  \begin{itemize}
    \item  the chromatographic spectral sequence of $\F_{\clo\be}$ as a
    $G_{\clo\be}$-sheaf and 
    \item  the chromatographic spectral sequence of $\fF_\be$ as a
    $(P_\be)_{\Delta}$-sheaf.
  \end{itemize}
\end{thm}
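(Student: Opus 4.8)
The plan is to deduce this from the comparison just established between the chromatographic spectral sequences of $\F_\be$ (as a $G_\be$-sheaf) and of $\fF_\be$ (as a $P_\be^+\times P_\be^-$-sheaf), by identifying the passage to the closure with a restriction of equivariance to a diagonal subgroup.

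First I would make the combinatorics of the closure explicit. The graph $\Gamma_{\clo\be}$ has the same vertices as $\Gamma_\be$, and its edges are the interior edges of $\Gamma_\be$ together with $n$ new ``closure'' edges, one for each boundary position, each obtained by gluing the top exterior edge of $\Gamma_\be$ in position $j$ to the bottom exterior edge in the position onto which the braid permutation carries it; this gluing makes sense precisely because $\be$ is closable, i.e.\ $\beBn=\Bn$. Consequently $X_{\clo\be}=X_\be$ and $G_{\clo\be}=G_\be^\iota\times G_\Bn$, and a direct inspection of the action shows that the $G_{\clo\be}$-action on $X_{\clo\be}$ is the restriction of the $G_\be$-action along the embedding $c:G_{\clo\be}\hookrightarrow G_\be$ which is the identity on $G_\be^\iota$ and, under the canonical identifications $G_\be^+\cong G_\Bn\cong G_\be^-$, the diagonal $G_\Bn\hookrightarrow G_\be^+\times G_\be^-$ on the remaining factor; moreover $\phi\circ c$ factors through the inclusion $(P_\be)_\Delta\hookrightarrow P_\be^+\times P_\be^-$. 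Now $\F_\be$ and $\F_{\clo\be}$ arise from one and the same sheaf $\boxtimes_v\F_v$ by restricting the equivariance, from the group $G'$ attached to the ``cut'' graph down to $G_\be$, respectively down to $G_{\clo\be}$, and $G_{\clo\be}\subseteq G_\be\subseteq G'$; so transitivity of restriction yields a canonical isomorphism $\F_{\clo\be}\cong\res^{G_\be}_{G_{\clo\be}}\F_\be$. On the other hand, by construction $\fF_\be$ viewed as a $(P_\be)_\Delta$-sheaf is $\res^{P_\be^+\times P_\be^-}_{(P_\be)_\Delta}\,{}^{P_\be^+\times P_\be^-}_{G_\be}m_*\F_\be$.

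Next I would invoke base change in the equivariant derived category of \cite{WWequ}: restriction of equivariance commutes with equivariant pushforward for the commuting square of group homomorphisms $G_{\clo\be}\to(P_\be)_\Delta$, $G_\be\to P_\be^+\times P_\be^-$ lying over the map $m:X_\be\to G_N$. Together with $\F_{\clo\be}\cong\res^{G_\be}_{G_{\clo\be}}\F_\be$ this produces a canonical isomorphism
\[ \res^{P_\be^+\times P_\be^-}_{(P_\be)_\Delta}\fF_\be\;\cong\;{}^{(P_\be)_\Delta}_{G_{\clo\be}}m_*\F_{\clo\be}, \]
and hence, pushing on to a point, $\hc_{(P_\be)_\Delta}(G_N;\fF_\be)\cong\hc_{G_{\clo\be}}(X_{\clo\be};\F_{\clo\be})$. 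It then remains to check that ${}^{(P_\be)_\Delta}_{G_{\clo\be}}m_*$ sends the weight filtration of the shifted perverse sheaf $\F_{\clo\be}$ to a weight filtration of $\fF_\be$, so that the two chromatographic spectral sequences are identified. This is the same verification as in the braid case: $m$ factors as the quotient by the free action of the direct factor $G_\be^\iota\subseteq G_{\clo\be}$, followed by a map $X_\be/G_\be^\iota\to G_N$ which is the composite of an affine bundle along which $\F_\be$ is smooth with a proper map, now carrying the residual $(G_\Bn)_\Delta$-equivariance; the quotient equivalence, pushforward along the affine bundle, and proper pushforward each preserve purity, so each $m_*\gr^W_i\F_{\clo\be}$ is pure of weight $i$. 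Assembling these isomorphisms identifies the chromatographic complex, spectral sequence and limit of $\fF_\be$ with those of $\F_{\clo\be}$, and naturality throughout yields the canonicity asserted.

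The step I expect to be the main obstacle is the first: the bookkeeping required to verify that the $G_{\clo\be}$-action on $X_{\clo\be}$ is exactly the $c$-restriction of the $G_\be$-action, and that the permutation datum of $\be$ makes $(P_\be)_\Delta$ literally the diagonal subgroup under the canonical identifications --- together with isolating the precise base-change isomorphism used in the second step inside the formalism of \cite{WWequ}. Once these are in hand, the rest is a direct reprise of the argument already carried out for $\F_\be$ and $\fF_\be$.
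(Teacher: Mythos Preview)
Your proposal is correct and follows essentially the same approach as the paper: identify $X_{\clo\be}=X_\be$ and $\F_{\clo\be}=\res^{G_\be}_{G_{\clo\be}}\F_\be$, recognize $G_{\clo\be}$ as the preimage of the diagonal under $\phi$, and then use that the equivariant pushforward $m_*$ preserves the weight filtration (already established just before the theorem) so that the two chromatographic spectral sequences coincide. The paper packages the group-theoretic step slightly differently---it first uses that $P_*$ and $G_*$ are homotopy equivalent (so $\res^{P_*}_{G_*}$ is fully faithful) to replace $(P_\be)_\Delta$ by $(G_\be)_\Delta$, and then states once and for all that the spectral sequences of $\res^{G_\be}_{\phi^{-1}(H)}\F_\be$ and $\res^{G_\be^+\times G_\be^-}_H\fF_\be$ agree for \emph{any} $H\subset G_\be^+\times G_\be^-$, specializing to $H=(G_\be)_\Delta$---whereas you invoke an explicit base-change isomorphism for the square of group maps and re-verify weight preservation in the diagonal setting; but these are reformulations of the same argument rather than genuinely different routes.
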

\begin{proof}
  Since $P_*$ and $G_*$ are homotopy equivalent, the functor
  $\res^{P_*}_{G_*}$ is fully faithful, so we may work with their 
  restrictions instead.  We have already observed
  that the weight filtration on $\fF_\be$ and the pushforward of the weight filtration on $\F_{\be}$ agree. Thus
  the equivariant chromatographic spectral sequences of
  $\res^{G_\be}_{\phi^{-1}(H)}\F_\be$ and $\res^{G^+_\be\times
    G^-_\be}_H\fF_\be$ are canonically isomorphic for any subgroup
  $H\subset G^+_\be\times G^-_\be$.

On the other hand, we have a canonical
  identification $G_{\clo\be}\cong \phi^{-1}
  \big((G_\be)_{\Delta}\big)$, and $X_\be=X_{\clo\be}$, with $\F_{\clo\be}=\res^{G_\be}_{G_{\clo\be}}\F_\be$.  The result follows.
\end{proof}

\section{Analyzing an $(m,n)$-Crossing}

\subsection{} In this section we work out all the details for an
$(m,n)$-crossing. This will be of use in expressing the invariant in terms of bimodules.

We consider an $(m,n)$-crossing. Its underlying graph is
\begin{equation*}
\xymatrix@R=0.5cm@C=0.8cm{
 \ar[dr]_m & &  \\
 & \bullet \ar[ur]_n \ar[dr]^m \\
 \ar[ur]^n & &  }
\end{equation*}
and the variety in question is $G_{m+n}$ acted on by $P_{m,n} \times P_{n,m}$ by left and right multiplication: $(p,q) \cdot g = pgq^{-1}$ for $g \in G_{n+m}$ and $(p,q) \in P_{m,n} \times P_{n,m}$. The orbits under this
action are
\begin{equation*}
\cO_i = \{ g \in G_{m+n} \; | \; \dim V_m \cap gV_n = i \} \; \text{for } 0 \le i \le \min(n,m).
\end{equation*}
Clearly $\cO_j \subset \overline{\cO_i}$ if and only if $j > i$. For all
$0 \le i \le \min(n,m)$ we denote the inclusion of the orbit $\cO_i$ by $f_i : \cO_i \hookrightarrow G_{n+m}$.

For each orbit $\cO_i$ we have the corresponding intersection cohomology complex. It will prove natural to normalize them by requiring
\[ \IC(\overline{\cO_i}) _{|\cO_i} \cong \uk_{\cO_i}\langle nm - i^2 \rangle. \]
Under this normalization each $\IC(\overline{\cO_i})$ is pure of weight 0.

We first describe resolutions for the closures $\overline{\cO_i}
\subset G_{m+n}$.  Consider the variety
\begin{equation*}
\widetilde{\cO_i} = \{ (W, g) \in \Gr_i^m  \times G_{m+n} \; | \; W \subset V_m\cap gV_n \}.
\end{equation*}
We have an action of $P_{m,n} \times P_{n,m}$ on $\widetilde{\cO_i}$ given by $(p,q) \cdot (W, g) = (pW, pgq^{-1})$. The second projection induces an equivariant map:
\begin{equation*}
\pi_i : \widetilde{\cO_i} \to \overline{\cO_i}.
\end{equation*}

\begin{prop}\label{small} This is a small resolution of singularities. \end{prop}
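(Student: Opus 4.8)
The plan is to verify directly that $\pi_i$ is proper, surjective, and semismall with no relevant strata of positive defect — in fact that the generic fiber is a point and all fibers have dimension strictly smaller than half the codimension of the corresponding stratum. First I would check that $\widetilde{\cO_i}$ is smooth: it fibers over the Grassmannian $\Gr_i^m$ with fiber over $W$ equal to $\{g \in G_{m+n} \mid W \subset gV_n\}$, which is a homogeneous space for $G_{m+n}$ (namely the preimage of an open-orbit condition, or more precisely a fiber bundle over the partial flag variety $\{U \in \Gr_n^{m+n} \mid W \subset U\}$ with $GL$-fibers), hence smooth of constant dimension. So $\widetilde{\cO_i}$ is a smooth irreducible variety, and $\pi_i$ is projective (the Grassmannian factor is proper and the map to $G_{m+n}$ forgets it), hence proper. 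Surjectivity onto $\overline{\cO_i}$ is clear since $\overline{\cO_i} = \{g \mid \dim(V_m \cap gV_n) \ge i\}$ and any such $g$ admits some $W \subset V_m \cap gV_n$ of dimension $i$.

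Next I would compute the fibers. Over a point $g \in \cO_j$ with $j \ge i$, the fiber $\pi_i^{-1}(g)$ is $\{W \in \Gr_i^m \mid W \subset V_m \cap gV_n\} = \Gr_i(V_m \cap gV_n) = \Gr_i(\C^j)$, the Grassmannian of $i$-planes in a $j$-dimensional space, which has dimension $i(j-i)$. In particular over the open orbit $\cO_i$ the fiber is a point, so $\pi_i$ is birational. It remains to check the semismallness inequality: for each $j > i$ one needs $2\dim \pi_i^{-1}(\cO_j\text{-point}) < \codim_{\overline{\cO_i}} \cO_j$, i.e. $2i(j-i) < \dim \cO_i - \dim \cO_j$. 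I would compute $\dim \cO_j$ explicitly; since $\cO_j$ is a single $P_{m,n}\times P_{n,m}$-orbit, $\dim \cO_j = \dim(P_{m,n}\times P_{n,m}) - \dim \mathrm{Stab}$, or more cleanly via the description $\cO_j \cong \{(U_1,U_2) : U_1 \in \Gr_n^{m+n}, \dim(V_m\cap U_1)=j\} \times (\text{torsor})$; one finds $\dim \cO_i - \dim \cO_j = (j-i)(j+i) = j^2 - i^2$ (this is consistent with the normalization $\IC(\overline{\cO_i})_{|\cO_i} \cong \uk\langle nm - i^2\rangle$ appearing in the paper, and with the general fact that the relevant shifts differ by codimension). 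Then the required inequality becomes $2i(j-i) < j^2 - i^2 = (j-i)(j+i)$, i.e. $2i < j+i$, i.e. $i < j$, which holds. Since moreover the strict inequality holds for every $j > i$, the resolution is not merely semismall but small, so $\pi_{i*}\uk_{\widetilde{\cO_i}}[\dim]$ is the intersection cohomology sheaf $\IC(\overline{\cO_i})$ (up to the normalizing shift), and in particular $\pi_i$ is a small resolution of singularities.

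The main obstacle, and the only genuinely computational point, is getting the orbit dimensions $\dim\cO_j$ exactly right — equivalently identifying $\codim_{\overline{\cO_i}}\cO_j = j^2-i^2$ — since everything else (smoothness of $\widetilde{\cO_i}$, properness, birationality) is formal. I would nail this down either by a direct stabilizer count in $P_{m,n}\times P_{n,m}$ acting on $G_{m+n}$, or by reducing to the well-known stratification of a product of two Grassmannians by relative position and citing the standard dimension formula there; the bookkeeping is routine but must be done carefully to confirm the strict inequality $i<j$ that makes the resolution small rather than merely semismall.
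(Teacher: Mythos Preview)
Your proposal is correct and follows essentially the same route as the paper: identify the fiber over a point of $\cO_j$ as $\Gr_i^j$ of dimension $i(j-i)$, compute $\codim_{\overline{\cO_i}}\cO_j = j^2 - i^2$, and check $2i(j-i) < (j-i)(j+i)$, which reduces to $i<j$. The only difference is how the codimension is obtained: the paper observes that $\cO_i$ is the locus where the induced map $V_n \to V_{m+n}/V_m$ (an $n\times n$ matrix) has rank $n-i$, so its codimension in $G_{m+n}$ equals that of rank-$(n-i)$ matrices in $\mathrm{Mat}_{n\times n}$, namely $i^2$; your proposed stabilizer count or Schubert-cell description reaches the same number by a slightly longer but equally valid path.
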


\begin{proof}
The morphism $\pi_i$ is patently an isomorphism over $\cO_i$. Since
$\cO_i$ is exactly the subset of $G_{n+m}$ where the induced map
$V_n\to V/V_m$ has rank $n-i$, we have that $\cO_i$ has the same
codimension  in $G_{m+n}$ as the space of rank $n-i$ matrices in $G_n$, which is $i^2$. 
Hence, for $j < i $, $\cO_i$ is of codimension $i^2 - j^2$ in $\overline{\cO_j}$. Over any $x \in \cO_j$ the fiber is the Grassmannian $\Gr_i^j$. Thus
\begin{equation*}
2 \dim \pi_i^{-1}(x) = 2i(j-i) < (j + i)(j-i) = \codim _{\overline{\cO_i}} \cO_j. \qedhere
\end{equation*}
\end{proof}

\begin{cor}\label{IC=res}
  \begin{math}
\displaystyle{\IC(\overline{\cO_i}) \cong \pi_{i*} \uk_{\widetilde{\cO_i}}\langle nm-i^2 \rangle.}
\end{math}
\end{cor}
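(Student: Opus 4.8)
The plan is to deduce Corollary~\ref{IC=res} directly from Proposition~\ref{small} together with the decomposition theorem. Since $\pi_i : \widetilde{\cO_i} \to \overline{\cO_i}$ is a proper map from a smooth variety, the constant sheaf $\uk_{\widetilde{\cO_i}}$ (suitably shifted to be perverse) is pure, so by the decomposition theorem $\pi_{i*}$ of it decomposes, after extension of scalars, as a direct sum of shifted intersection cohomology complexes of closures of $P_{m,n}\times P_{n,m}$-orbits in $\overline{\cO_i}$. The point of smallness is precisely that this sum has only one term.

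First I would fix the normalization. The variety $\widetilde{\cO_i}$ is smooth (it is a Grassmannian bundle over an open subset of a partial flag-type variety, or more simply a vector-bundle-like object over $\Gr_i^m$ fibered inside $G_{m+n}$), of the same dimension as $\overline{\cO_i}$, namely $\dim G_{m+n} - (nm - i^2)$ — this is exactly the content of the codimension computation in the proof of Proposition~\ref{small}. Hence $\uk_{\widetilde{\cO_i}}\langle nm - i^2\rangle$ is a perverse sheaf, pure of weight $0$ (the shift $\langle d\rangle = [d](d/2)$ is weight-preserving and the constant sheaf on a smooth variety with trivial Frobenius action is pure of weight $0$). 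Since $\pi_i$ is proper, $\pi_{i*}$ preserves purity (this is Deligne's theorem, invoked implicitly in the excerpt's discussion of proper pushforward and purity), so $\pi_{i*}\uk_{\widetilde{\cO_i}}\langle nm-i^2\rangle$ is pure of weight $0$.

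Next I would invoke smallness. By definition of a small morphism, $\pi_{i*}\uk_{\widetilde{\cO_i}}[\dim\widetilde{\cO_i}]$ is the intersection cohomology extension of its restriction to the open dense orbit $\cO_i$, i.e. $\IC(\overline{\cO_i})$ once normalized by the restriction $\uk_{\cO_i}\langle nm-i^2\rangle$ over $\cO_i$; this restriction agrees with the normalization $\IC(\overline{\cO_i})_{|\cO_i}\cong\uk_{\cO_i}\langle nm - i^2\rangle$ fixed in the text, since $\pi_i$ is an isomorphism over $\cO_i$. Concretely: both sides are shifted perverse, pure of weight $0$, and restrict to the same sheaf on the open orbit; the standard support and cosupport estimates guaranteed by smallness force $\pi_{i*}\uk_{\widetilde{\cO_i}}\langle nm-i^2\rangle$ to satisfy the defining support conditions of $\IC(\overline{\cO_i})$, and an $\IC$ sheaf has no sub- or quotient object supported on a proper closed subset, so the two agree. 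To make the argument equivariantly clean, one can note that all objects in sight are $P_{m,n}\times P_{n,m}$-equivariant and the forgetful functor detects isomorphisms of $\IC$ sheaves, so it suffices to check the identity after applying $\For$.

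The main obstacle is purely bookkeeping rather than conceptual: one must make sure the numerology of the normalization shift is consistent — that $\dim\widetilde{\cO_i} = \dim\overline{\cO_i}$ really equals $\dim G_{m+n} - (nm-i^2)$, so that the shift $\langle nm - i^2\rangle$ is the one that makes $\pi_{i*}\uk_{\widetilde{\cO_i}}$ land in the same normalization as the $\IC(\overline{\cO_i})$ defined in the text (pure of weight $0$, restricting to $\uk_{\cO_i}\langle nm-i^2\rangle$ on $\cO_i$). Once that is pinned down, smallness does all the work and there is nothing further to prove.
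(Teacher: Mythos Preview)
Your argument is correct and is essentially the paper's own proof: smallness of $\pi_i$ forces $\pi_{i*}\uk_{\widetilde{\cO_i}}$ to be (a shift and twist of) $\IC(\overline{\cO_i})$, and since $\pi_i$ is an isomorphism over $\cO_i$ the restriction there is $\uk_{\cO_i}\langle nm-i^2\rangle$, matching the chosen normalization. One small bookkeeping slip: with this normalization $\IC(\overline{\cO_i})$ is only \emph{shifted} perverse (the shift $nm-i^2$ is not $\dim\cO_i$, since $\codim_{G_{m+n}}\cO_i=i^2$), so your claim that $\uk_{\widetilde{\cO_i}}\langle nm-i^2\rangle$ is perverse should be weakened accordingly; this does not affect the argument.
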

\begin{proof}
  Proposition \ref{small} implies that $\pi_{i*}
  \uk_{\widetilde{\cO_i}}$ is a shift and twist of $\IC(\overline{\cO_i})$, since
  pushforward by a small resolution sends the constant sheaf to a shift
  of the intersection cohomology sheaf on the target.
The restriction of $\pi_{i*} \uk_{\widetilde{\cO_i}}\langle nm-i^2 \rangle$ to 
$\cO_i$ is isomorphic to $\uk_{\cO_i} \langle nm-i^2 \rangle$, which is our choice of normalization.
\end{proof}

Given sheaves $\F, \G \in D^b_G(X)$ let us write
\[
\Hom^{\bullet}(\F, \G) := \bigoplus_{m} \Hom(\F, \G[m]).
\]
This is a graded vector space.

\begin{prop} \label{prop-spectral}
In $D^b_{P_{m,n} \times P_{n,m}}(G)$ we have an isomorphism
\[
\Hom^{\bullet}(\IC(\cO_i), \IC(\cO_{i^{\prime}})) \cong \bigoplus_{j} \Hom^{\bullet}( f_j^! \IC(\cO_i), f_j^* \IC(\cO_{i^{\prime}})).
\]
\end{prop}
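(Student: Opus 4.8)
The plan is to establish the isomorphism by a general stratification argument: given any variety $X$ stratified by locally closed pieces $\cO_j$ with inclusions $f_j$, and two objects $\F = \IC(\cO_i)$, $\G = \IC(\cO_{i'})$, one wants to show that $\RHOM(\F,\G)$ ``splits as a sum over strata.'' The key structural input is that the $\IC$ sheaves here are pure of weight $0$ (by our normalization), and — crucially — that on each stratum the relevant cohomology is concentrated in even degrees with Frobenius eigenvalues of a fixed parity, so that the spectral sequence computing $\Hom^\bullet$ from the stratification degenerates. Concretely, I would filter $\G$ by the standard ``stratification filtration'': letting $U_k = \bigcup_{j \le k}\cO_j$ be the open union of the first few strata, restriction–corestriction triangles $f_{k!}f_k^! \G \to \G|_{U_k} \to (\text{stuff supported on }U_{k-1})$ assemble into a spectral sequence with $E_1$-term $\bigoplus_j \Hom^\bullet(f_j^!\F, f_j^*\G) = \bigoplus_j \Hom^\bullet(\F|_{\cO_j}, \G|_{\cO_j})$ (using the adjunction $\Hom(\F, f_{j*}f_j^*\G) = \Hom(f_j^*\F, f_j^*\G)$ together with $f_j^! = f_j^*$ up to a shift on a smooth stratum, since both $\F$ and $\G$ are locally constant there) converging to $\Hom^\bullet(\F,\G)$.

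The main step is then the degeneration of this spectral sequence. Here I would use the function–sheaf dictionary philosophy together with purity: each term $\Hom^\bullet(f_j^!\IC(\cO_i), f_j^*\IC(\cO_{i'}))$ can be computed explicitly, because on the stratum $\cO_j$ both $\IC$-sheaves restrict to (shifts/twists of) constant sheaves, so this graded Hom is the cohomology of the stratum $\cO_j$ — a product of a general linear group and a Grassmannian-type piece — which is pure and concentrated so that all contributions to a fixed total degree $m$ in $\bigoplus_j \Hom^\bullet(f_j^!\F,f_j^*\G[m])$ have Frobenius weight determined by $m$ (in fact weight $= m$, so everything is ``of Hodge–Tate type with matching parity''). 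Since all differentials $d_r$ in the spectral sequence strictly preserve weight (Corollary following the chromatographic discussion) but necessarily shift the relevant bookkeeping index, they must vanish, giving degeneration at $E_1$ and hence the claimed direct sum decomposition. An alternative and cleaner route: observe that the resolution $\pi_i : \widetilde{\cO_i} \to \overline{\cO_i}$ of Proposition~\ref{small} being small means $\IC(\overline{\cO_i})$ is a direct summand — after extension of scalars and by Gabber's theorem (Theorem~\ref{thm-gabber}) applied to the pure pushforward — of something whose stalks and costalks are manifestly even and pure, and then semisimplicity of the category of pure complexes forces the $\Hom$-complex to split.

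The hard part will be justifying the degeneration rigorously rather than heuristically — i.e., pinning down that the weights appearing in $f_j^!\IC(\cO_i)$ and $f_j^*\IC(\cO_{i'})$ are such that no nonzero differential is possible. This requires computing, for the closed embeddings and the small resolutions at hand, the precise stalk and costalk weights of the $\IC$-sheaves along each orbit $\cO_j$; the parity/purity miracle that makes this work is special to the geometry here (Grassmannian fibers, even-dimensional, Schubert-like stratification), and is exactly what Proposition~\ref{small} and its corollary are setting up. Once that parity statement is in hand, the spectral sequence argument is formal: $E_1 = E_\infty$, the extension problem is trivial because everything is a graded vector space with the grading matching the weight, and one reads off $\Hom^\bullet(\IC(\cO_i),\IC(\cO_{i'})) \cong \bigoplus_j \Hom^\bullet(f_j^!\IC(\cO_i), f_j^*\IC(\cO_{i'}))$ as graded vector spaces, compatibly with the $P_{m,n}\times P_{n,m}$-equivariant structure since the whole construction is equivariant.
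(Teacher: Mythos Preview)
The paper's own proof is a two-line reduction: the quotient equivalence \eqref{quot-equiv} identifies $D^b_{P_{m,n}\times P_{n,m}}(G_{m+n})$ with an equivariant derived category on the partial flag variety $P_{m,n}\backslash G_{m+n}$, and there the statement is exactly \cite[Theorem~3.4.1]{BGS}. Your approach instead reproves that theorem in place: set up the stratification spectral sequence and argue degeneration from pointwise purity of the $\IC$ sheaves. This is precisely how \cite{BGS} proceeds, so you are unpacking the citation rather than taking a genuinely different route. The gain is self-containment --- one sees directly that the small resolutions $\pi_i$ of Proposition~\ref{small} supply the needed parity input --- at the cost of redoing a standard argument.

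One real slip to fix: your parenthetical ``$f_j^! = f_j^*$ up to a shift on a smooth stratum, since both $\F$ and $\G$ are locally constant there'' is false. For $j>i$ the restriction $f_j^*\IC(\overline{\cO_i})$ is not a shifted local system but a complex with several cohomology sheaves (the Grassmannian stalks of Corollary~\ref{IC=res}), and $f_j^!$ and $f_j^*$ genuinely differ on it. Relatedly, the natural stratification spectral sequence (filter $\G$ by $i_{Z*}i_Z^!\G$ over closed unions of strata $Z$, or equivalently filter $\F$ by $!$-extensions from open unions) has $E_1$-term $\bigoplus_j\Hom^\bullet(f_j^*\F,f_j^!\G)$, not $\bigoplus_j\Hom^\bullet(f_j^!\F,f_j^*\G)$; matching the stated form requires Verdier self-duality of the $\IC$ sheaves. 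Once that bookkeeping is straightened out your degeneration argument is sound: stalks and costalks of each $\IC(\overline{\cO_i})$ along each $\cO_j$ are pure with weight equal to cohomological degree (from the small resolution), the equivariant cohomology of each orbit is likewise pure, and the resulting parity constraint kills every $d_r$.
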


\begin{proof} 
For flag varieties this is \cite[Theorem 3.4.1]{BGS}. One may reduce to this situation using the quotient equivalence.
\end{proof}
 
\subsection{} Our aim in this section is to calculate the weight
filtration on the sheaves associated to positive and negative
crossings. We set $[n]_q = 1 + q + \dots + q^{n-1}$, $[n]_q ! = [n]_q [n-1]_q \dots [1]_q$ and
\[
\qquad \qbinom{j}{i}_q = \frac{[j]_q}{[j-i]_q! [i]_q! }.
 \]

In order to understand the constituents via the
function-sheaf correspondence discussed in Section \ref{sec:ff}, we
must calculate the trace of the Frobenius on the stalks of
$\IC(\overline{\cO_i})$.  Base change combined with the
Grothendieck-Lefschetz fixed point formula yields
 
\begin{cor} \label{cor:stalks}
If $j > i$ and $x \in \cO_j(\FF_{q^a})$ we have
\begin{equation*}
\Tr(F_{q^a}^*, (\pi_{i*} \uk_{\widetilde{\cO_i}})_x)= \# \Gr^j_i(\FF_{q^a})=\qbinom{j}{i}_{q^a}.
\end{equation*}
\end{cor}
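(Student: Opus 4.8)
The plan is to deduce this corollary from Corollary~\ref{IC=res} together with the Grothendieck--Lefschetz trace formula and proper base change. Recall that $\pi_i : \widetilde{\cO_i} \to \overline{\cO_i}$ is defined over $\FF_q$, and that over a point $x \in \cO_j$ with $j > i$ its fiber is the Grassmannian $\Gr^j_i$ of $i$-planes inside the $j$-dimensional space $V_m \cap xV_n$.

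First I would apply proper base change along the Cartesian square obtained by pulling back $\pi_i$ to the point $\{x\} = \Spec \FF_{q^a} \hookrightarrow \overline{\cO_i}$; this identifies the stalk $(\pi_{i*}\uk_{\widetilde{\cO_i}})_x$ with the hypercohomology $\hc(\pi_i^{-1}(x), \uk)$, i.e.\ with $\hc(\Gr^j_i \otimes \FF, \uk)$, compatibly with the Frobenius action (the point $x$ being $\FF_{q^a}$-rational, $\Fr_{q^a}$ acts on the fiber). Then I would invoke the Grothendieck--Lefschetz fixed point formula: the alternating sum of traces of $F_{q^a}^*$ on $\hc^\bullet$ of the fiber equals the number of $\FF_{q^a}$-points of that fiber, so
\[
\Tr(F_{q^a}^*, (\pi_{i*}\uk_{\widetilde{\cO_i}})_x) = \#\Gr^j_i(\FF_{q^a}).
\]
Finally, the count of $\FF_{q^a}$-rational $i$-planes in a $j$-dimensional $\FF_{q^a}$-vector space is the standard Gaussian binomial evaluation $\qbinom{j}{i}_{q^a}$, which gives the stated formula.

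The only genuinely delicate point is making sure the mixed/Frobenius structures line up correctly: one must check that the proper base change isomorphism is compatible with the canonical Weil structure $F_q^*$ on $\uk_{\widetilde{\cO_i}}$ and on its pushforward, so that the trace being computed really is the trace on the stalk as a Weil sheaf. This is standard (it is part of the formalism recalled in Section~\ref{sec:frobenius-its-action}, following Chapter~5 of \cite{BBD}), so I would simply cite it; the geometry of the fiber being a Grassmannian, and the identification of that fiber as $\Gr^j_i$, is exactly what was established in the proof of Proposition~\ref{small}. Note that since we take the stalks of $\pi_{i*}\uk_{\widetilde{\cO_i}}$ rather than of the normalized $\IC(\overline{\cO_i})$, no shift or Tate twist enters and the answer is literally the point count, with all eigenvalues of Frobenius being powers of $q^a$ (reflecting the fact that $\Gr^j_i$ has cohomology only in even degrees, of Tate type).
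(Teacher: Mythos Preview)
Your argument is correct and matches the paper's own justification essentially verbatim: the paper states that the corollary follows from ``base change combined with the Grothendieck--Lefschetz fixed point formula,'' which is exactly the proper base change plus trace formula argument you give, with the fiber identification $\pi_i^{-1}(x)\cong \Gr^j_i$ taken from the proof of Proposition~\ref{small}. One small remark: Corollary~\ref{IC=res} is not actually needed here (and you correctly do not use it in the body of your argument), since the statement concerns $\pi_{i*}\uk_{\widetilde{\cO_i}}$ directly rather than the normalized $\IC$ sheaf.
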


In the following proposition $W$ denotes the weight filtration:

\begin{prop}\label{pure-constituents} One has isomorphisms:
\begin{align*}
\gr^W_{ - i} j_!\uk_{\cO_0} \langle nm \rangle & \cong \IC(\overline{\cO_i})(i/2) \\
\gr^W_{ i} j_*\uk_{\cO_0} \langle nm \rangle & \cong \IC(\overline{\cO_i})(-i/2)
\end{align*}
\end{prop}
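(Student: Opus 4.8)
The plan is to identify both the sheaf $j_!\uk_{\cO_0}\langle nm\rangle$ and its proposed associated graded via the function-sheaf dictionary of Section~\ref{sec:ff}. First I would observe that $j_!\uk_{\cO_0}$ is shifted perverse (being the extension by zero of a shifted constant sheaf on the open orbit), so it has a weight filtration $W$, and since each graded piece $\gr^W_{-i}$ is pure, by Gabber's theorem (Theorem~\ref{thm-gabber}) each is a direct sum of (shifts and twists of) the $\IC(\ov{\cO_k})$. By Proposition~\ref{funct-sheaf} it suffices to compute the functions $[j_!\uk_{\cO_0}\langle nm\rangle]_a$ on $G_{m+n}(\FF_{q^a})$ and match them against the functions attached to the $\IC(\ov{\cO_k})$, which are recorded (up to twist) in Corollary~\ref{cor:stalks} and the normalization $\IC(\ov{\cO_k})_{|\cO_k}\cong\uk_{\cO_k}\langle nm-k^2\rangle$.

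The computation itself is short. The stalk of $j_!\uk_{\cO_0}$ at a point of $\cO_j$ vanishes for $j>0$ and equals $\fk$ on $\cO_0$; after the twist $\langle nm\rangle$ this gives the function $x\mapsto q^{-anm/2}\cdot[x\in\cO_0]$ on $\FF_{q^a}$-points, where the power of $q$ comes from tracking the Tate twist through $[\cdot]_a$. On the other hand, from Corollary~\ref{cor:stalks} the function of $\IC(\ov{\cO_k})$ at a point of $\cO_j$ is a power of $q^{a/2}$ times $\qbinom{j}{k}_{q^a}$ (with the convention that this is $0$ for $j<k$ and $1$ for $j=k$); the twist $(k/2)$ in the statement shifts this by $q^{-ak/2}$. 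So I would compute $\sum_{k\ge 0} (-1)^{?}[\IC(\ov{\cO_k})(k/2)]_a$ evaluated at a point of $\cO_j$ — but in fact the chromatographic picture tells us the signs: since $\gr^W_{-k}\, j_!\uk_{\cO_0}\langle nm\rangle$ sits in cohomological degree $k$ relative to the perverse normalization, the alternating sum over $k$ with sign $(-1)^k$ should reproduce $[j_!\uk_{\cO_0}\langle nm\rangle]_a$ at each $\cO_j(\FF_{q^a})$. The resulting identity at a point of $\cO_j$ is, after dividing out the common power of $q$, the elementary $q$-binomial identity
\[
\sum_{k=0}^{j} (-1)^k q^{a\binom{k}{2}} \qbinom{j}{k}_{q^a} = [j=0],
\]
which is the standard finite $q$-analogue of $(1-x)^j$ specialized at $x=1$; it holds for $j\ge 1$ and equals $1$ for $j=0$, exactly matching the indicator of $\cO_0$. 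I would verify the bookkeeping of Tate twists makes the powers of $q$ match on the nose, which fixes the twist $(i/2)$ appearing in the statement.

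To upgrade from "the graded pieces have the right class in the Grothendieck group" to "the weight filtration is exactly this," I would invoke the remark following Proposition~\ref{funct-sheaf}: if $[\F]_a=\sum_k[\G_k]_a$ with each $\G_k$ semisimple and pure of weight $-k$, then $\gr^W_{-k}\F$ is the largest summand of $\bigoplus\G_k$ of that weight, hence $\gr^W_{-k}\F\cong\G_k$. Here the $\G_k=\IC(\ov{\cO_k})(k/2)$ are pairwise of distinct weights (the $\IC(\ov{\cO_k})$ being pure of weight $0$ by the chosen normalization, twisted to weight $-k$), so the matching is forced and $\gr^W_{-i}\,j_!\uk_{\cO_0}\langle nm\rangle\cong\IC(\ov{\cO_i})(i/2)$. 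The second isomorphism follows either by the identical argument with $j_*$ in place of $j_!$ (the stalks of $j_*\uk_{\cO_0}$ being computed by Corollary~\ref{cor:stalks} directly, since $j_*\uk_{\cO_0}=\pi_{0*}\uk_{\wt{\cO_0}}$ up to the issue that $\wt{\cO_0}=\cO_0$ — more honestly one uses that $j_*\uk_{\cO_0}$ and $\IC$'s stalk functions are governed by the same Kazhdan–Lusztig-type count), or most cleanly by applying Verdier duality $\D$ to the first isomorphism: $\D$ exchanges $j_!$ and $j_*$, fixes each self-dual $\IC(\ov{\cO_i})$, sends $\langle nm\rangle$ to $\langle -nm\rangle$ and $(i/2)$ to $(-i/2)$, and reverses the weight filtration, turning $\gr^W_{-i}$ into $\gr^W_{i}$; combined with the self-duality $\D\,\uk_{\cO_0}\langle nm\rangle\cong\uk_{\cO_0}\langle nm\rangle$ up to a global twist one extracts the $j_*$ statement. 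The main obstacle is purely the sign/twist bookkeeping: getting the alternating signs in the chromatographic complex to line up with $(-1)^k q^{a\binom{k}{2}}$ and confirming the half-Tate twists track correctly through $[\cdot]_a$, but this is the kind of finite $q$-identity that admits a one-line proof, so I do not expect any genuine difficulty.
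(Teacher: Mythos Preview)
Your approach is essentially identical to the paper's: reduce to the function-sheaf dictionary, verify the $q$-binomial identity $\sum_{k=0}^{j}(-1)^k q^{\binom{k}{2}}\qbinom{j}{k}_q=\delta_{j0}$, and deduce the $j_*$ case by Verdier duality. One small correction on the bookkeeping: the alternating sign $(-1)^k$ does \emph{not} come from ``the chromatographic picture'' or from the weight filtration---the identity $[\F]_a=\sum_k[\gr^W_k\F]_a$ holds with no signs, since $[\cdot]_a$ is additive on short exact sequences; the factor $(-1)^k$ arises instead from the cohomological shift $[nm-k^2]$ built into the normalization $\IC(\ov{\cO_k})_{|\cO_k}\cong\uk\langle nm-k^2\rangle$, via $(-1)^{nm-k^2}/(-1)^{nm}=(-1)^k$.
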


\begin{proof} Because taking weight filtrations commutes with
  forgetting equivariance, it is enough to handle the non-equivariant
  case. Note also that $\IC(\cO_i)(i/2)$ is pure of weight $-i$. Thus,
  by the remarks in Section \ref{sec:ff}, the first statement of the
  proposition follows from the equality of the functions
\[
[j_!\uk_{\cO_0} \langle nm \rangle]_{q^a} = \sum_i [\IC(\cO_i)(i/2)]_{q^a} \]
for all $a \ge 1$. Evaluating at a point $x \in \cO_j(\FF_{q^a})$ we need to verify
\[
(-1)^{nm/2} \delta_{0j} q^{-anm/2}
= \sum_{0 \le i \le j} (-1)^{nm-i^2}q^{a(i^2 - nm - i)/2} \qbinom ji_{q^a} \]
or equivalently
\[
\delta_{0j} = \sum_{0 \le i \le j}(-1)^iq^{i(i-1)/2}\qbinom ji_q
\]
which is a standard identity on $q$-binomial coefficients. The second
statement follows from the first by Verdier duality.
\end{proof}

\begin{prop}\label{one-d}
We have equalities
  \begin{equation*}
\displaystyle{\dim
  \operatorname{Ext}^1\!\left(\mathbf{IC}(\cO_i),\mathbf{IC}(\cO_{i+
    1})\right)=\dim
  \operatorname{Ext}^1\!\left(\mathbf{IC}(\cO_{i+1}),\mathbf{IC}(\cO_{i})\right)=1.}
\end{equation*}

\end{prop}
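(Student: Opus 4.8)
The plan is to reduce the computation of these Ext-groups to a concrete calculation using Proposition~\ref{prop-spectral}. Applying that isomorphism with $i'=i+1$, we have
\[
\Hom^\bullet(\IC(\cO_i),\IC(\cO_{i+1}))\cong\bigoplus_j\Hom^\bullet(f_j^!\IC(\cO_i),f_j^*\IC(\cO_{i+1})),
\]
so the task becomes understanding the stalks $f_j^*\IC(\cO_{i+1})$ and costalks $f_j^!\IC(\cO_i)$ along each orbit $\cO_j$, together with the local systems involved (which are trivial, since all the strata are simply connected: each $\cO_j$ fibers over a product of partial flag varieties and $GL$'s with connected unipotent-type fibers). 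Since $\IC(\cO_i)$ is supported on $\overline{\cO_i}$ and $\IC(\cO_{i+1})$ on $\overline{\cO_{i+1}}$, and $\overline{\cO_{i+1}}\subset\overline{\cO_i}$ with $\cO_{i+1}$ of codimension $(i+1)^2-i^2=2i+1$ inside $\overline{\cO_i}$, the only orbits $\cO_j$ contributing to the sum are those with $j\ge i+1$; and because we only want $\operatorname{Ext}^1$, the relevant contribution is expected to come entirely from $\cO_{i+1}$ itself, the open orbit in the common support.

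The key steps, in order: (i) use Corollary~\ref{cor:stalks} together with the function-sheaf dictionary — exactly as in the proof of Proposition~\ref{pure-constituents} — to identify the stalk of $\IC(\overline{\cO_{i+1}})$ restricted to $\cO_j$ and, via the normalization $\IC(\overline{\cO_{i+1}})_{|\cO_{i+1}}\cong\uk_{\cO_{i+1}}\langle nm-(i+1)^2\rangle$ and the smallness of $\pi_{i+1}$, read off the stalk cohomology in low degrees; (ii) dualize to get the costalk $f_j^!\IC(\overline{\cO_i})$, using that $\IC(\overline{\cO_i})$ is pure and self-dual up to the twist built into our normalization; (iii) restrict attention to the orbit $\cO_{i+1}$, where $f_{i+1}^*\IC(\cO_{i+1})=\uk_{\cO_{i+1}}\langle nm-(i+1)^2\rangle$ and $f_{i+1}^!\IC(\cO_i)$ is the costalk of $\IC(\overline{\cO_i})$ along the codimension-$(2i+1)$ subvariety $\cO_{i+1}$, which by the IC support/cosupport conditions is concentrated in a single cohomological degree and — because $\cO_{i+1}$ is not too deep relative to the fiber dimensions of $\pi_i$ — contributes a one-dimensional $\Hom^\bullet$ in internal degree $1$; (iv) check that no orbit $\cO_j$ with $j>i+1$ contributes in the relevant degree, by comparing the codimension bounds from smallness of $\pi_i$ and $\pi_{i+1}$ (this is the same numerology as in Proposition~\ref{small}). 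Finally, the second equality, $\dim\operatorname{Ext}^1(\IC(\cO_{i+1}),\IC(\cO_i))=1$, follows from the first by Verdier duality, just as in Proposition~\ref{pure-constituents}.

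I expect the main obstacle to be step (iii): pinning down the precise degree in which the costalk $f_{i+1}^!\IC(\overline{\cO_i})$ lives, and confirming it matches the stalk degree of $\IC(\overline{\cO_{i+1}})$ along $\cO_{i+1}$ so that the graded $\Hom$ is genuinely one-dimensional (rather than zero, or two-dimensional). This is really a local computation on the small resolution $\pi_i:\widetilde{\cO_i}\to\overline{\cO_i}$: over a point of $\cO_{i+1}$ the fiber is $\Gr^{i+1}_i\cong\P^i$, so one must extract from $H^*(\P^i)$ exactly the part that survives the $!$-restriction, and the strict inequality $2i(i+1-i)=2i<2i+1=\codim_{\overline{\cO_i}}\cO_{i+1}$ from smallness is what forces the answer to be concentrated and one-dimensional. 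Alternatively — and this may be cleaner — one can observe that $\operatorname{Ext}^1$ between neighbouring IC-sheaves in this setting records an edge in the relevant quiver, and appeal directly to Proposition~\ref{pure-constituents}: the weight filtration on $j_!\uk_{\cO_0}\langle nm\rangle$ has successive subquotients $\IC(\overline{\cO_i})(i/2)$ with exactly one nonzero extension class between consecutive layers (since the complex is indecomposable and its cohomology is concentrated), which forces $\dim\operatorname{Ext}^1(\IC(\cO_i),\IC(\cO_{i+1}))\ge 1$; combined with the upper bound from the $\Hom^\bullet$ computation above, equality follows.
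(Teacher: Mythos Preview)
Your proposal is correct and follows essentially the same route as the paper: reduce the second equality to the first by Verdier self-duality, apply Proposition~\ref{prop-spectral} to decompose $\Hom^\bullet$ over the strata, and then observe that only the stratum $\cO_{i+1}$ can contribute in degree $1$, where the computation boils down to $H^{2i}(\pi_i^{-1}(x))\cong H^{2i}(\P^i)$ being one-dimensional. Your write-up is more explicit than the paper's (which simply asserts the identification with $H^{2i}(\P^i)$), and your alternative lower-bound argument via the indecomposability of $j_!\uk_{\cO_0}\langle nm\rangle$ is valid but unnecessary once the direct computation is done.
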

\begin{proof}
  By the Verdier self-duality of $\mathbf{IC}$ sheaves, we
  have an equality of dimensions $$\dim
  \operatorname{Ext}^1(\mathbf{IC}(\cO_i),\mathbf{IC}(\cO_{i+
    1}))=\dim
  \operatorname{Ext}^1(\mathbf{IC}(\cO_{i+1}),\mathbf{IC}(\cO_{i})),$$
so we need only give a proof for one.

Using Proposition \ref{prop-spectral}, and remembering that 
\[ 
\dim
  \operatorname{Ext}^1(\mathbf{IC}(\cO_i),\mathbf{IC}(\cO_{i+
    1}))
= \dim
  \Hom(\mathbf{IC}(\cO_i),\mathbf{IC}(\cO_{i+
    1})[1])
\]
one may identify the above space with $H^{2i}(\pi_i^{-1}(x))$
where $x \in \cO_{i+1}$. But $\pi_i^{-1}(x) \cong \P^i$ and so this space is of dimension 1 as claimed.
\end{proof}

\excise{
Recall that $f_j$ denotes the inclusion of $\cO_j$ in $G_{n+m}$. Let us write
\[ f_j^* \IC(\overline{\cO_i}) \cong g_{j,i} \cdot \uk_{\cO_j}[nm-j^2] \quad \text{where } p_{j,j} \in \Z[v,v^{-1}].\]
We have that $g_{j,i}$ is zero if $j < i$, is $1$ if $j = i$ and lies in $v^{-1}\Z[v^{-1}]$ if $j > i$. Self-duality gives us an isomorphism
\[ f_j^! \IC(\overline{\cO_i}) \cong \overline{g_{j,i}} \cdot \uk_{\cO_j}[nm-j^2]. \]

Proposition \ref{prop-spectral} then gives us an isomorphism of graded vector spaces
\begin{equation} \label{eq:grsum}
\Hom^{\bullet}(\IC(\overline{\cO_i}), \IC(\overline{\cO_{i^{\prime}}})
\cong \bigoplus_j g_{j,i} g_{j,i^{\prime}} \cdot H^{\bullet}_{P_{m,n} \times P_{n,m}}( \cO_j ). \end{equation}
Recall that we want to calculate
\[ 
\dim
  \operatorname{Ext}^1(\mathbf{IC}(\cO_i),\mathbf{IC}(\cO_{i+
    1}))
= \dim
  \Hom(\mathbf{IC}(\cO_i),\mathbf{IC}(\cO_{i+
    1}[1]))
\]
which is the graded piece of degree 1 in \eqref{eq:grsum} with $i^{\prime} = i+1$. It follows that the above dimension is the coefficient of $v^{-1}$ in $g_{i+1,i}$.

By Proposition \ref{small} we have
\begin{align*}
f_{i+1}^* \IC(\cO_i) & = H^{\bullet}(\P^i) \otimes \uk_{\cO_{i+1}}[nm-i^2] \\
& = H^{\bullet}(\P^i)[2i+1] \otimes \uk_{\cO_{i+1}}[nm-(i+1)^2]. \end{align*}
and so $g_{i+1,i} = v^{-1} + v^{-3} + \dots + v^{-2i-1}$ and the result follows. \end{proof}}

\begin{cor}
  The local chromatographic complex of $j_!\uk_{\cO_0} \langle nm \rangle$ is the unique complex of the form
  \begin{equation*}
    0\to \IC(\cO_0)\to\IC(\cO_1)\langle 1 \rangle \to \cdots \to \IC(\cO_i)\langle i \rangle \to \cdots
  \end{equation*}
  where all differentials are non-zero.  Similarly, that for $j_*\uk_{\cO_0} \langle nm \rangle$, is the unique complex of the form
   \begin{equation*}
     \cdots \to \IC(\cO_i)\langle -i \rangle \to \cdots\to \IC(\cO_1)\langle -1 \rangle \to\IC(\cO_0)\to 0
  \end{equation*}
  also where all differentials are non-zero.
\end{cor}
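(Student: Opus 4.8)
The plan is to build the local chromatographic complex directly from the weight filtration computed in Proposition~\ref{pure-constituents}, and then to pin down the differentials using the $\operatorname{Ext}^1$ computation of Proposition~\ref{one-d} together with a rigidity argument. First I would record what Proposition~\ref{pure-constituents} gives: the graded pieces of the weight filtration on $j_!\uk_{\cO_0}\langle nm\rangle$ are $\gr^W_{-i} = \IC(\overline{\cO_i})(i/2)$, so by Definition~\ref{chr-ss-def} the local chromatographic complex has $i$th term $\gr^W_{-i}[-i] = \IC(\overline{\cO_i})(i/2)[-i] = \IC(\overline{\cO_i})\langle i\rangle$ (using $\langle d\rangle = [d](d/2)$ and the fact that $\IC(\overline{\cO_i})$ is pure of weight $0$), placed in the appropriate homological degree. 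This already establishes that the complex has exactly the claimed shape
\[
0\to \IC(\cO_0)\to\IC(\cO_1)\langle 1\rangle\to\cdots\to\IC(\cO_i)\langle i\rangle\to\cdots;
\]
the analogous statement for $j_*\uk_{\cO_0}\langle nm\rangle$ follows by Verdier duality, exactly as in the proof of Proposition~\ref{pure-constituents}.

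The substance is the non-vanishing of all the differentials. The differential from the $i$th term to the $(i+1)$st is a class in $\Hom(\IC(\overline{\cO_i})\langle i\rangle, \IC(\overline{\cO_{i+1}})\langle i+1\rangle[1])$; after untwisting this lives in a space of dimension equal to $\dim\operatorname{Ext}^1(\IC(\cO_i),\IC(\cO_{i+1}))$, which Proposition~\ref{one-d} tells us is exactly $1$. So each differential is either zero or (up to scalar) the unique nonzero map. I would argue it cannot be zero as follows: if some differential $d_i$ vanished, the local chromatographic complex would split as a direct sum of two subcomplexes (one supported in degrees $\le i$, one in degrees $> i$), and hence so would its image under $\hc(-)$, i.e. the $E_1$-page would be a direct sum; but then by the degeneration/convergence statement for the chromatographic spectral sequence, the weight filtration on $j_!\uk_{\cO_0}\langle nm\rangle$ would be a nontrivial direct sum of the corresponding sub- and quotient-filtrations, contradicting the fact that $j_!\uk_{\cO_0}\langle nm\rangle$ is indecomposable (it is a shift/twist of $j_!$ of the constant sheaf on the open orbit, whose stalk at a generic point of $\cO_0$ is one-dimensional, so its endomorphism ring is local). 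An alternative, more hands-on route: by Gabber's theorem (Theorem~\ref{thm-gabber}) the complex $\cdots\to\gr^W_{-i}\to\cdots$ is, after forgetting the differentials, a semisimple object, and the chromatographic differentials are precisely the connecting maps in the weight filtration; vanishing of one of them would force $W_{-i-1}$ to be a direct summand of $W_{-i}$, again contradicting indecomposability.

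Finally, uniqueness: any complex of the stated form with all differentials nonzero is determined up to isomorphism because each $\Hom$-space between consecutive terms is one-dimensional (Proposition~\ref{one-d}), so any two choices of nonzero differential differ by rescaling each $\IC(\overline{\cO_i})\langle i\rangle$ by a nonzero scalar, which is an isomorphism of complexes; here I should also note that $\Hom$ between non-adjacent terms $\IC(\cO_i)$ and $\IC(\cO_{i'})$ in the relevant degree contributes no freedom, since for a complex the composites of consecutive differentials must vanish and the only datum is the individual arrows. I expect the main obstacle to be the non-vanishing step — making rigorous the passage ``$d_i = 0$ implies the perverse sheaf decomposes.'' The cleanest formulation is via indecomposability of $j_{!}\uk_{\cO_0}$ in the perverse category together with the observation that the chromatographic complex of a decomposable object is the direct sum of the chromatographic complexes of its summands; one then has to check that a splitting of the chromatographic complex genuinely lifts to a splitting of the filtered object, which is where Beilinson's equivalence $D^b(X)\simeq D^b(\Perv(X))$ and the functoriality of the weight filtration are used.
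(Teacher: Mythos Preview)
Your approach is essentially the same as the paper's: identify the terms via Proposition~\ref{pure-constituents}, use Proposition~\ref{one-d} to reduce the isomorphism type of the complex to the question of which differentials vanish, and then invoke indecomposability of $j_!\uk_{\cO_0}$ and $j_*\uk_{\cO_0}$ to force all differentials to be nonzero. The paper's proof is in fact terser than yours on the non-vanishing step---it simply asserts it from indecomposability without further justification---so your flagging of this as the delicate point and your sketch of how to pass from a vanishing differential to a splitting is an elaboration of, rather than a deviation from, the paper's argument.
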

\begin{remark}
  This corollary shows that this chromatographic complex categorifies
  the MOY expansion of a crossing in terms of trivalent graphs, $\IC(\cO_i)$ corresponding to the MOY graph
  \begin{equation*} 
  \xy 0;/r.5pc/: 
(-6,2)="A"; (6,2)="B";(-4,-2)="C";(4,-2)="D";
 (-15,5); "A"  **\dir{-};  ?(.5)*\dir{>};  ?(.5)+(0,1.5)*{\scriptstyle m};  
 "A";"B" **\dir{-}; ?(.5)*\dir{>}; ?(.5)+(0,1.5)*{\scriptstyle i}; 
 "B";(15,5) **\dir{-}; ?(.5)*\dir{>};  ?(.5)+(0,1.5)*{\scriptstyle n}; 
 (-15,-5);"C"  **\dir{-}; ?(.5)*\dir{>}; ?(.5)+(0,-1.5)*{\scriptstyle n};
"C";"D" **\dir{-}; ?(.5)*\dir{>}; ?(.5)+(0,-1.5)*{\scriptstyle n+m-i}; 
"D";(15,-5) **\dir{-}; ?(.5)*\dir{>};?  ?(.5)+(0,-1.5)*{\scriptstyle m}; 
 "A";"C"  **\dir{-}; ?(.5)*\dir{>};  ?(.55)+(-2.5,0)*{\scriptstyle m-i}; 
 "D";"B"  **\dir{-}; ?(.5)*\dir{>}; ?(.5)+(2,0)*{\scriptstyle n-i};
\endxy
  \end{equation*}
\end{remark}
\begin{proof}
  The terms in the complex are determined by Proposition~\ref{pure-constituents}, and Proposition~\ref{one-d} implies that the isomorphism type of the complex is just determined by which maps are non-zero.
Since $j_!\uk_{\cO_0}$ and $j_*\uk_{\cO_0}$
  are indecomposible, all these maps must be non-zero.
\end{proof}

\section{The invariant via bimodules}
\label{sec:bimodules}

\subsection{The global chromatographic complex of a crossing}
The following lemma gives a description of $\widetilde{\cO_i}$ as a ``Bott-Samelson'' type space:

\begin{lemma}\label{bott-sam-Oi} We have an isomorphism of $P_{m,n} \times P_{n,m}$-equivariant varieties
\begin{equation*}
\widetilde{\cO_i} \cong P_{m,n} \times_{P_{i,m-i,n}} P_{i, m+n-i} \times_{P_{i,n-i,m}} P_{n,m}.
\end{equation*}
\end{lemma}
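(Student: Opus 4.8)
The plan is to realize $\widetilde{\cO_i}$ as an iterated fibre product by unwinding its definition as a moduli space of flags. Recall that
\[
\widetilde{\cO_i} = \{ (W, g) \in \Gr_i^m \times G_{m+n} \mid W \subset V_m \cap gV_n \}.
\]
First I would interpret each of the three factors on the right-hand side of the claimed isomorphism concretely. The group $P_{m,n}$ is the stabilizer of $V_m \subset V_{m+n}$, while $P_{n,m}$ is the stabilizer of $V_n \subset V_{m+n}$ (under the right action we have the flag $0 \subset V_n \subset V_{m+n}$, with the relevant convention for lower edges as fixed in Section~\ref{sec:descr-vari}). The subgroup $P_{i,m-i,n} \subset P_{m,n}$ is the stabilizer of the refined flag $0 \subset V_i \subset V_m \subset V_{m+n}$, and similarly $P_{i,n-i,m} \subset P_{n,m}$ stabilizes $0 \subset V_i \subset V_n \subset V_{m+n}$ (read off the appropriate side), while $P_{i,m+n-i}$ is the stabilizer of $0 \subset V_i \subset V_{m+n}$.

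The key step is the standard bijection for such Bott--Samelson spaces: a point of the double quotient
\[
P_{m,n} \times_{P_{i,m-i,n}} P_{i, m+n-i} \times_{P_{i,n-i,m}} P_{n,m}
\]
is the data of a pair of $i$-dimensional subspaces $W_1, W_2$ with $W_1 \subset V_m$ and (after suitable identification) $W_2 \subset V_n$, together with an element $g \in G_{m+n}$ carrying one configuration to the other, modulo the residual stabilizers. More precisely, I would define the map from $\widetilde{\cO_i}$ to this triple quotient by choosing, given $(W,g)$, an element $p \in P_{m,n}$ with $p V_i = W$ (possible since $W \subset V_m$ and $P_{m,n}$ acts transitively on $i$-dimensional subspaces of $V_m$), an element $p' \in P_{n,m}$ with $g p' V_i = W$ — wait, rather $p'^{-1}g^{-1}W \subset V_n$, so $p'$ can be chosen with $g^{-1}W = p' V_i$ inside $V_n$ — and then the middle term records $p^{-1} g p' \in G_{m+n}$, which by construction fixes $V_i$, i.e.\ lies in $P_{i, m+n-i}$. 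One checks this is well-defined modulo the two copies of the intermediate parabolics and $P_{m,n} \times P_{n,m}$-equivariant, and constructs the inverse by reversing the recipe. The normalization of the double-quotient identification $P_{i,m-i,n} \hookrightarrow P_{m,n}$ versus $\hookrightarrow P_{i,m+n-i}$ needs to be pinned down so the two fibre products glue along the right subgroups; this is bookkeeping with the flag conventions but must be done carefully.

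The main obstacle I anticipate is precisely this bookkeeping: matching the left/right conventions for $P_{m,n}$ and $P_{n,m}$ (which correspond to upper versus lower edges, hence to the two different embeddings $G_i \hookrightarrow G_{i,j} \hookrightarrow G_{i+j}$ and $G_i \hookrightarrow G_{j,i}\hookrightarrow G_{i+j}$) with the condition $W \subset V_m \cap gV_n$, and ensuring that the intermediate parabolic $P_{i,m+n-i}$ really does capture the constraint that both chosen $i$-planes get identified with $V_i$. Once the map is written down correctly, verifying it is an isomorphism is formal — both sides are smooth (the triple quotient is an iterated Grassmannian bundle over $G_{m+n}/\text{something}$, or rather a tower of fibre bundles with Grassmannian-type fibres), and one checks bijectivity on points and smoothness of the differential, or alternatively exhibits the explicit inverse morphism. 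The $P_{m,n} \times P_{n,m}$-equivariance is immediate from the construction since the outer factors are acted on by left and right translation in the obvious way.
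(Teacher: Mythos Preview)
Your proposal is correct and is essentially the same identification the paper makes, just written in the opposite direction: the paper defines the map from the triple quotient by $[g,h,k] \mapsto (gV_i, ghV_n, ghk)$, lands it as a closed embedding into $\Gr_i^m \times \Gr_n^{n+m} \times G_{m+n}$, and observes that the image $\{(W,V,g) : W \subset V,\; V = gV_n\}$ is isomorphic to $\widetilde{\cO_i}$ by forgetting $V$. Going this way avoids the choices of $p$ and $p'$ and the attendant well-definedness checks you flagged, so the bookkeeping you anticipated as the main obstacle simply does not arise.
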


\begin{proof}
The map sending $[g,h,k]$ to $(gV_i, ghV_n, ghk)$ defines a closed embedding
\begin{equation*}
P_{m,n} \times_{P_{i,m-i,n}} P_{i, m+n-i} \times_{P_{i,n-i,m}} P_{n,m} \hookrightarrow \Gr_i^m \times \Gr_n^{n+m} \times G_{m+n}.
\end{equation*}
Its image is given by triples $(W, V, g)$ satisfying $W \subset V$ and
$V = gV_n$ which is isomorphic to $\widetilde{\cO_i}$ under the map
forgetting $V$. \end{proof}

\begin{defi}
  We let $R_{i_1,\dots,i_m}=\fk[x_1,\dots,x_m]^{S_{i_1}\times \cdots \times S_{i_m}}$.
  be the rings of partially symmetric
  functions corresponding to Young subgroups.  We will use without further mention the canonical isomorphism $
  R_{i_1,\dots,i_m}\cong H^*(BG_{i_1,\dots,i_m})$ sending Chern
  classes of tautological bundles to elementary symmetric functions.
\end{defi}

\begin{cor} As $R_{m,n} \otimes R_{n,m}$-modules, we have a natural isomorphism
\begin{align*}
H^*_{P_{m,n} \times
    P_{n,m}}(\widetilde{\cO_i})&\cong M_i \stackrel{\mathrm{def}}{=} R_{i,m-i,n} \otimes_{R_{i, m + n -i}} R_{i,n-i,m}.\\
\hc_{P_{m,n}\times P_{n,m}}(\IC(\cO_i))&\cong M_i(nm-i^2)
\end{align*}  
\end{cor}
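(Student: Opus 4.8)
The plan is to reduce both assertions to a computation of the $(P_{m,n}\times P_{n,m})$-equivariant cohomology of the Bott--Samelson space $\widetilde{\cO_i}$, for which the presentation in Lemma~\ref{bott-sam-Oi} is designed. The first tool is an ``induction'' isomorphism for equivariant cohomology: if $H\subset G$ and $H'\subset G'$ are closed subgroups and $Z$ carries commuting left $H$- and right $H'$-actions, then the $(G\times G')$-variety $G\times_H Z\times_{H'}G'$ (with $G$ acting by left multiplication on the first factor and $G'$ by right multiplication on the last) satisfies a natural isomorphism
\[
H^*_{G\times G'}\big(G\times_H Z\times_{H'}G'\big)\;\cong\;H^*_{H\times H'}(Z).
\]
This holds because the Borel construction $EG\times_G\big(G\times_H(-)\big)$ is canonically $EG\times_H(-)$, and $EG$, being a contractible free $H$-space, is a model for $EH$; the symmetric argument on the right handles $G'$ and $H'$. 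Applying this with $G=P_{m,n}$, $H=P_{i,m-i,n}$, $Z=P_{i,m+n-i}$ (on which $P_{i,m-i,n}$ and $P_{i,n-i,m}$ act by left and right multiplication through their inclusions into $P_{i,m+n-i}$), $H'=P_{i,n-i,m}$ and $G'=P_{n,m}$, Lemma~\ref{bott-sam-Oi} yields
\[
H^*_{P_{m,n}\times P_{n,m}}(\widetilde{\cO_i})\;\cong\;H^*_{P_{i,m-i,n}\times P_{i,n-i,m}}(P_{i,m+n-i}),
\]
and one checks this is compatible with the $R_{m,n}\otimes R_{n,m}$-module structures, the right-hand side being a module through the restriction maps $R_{m,n}\hookrightarrow R_{i,m-i,n}$ and $R_{n,m}\hookrightarrow R_{i,n-i,m}$ induced by $P_{i,m-i,n}\hookrightarrow P_{m,n}$ and $P_{i,n-i,m}\hookrightarrow P_{n,m}$.

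The second tool, which is the geometric source of (singular) Soergel bimodules (see \cite{Wil}, \cite{WW}), is that for a reductive group $K$ with parabolic subgroups $A,B$ acting on $K$ by left and right multiplication there is a natural isomorphism
\[
H^*_{A\times B}(K)\;\cong\;H^*(BA)\otimes_{H^*(BK)}H^*(BB),
\]
obtained via $H^*_{A\times B}(K)\cong H^*_A(K/B)$ together with the equivariant formality of the partial flag variety $K/B$. The $\Tor$-vanishing needed for the right-hand side to be the correct underived tensor product comes down to the elementary fact that $H^*(BP_{\mathbf a})\cong R_{\mathbf a}$ is a free module over $H^*(BP_{\mathbf b})\cong R_{\mathbf b}$ whenever the composition $\mathbf a$ refines $\mathbf b$. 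Taking $K=P_{i,m+n-i}=\GL i\times\GL{m+n-i}$, $A=P_{i,m-i,n}$ and $B=P_{i,n-i,m}$, and invoking the canonical identifications $H^*(BP_{\mathbf a})\cong R_{\mathbf a}$ of the Definition above, this reads
\[
H^*_{P_{i,m-i,n}\times P_{i,n-i,m}}(P_{i,m+n-i})\;\cong\;R_{i,m-i,n}\otimes_{R_{i,m+n-i}}R_{i,n-i,m}\;=\;M_i,
\]
and combining with the previous step gives the first isomorphism. All spaces occurring here --- classifying spaces of products of general linear groups, partial flag varieties and iterated bundles built from these --- are of Tate type, so the $\ell$-adic and topological computations agree and no subtleties with the mixed structure intervene.

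For the second isomorphism I would apply $\hc_{P_{m,n}\times P_{n,m}}(-)$ to the isomorphism $\IC(\cO_i)\cong\pi_{i*}\uk_{\widetilde{\cO_i}}\langle nm-i^2\rangle$ of Corollary~\ref{IC=res}. As $\pi_i$ is proper and $(P_{m,n}\times P_{n,m})$-equivariant, $\hc_{P_{m,n}\times P_{n,m}}(\pi_{i*}\uk_{\widetilde{\cO_i}})\cong H^*_{P_{m,n}\times P_{n,m}}(\widetilde{\cO_i})$, and the twist and shift $\langle nm-i^2\rangle$ passes through, so the first isomorphism gives $\hc_{P_{m,n}\times P_{n,m}}(\IC(\cO_i))\cong M_i(nm-i^2)$. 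I expect the real content to lie in the second tool: pinning down $H^*_{A\times B}(K)\cong H^*(BA)\otimes_{H^*(BK)}H^*(BB)$ with the required $\Tor$-vanishing, and with enough naturality that it respects the bimodule structures inherited from the ambient groups $P_{m,n}$ and $P_{n,m}$ (and, for a fully self-contained account, spelling out which cell paving of $K/B$ exhibits the equivariant formality). The geometric induction isomorphism and the bookkeeping of Tate twists are then formal.
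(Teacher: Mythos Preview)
Your proposal is correct and follows essentially the same route as the paper: the paper deduces the first isomorphism from Lemma~\ref{bott-sam-Oi} together with the main theorem of \cite{BL} (restated as \cite[Theorem~3.3]{WW}), which packages exactly the induction isomorphism and the tensor-product description $H^*_{A\times B}(K)\cong H^*(BA)\otimes_{H^*(BK)}H^*(BB)$ that you have spelled out, and the second isomorphism from Corollary~\ref{IC=res} just as you do. Your write-up simply unpacks the cited black box into its two constituent steps.
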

\begin{proof}
  The first equality follows immediately from the main theorem of \cite{BL} (which
  we restated in the most convenient for our work in our earlier paper
  \cite{WW}[Theorem
  3.3]) and Lemma \ref{bott-sam-Oi}.  The second is a consequence of Corollary~\ref{IC=res}. 
\end{proof}

Now have a global version of Proposition~\ref{one-d}:
\begin{prop}
  The spaces of bimodule maps
  \begin{equation*}
\Hom_{R_{m,n} \otimes R_{n,m}}(M_i(-2i),M_{i-1}) \quad\text{ and }\quad\Hom_{R_{m,n} \otimes R_{n,m}}(M_i(2i),M_{i+1})
\end{equation*}
are trivial in degrees $<1$ and one dimensional in degree $1$.
\end{prop}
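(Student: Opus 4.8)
The plan is to reduce the statement about bimodule maps back to the topological computation we have already carried out for sheaves. Recall that the previous corollary identifies $H^*_{P_{m,n}\times P_{n,m}}(\widetilde{\cO_i})$ with $M_i$, and via Corollary~\ref{IC=res} identifies $\hc_{P_{m,n}\times P_{n,m}}(\IC(\cO_i))$ with $M_i(nm-i^2)$. So the first thing I would do is relate $\Hom_{R_{m,n}\otimes R_{n,m}}(M_i(-2i),M_{i-1})$ to a space of morphisms of equivariant sheaves, or more precisely to a hypercohomology group with a grading shift. The natural tool is formality: the equivariant cohomology $H^*_{P_{m,n}\times P_{n,m}}(\pt)=R_{m,n}\otimes R_{n,m}$ acts on everything, and the relevant $\mathrm{Hom}$-spaces between $\IC$ sheaves should be computed by the corresponding $\mathrm{Hom}$-spaces of modules, up to a shift. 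Concretely, I expect
\[
\Hom^{\bullet}_{D^b_{P_{m,n}\times P_{n,m}}(G_{m+n})}(\IC(\cO_i),\IC(\cO_{i+1}))\cong \Hom^{\bullet}_{R_{m,n}\otimes R_{n,m}}\bigl(M_i(nm-i^2),M_{i+1}(nm-(i+1)^2)\bigr),
\]
with the cohomological grading on the left matching the internal grading on the right. Modulo the overall twist this is exactly $\Hom_{R_{m,n}\otimes R_{n,m}}(M_i(2i),M_{i+1})$ after absorbing $(nm-i^2)-(nm-(i+1)^2)=2i+1$ and correctly accounting for the degree shift by $1$ in the sheaf $\mathrm{Ext}^1$.

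Granting this translation, the proposition follows from what we already know. Proposition~\ref{one-d} tells us that $\operatorname{Ext}^1(\IC(\cO_i),\IC(\cO_{i+1}))$ is one-dimensional, and Proposition~\ref{prop-spectral} together with the stalk computation (the fiber $\pi_i^{-1}(x)\cong\P^i$ and the resulting $H^{2i}(\P^i)$) pins this down as sitting in internal degree exactly $1$ under the correspondence above. The vanishing in degrees $<1$ is the statement that $\Hom$ between these $\IC$ sheaves with no shift, or with negative shift, is zero: this follows because $\IC(\cO_i)$ and $\IC(\cO_{i+1})$ are non-isomorphic simple objects in a category of mixed perverse sheaves (so $\Hom$ in degree $0$ vanishes) and because purity forces all of $\Hom^{\bullet}$ to be concentrated in non-negative degrees — indeed $\operatorname{Ext}^{<0}$ between perverse sheaves is always zero, and the degree-$0$ part between distinct simples is zero. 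One should also note that the $\operatorname{Hom}$ with a grading shift built into the module (the $(-2i)$ or $(2i)$) is genuinely one-dimensional rather than zero: this is where the explicit description $M_i=R_{i,m-i,n}\otimes_{R_{i,m+n-i}}R_{i,n-i,m}$ and the module maps between them come in, and it matches the $v^{-1}$-coefficient computation sketched (now excised) after Proposition~\ref{one-d}.

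The main obstacle, I expect, is making the formality/degree-matching statement precise and rigorous: namely that $\Hom^\bullet$ of these particular equivariant $\IC$ sheaves is computed on the nose by graded $\mathrm{Hom}$ of the associated $R_{m,n}\otimes R_{n,m}$-modules, with cohomological degree going to internal degree. This requires knowing that the relevant category is ``formal'' in the appropriate sense — one wants an analogue of the Beilinson--Ginzburg--Soergel style result (and indeed Proposition~\ref{prop-spectral} already invokes \cite[Theorem 3.4.1]{BGS}), lifted through the quotient equivalence to the $P_{m,n}\times P_{n,m}$-equivariant setting on $G_{m+n}$. Once that bridge is in place, the rest is bookkeeping of grading shifts ($nm-i^2$ versus $nm-(i+1)^2$, the extra $[1]$, and the $(-2i)$ twist) plus citing Propositions~\ref{one-d} and~\ref{prop-spectral}. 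So in the write-up I would state the degree-matching as a lemma (or cite it from \cite{WW}/\cite{BGS}), then derive both the vanishing below degree $1$ and the one-dimensionality in degree $1$ as immediate consequences, with Verdier duality handling the second of the two $\mathrm{Hom}$-spaces exactly as in the proof of Proposition~\ref{one-d}.
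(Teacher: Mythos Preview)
Your approach is essentially the same as the paper's: reduce the bimodule $\Hom$ computation to $\Hom^\bullet(\IC(\cO_i),\IC(\cO_{i\pm 1}))$ via a formality/degree-matching isomorphism, then invoke Proposition~\ref{one-d} (plus the trivial vanishing of $\Hom$ and negative $\operatorname{Ext}$ between non-isomorphic simple perverse sheaves). The only substantive difference is the citation for the bridge: the paper invokes \cite[Theorem 5.4.1]{Wil} --- the second author's result on singular Soergel bimodules --- rather than \cite{BGS} or \cite{WW}, since what is needed is precisely that graded bimodule $\Hom$ between these \emph{singular} Soergel bimodules computes $\Hom^\bullet$ between the corresponding $\IC$ sheaves in the parabolic setting; once that is in hand, the paper simply declares the proposition equivalent to Proposition~\ref{one-d}.
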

\begin{proof}
  This follows from \cite[Theorem 5.4.1]{Wil}.  In fact, combined with Proposition \ref{prop-spectral}, the theorem cited above implies that we have isomorphisms 
  \begin{align*}
  \Hom_{R_{m,n} \otimes R_{n,m}}(M_i(-2i),M_{i-1})&\cong \Hom^\bullet\!\left(\mathbf{IC}(\cO_{i}),\mathbf{IC}(\cO_{i-1})\right)\\ 
  \Hom_{R_{m,n} \otimes R_{n,m}}(M_i(2i),M_{i+1})&\cong \Hom^\bullet\!\left(\mathbf{IC}(\cO_i),\mathbf{IC}(\cO_{i+
      1})\right)
\end{align*}
with grading degree on module maps
  matching the homological grading.  Thus, this result is equivalent to Proposition~\ref{one-d}.
\end{proof}

\begin{cor}
  The global chromatographic complex of $j_!\uk_{\cO_0} \langle nm \rangle$ is the unique complex of the form
  \begin{equation}\label{pos-cross}
   \mathbf{M}^-= \cdots\overset{\partial_{i+1}^-}\longrightarrow M_{i+1}(nm-i(i+1))\overset{\partial_i^-}\longrightarrow M_i(nm-i(i-i))\overset{\partial_{i-1}^-}\longrightarrow \cdots 
  \end{equation}
  where all differentials are non-zero.  Similarly, that for $j_*\uk_{\cO_0} \langle nm \rangle$, is the unique complex of the form
  \begin{equation}\label{neg-cross}
     \mathbf{M}^+= \cdots \overset{\partial_{i-1}^+}\longrightarrow M_i(nm-i(1+i))\overset{\partial_i^+}\longrightarrow  M_{i+1}(nm-(i+1)(i+2))\overset{\partial_{i+1}^+}\longrightarrow\cdots
  \end{equation}
  also where all differentials are non-zero.
\end{cor}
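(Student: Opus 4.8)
The plan is to obtain $\mathbf{M}^-$ and $\mathbf{M}^+$ by applying the functor $\hc_{P_{m,n}\times P_{n,m}}(-)$, term by term and differential by differential, to the \emph{local} chromatographic complexes computed in the previous corollary; this is exactly how the global chromatographic complex is defined (Definition~\ref{chr-ss-def}). First I would recall that the local complex of $j_!\uk_{\cO_0}\langle nm\rangle$ is $0\to\IC(\cO_0)\to\IC(\cO_1)\langle 1\rangle\to\cdots\to\IC(\cO_i)\langle i\rangle\to\cdots$ with every differential non-zero. Applying $\hc_{P_{m,n}\times P_{n,m}}(-)$ and using the corollary identifying $\hc_{P_{m,n}\times P_{n,m}}(\IC(\cO_i))\cong M_i(nm-i^2)$, together with additivity of the shift ($\langle a\rangle\langle b\rangle=\langle a+b\rangle$), the $i$-th term becomes $M_i(nm-i^2)\langle i\rangle\cong M_i\langle nm-i^2+i\rangle = M_i(nm-i(i-1))$, which is precisely the $i$-th term of $\mathbf{M}^-$; likewise, for $j_*\uk_{\cO_0}\langle nm\rangle$ the local terms are $\IC(\cO_i)\langle -i\rangle$, so the global ones are $M_i(nm-i^2)\langle -i\rangle\cong M_i(nm-i(i+1))$, the terms of $\mathbf{M}^+$. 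This accounts for the shape of both complexes, using only the normalization $\IC(\overline{\cO_i})_{|\cO_i}\cong\uk_{\cO_i}\langle nm-i^2\rangle$ fixed earlier.

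It then remains to see that the induced differentials are non-zero and that this pins the complexes down uniquely. Each differential of the global complex is the image under $\hc_{P_{m,n}\times P_{n,m}}(-)$ of the corresponding non-zero differential of the local complex. That local differential is, after removing common shifts, a non-zero class in a space of the form $\Hom^1(\IC(\cO_j),\IC(\cO_{j\pm 1}))$, which by Proposition~\ref{one-d} is one-dimensional; moreover, as recorded in the proof of the preceding proposition (via Proposition~\ref{prop-spectral} and \cite[Theorem 5.4.1]{Wil}), the functor $\hc_{P_{m,n}\times P_{n,m}}(-)$ carries this space isomorphically onto the corresponding one-dimensional degree-one bimodule $\Hom$-space. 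Hence a non-zero local differential is sent to a non-zero global one, and since the target spaces are one-dimensional each differential is determined up to an invertible scalar; absorbing these scalars into automorphisms of the bimodules $M_i$ normalizes the complex, exactly as in the proof of the local statement, which yields the asserted uniqueness.

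The step I would expect to be the most delicate is precisely this last point — in principle $\hc_{P_{m,n}\times P_{n,m}}(-)$ could annihilate a differential — but it does not arise, since this functor is already known, from the preceding proposition, to be an isomorphism on the relevant degree-one $\Hom$-spaces, so no genuinely new input is needed beyond the grading bookkeeping in the twists $\langle\pm i\rangle$, which is routine. Finally, the $\mathbf{M}^+$ statement can alternatively be deduced from the $\mathbf{M}^-$ statement by Verdier duality, just as the second half of Proposition~\ref{pure-constituents} was deduced from its first half.
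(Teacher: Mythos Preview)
Your proposal is correct and follows exactly the route the paper intends: the corollary is stated without proof because it is meant to follow immediately from applying $\hc_{P_{m,n}\times P_{n,m}}(-)$ to the local chromatographic complex, using the identification $\hc_{P_{m,n}\times P_{n,m}}(\IC(\cO_i))\cong M_i(nm-i^2)$ and the preceding proposition (which, via \cite[Theorem 5.4.1]{Wil}, identifies the sheaf $\mathrm{Ext}^1$ with the degree-one bimodule $\Hom$). Your handling of why the differentials remain non-zero after applying hypercohomology is exactly the point the paper is relying on implicitly, and your grading bookkeeping is correct (note the paper's $M_i(nm-i(i-i))$ is evidently a typo for $M_i(nm-i(i-1))$).
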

We note that these are the complexes defined in \cite[\S 8]{MSV}, with slight change in grading shift, since they have the same modules, and there is only one such complex up to isomorphism.

We note that these maps have a geometric origin.  Consider the correspondence
\begin{equation*}
\widetilde{\cO_{i+1,i}}=\{(U,W,g)\in\Gr^n_{i+1}\times\Gr^n_{i}\times
G_{n+m} \,| gV_n\cap V_m\supset U \supset W\}
\end{equation*}

Obviously, we have natural maps 
\begin{equation*}
  \xymatrix{&\widetilde{\cO_{i+1,i}}\ar[dl]_{p^1_i}\ar[dr]^{p^2_i}&\\\widetilde{\cO_{i+1}}&&\widetilde{\cO_{i}}}
\end{equation*}

\begin{prop}Up to scaling, we have equalities
  \begin{equation*}
    \partial_i^-=(p^2_i)_*(p^1_i)^* \hspace{.4in}\partial_i^+=(p^1_i)_*(p^2_i)^*
  \end{equation*}
\end{prop}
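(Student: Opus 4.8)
The plan is to establish the identity $\partial_i^- = (p^2_i)_*(p^1_i)^*$ by combining the three things we already know about this morphism: first, that it is the differential in the global chromatographic complex, which by the preceding corollary is the \emph{unique} (up to scalar) non-zero map $M_{i+1}(nm - i(i+1)) \to M_i(nm - i(i-1))$; second, that the space of such maps is one-dimensional by the global version of Proposition~\ref{one-d}; and third, that the geometric map $(p^2_i)_*(p^1_i)^*$ is manifestly a degree-one map of $R_{m,n}\otimes R_{n,m}$-bimodules between the right modules. Thus the entire content of the proposition is the single assertion: \textbf{$(p^2_i)_*(p^1_i)^*$ is non-zero}. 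Once that is shown, uniqueness forces it to agree with $\partial_i^-$ up to scaling, and the statement for $\partial_i^+$ follows by Verdier duality exactly as in Proposition~\ref{pure-constituents}.

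First I would set up the correspondence diagram at the level of sheaves. The maps $p^1_i, p^2_i$ are proper (they are projective-space bundles, with fibres $\P^i$ and $\P^{n-i-1}$ respectively, coming from choosing a line in $U$ or a hyperplane containing $W$ inside the relevant $(i{+}1)$- or $i$-dimensional spaces), and along both of them the constant sheaf pulls back to a shift-and-twist of an $\IC$ sheaf, since $\widetilde{\cO_{i+1}}$ and $\widetilde{\cO_i}$ are smooth (being iterated Grassmannian bundles, cf. Lemma~\ref{bott-sam-Oi}). Hence $(p^2_i)_*(p^1_i)^*\uk_{\widetilde{\cO_{i+1}}}$, suitably normalized, is a complex built from $\uk_{\widetilde{\cO_i}}$, and pushing forward to $G_{m+n}$ via $\pi_i$ and using Corollary~\ref{IC=res} we get an explicit map $\IC(\overline{\cO_{i+1}}) \to \IC(\overline{\cO_i})[1](\tfrac{1}{2})$ after the correct twist. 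Taking $P_{m,n}\times P_{n,m}$-equivariant hypercohomology and using the corollary identifying $\hc_{P_{m,n}\times P_{n,m}}(\IC(\cO_i))$ with $M_i(nm-i^2)$ turns this into the claimed bimodule map; one checks the grading shift bookkeeping matches $M_{i+1}(nm-i(i+1)) \to M_i(nm-i(i-1))$, which is just arithmetic with the normalizations $\langle\,\cdot\,\rangle$ and the codimension count $i^2$ from Proposition~\ref{small}.

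To see non-vanishity, I would localize to a transverse slice: restrict everything to a point $x \in \cO_{i+1}$, where the stalk computation of Corollary~\ref{cor:stalks} applies. Over such $x$ the fibre of $\pi_{i+1}$ is a point and the fibre of $\pi_i$ is $\P^{i+1}$, while the correspondence $\widetilde{\cO_{i+1,i}}$ restricts to the incidence variety of lines inside (the fixed) $U = V_m \cap xV_n$ — i.e. $\P(U) \cong \P^i$ itself — mapping isomorphically to $\pi_{i+1}^{-1}(x)$ on one side and as the tautological inclusion $\P^i \hookrightarrow \P^{i+1}$ (well, rather the flag incidence) on the other. The composite $(p^2_i)_*(p^1_i)^*$ over $x$ is then the Gysin pushforward along $\P^i \to \P^{i+1}$ composed with restriction, which on cohomology is the non-zero map $H^0(\mathrm{pt}) \to H^{2}(\P^{i+1})$ — precisely the generator of the one-dimensional $\mathrm{Ext}^1$ identified with $H^{2i}(\pi_i^{-1}(x))$ in the proof of Proposition~\ref{one-d} (up to relabelling of which orbit is the source). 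This is manifestly non-zero, so the map of bimodules cannot be zero either.

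The main obstacle I anticipate is purely organizational rather than conceptual: getting the normalizations, shifts $[1]$, and Tate twists $(d/2)$ to line up so that $(p^2_i)_*(p^1_i)^*$ really lands in the correct graded component — there is genuine risk of being off by a shift, and one has to be careful that the two natural ``correspondence'' maps (pull–push one way versus the other) are the ones producing $\partial^-$ versus $\partial^+$ and not accidentally swapped. A secondary point requiring a little care is verifying that $(p^2_i)_*(p^1_i)^*$ descends to (equivalently: is already defined in) the equivariant derived category compatibly with the identifications of the preceding corollary — but this is immediate since all the maps $p^1_i, p^2_i, \pi_i$ are $P_{m,n}\times P_{n,m}$-equivariant, so everything can be carried out equivariantly from the start, and the Bernstein–Lunts computation of $\hc_{P_{m,n}\times P_{n,m}}$ is functorial. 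I would therefore present the argument equivariantly throughout and relegate the slice computation to the non-equivariant setting only at the final non-vanishing step, exactly as Proposition~\ref{one-d} does.
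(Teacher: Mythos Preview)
Your approach is essentially identical to the paper's: both arguments reduce the proposition to the single assertion that $(p^2_i)_*(p^1_i)^*$ is non-zero of the correct degree, and then invoke the one-dimensionality of the Hom space (the global version of Proposition~\ref{one-d}) to conclude. The paper's proof is in fact just the two-sentence version of what you wrote: ``$(p^2_i)_*(p^1_i)^*$ has the expected degree and is non-zero, so it must be $\partial_i^-$; similarly for $\partial_i^+$.'' It does not spell out the non-vanishing argument at all.

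Your localization argument for non-vanishing is a reasonable way to fill in what the paper leaves implicit, but a couple of the fibre computations are off. Over $x\in\cO_{i+1}$ one has $\dim(V_m\cap xV_n)=i+1$, so $\pi_i^{-1}(x)\cong\Gr_i^{i+1}\cong\P^i$, not $\P^{i+1}$; and in the correspondence $\widetilde{\cO_{i+1,i}}$ the map $p^2_i$ is not a $\P^{n-i-1}$-bundle globally (its fibre over $(W,g)$ depends on which stratum $g$ lies in). Neither slip affects the logic: over $x\in\cO_{i+1}$ the correspondence collapses to $\P^i$ mapping isomorphically to $\pi_i^{-1}(x)$ and to the point $\pi_{i+1}^{-1}(x)$, so the pull--push sends the fundamental class to the class of a point in $H^{2i}(\P^i)$, which is exactly the non-zero generator identified in Proposition~\ref{one-d}. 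With those corrections your argument goes through and matches the paper's, only with the details made explicit.
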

\begin{proof}
  We note that $(p^2_i)_*(p^1_i)^*$ has the expected degree and is
  non-zero.  Thus it must be $\partial_i^-$.  Similarly with
  $(p^1_i)_*(p^2_i)$.
\end{proof}

\subsection{Building the global chromatographic complex I: via canopolis}\label{sec:canopolis}

Now, we are faced with the question of how to build the global
chromatographic complex of an arbitrary braid fragment (by which we
mean a tangle which can be completed to a closed braid by planar
algebra operations).

While the operations we describe are nothing complicated or
mysterious, it can be a bit difficult to both be precise and not pile
on unnecessary notation.  In an effort to give an understandable
account for all readers, we give two similar, but slightly different,
expositions of how to build the complex for a knot, one quite
analogous to Khovanov's exposition in \cite{Kho05} using braids and
their closures, and one in the language of planar algebras and
canopolises, in the vein of the work of Bar-Natan \cite{BN} and the
first author \cite{Web07KR}. 

This approach is based around planar diagrams in sense of planar
algebra; a planar diagram is a crossingless tangle diagram in a planar
disk with holes.  A canopolis is a way of formalizing the process of
building up a tangle by gluing smaller tangles into planar diagrams.

Our definition of our geometric invariant can be phrased in this
language. Given a tangle $T$ written as a union of smaller tangles
$T_i$ in a planar diagram $D$, the space $X_T$ has a product
decomposition $X_T\cong\prod_i X_{T_i}$, and $G_T$ is a subgroup
of $\prod_i G_{T_i}$, given by taking the diagonal inside the
factors corresponding to the edges on $T_i$ and $T_j$
identified by $D$.  

That is, the sheaf $\F_L$ can be built from the sheaves corresponding
to crossings by successive applications of exterior product and
restriction of groups.  It is easy to understand how each of these
affects chromatographic complexes, and our desired invariant can be built piece by piece.

Formally, to each oriented colored tangle diagram in a disk with
boundary points $\{p_1,\dots, p_m\}$, we will associate a complex of
modules over $R_{\Pi}=H^*\left(\prod_{i}BG_{p_i}\right)$, where we use
$\Pi$ to denote all the boundary data of the tangle (the points, their
coloring, their orientation).

The association of the category $\K(R_{\Pi}-\modu)$ of complexes up to
homotopy over $R_{\Pi}$ to the boundary data $\Pi$ (with their
colorings) is a canopolis $\K$, where the functor associated to a planar
diagram is an analogue to that used in the canopolis $\EuScript{M}_0$
in \cite{Web07KR}.  The canopolis functor
\begin{equation*}
\tilde\eta:\K(R_{\Pi_1}-\modu)\times \cdots \times \K(R_{\Pi_k}-\modu)\to \K(R_{\Pi_0}-\modu)
\end{equation*}
associated to a planar diagram with outer circle labeled with $\Pi_0$
and $k$ inner circles labeled with $\Pi_1,\dots,\Pi_k$ will be given
by tensoring with a complex of  $R_{\Pi_0}$-$R_{\Pi_1}\otimes\cdots
\otimes R_{\Pi_k}$ bimodules.  We let $R_{\Pi_*}=R_{\Pi_1}\otimes\cdots
\otimes R_{\Pi_k}$

Let $\mc A(\eta)$ be the set of arcs in $\eta$, and let $\al_a,\om_a$
be the tail and head of $a\in \mc A(\eta)$, and let $n_a$ be the
integer $a$ is colored with. Associated to each arc, we associate the
sequence
\begin{equation*}
(e_1({\om_a})-e_1({\al_a}), \dots , e_{n_a}(\om_a)-e_{n_a}(\al_a)),
\end{equation*}
which identifies the classes $e_i\in H^*(BG_n)$ corresponding to the
elementary symmetric polynomials (geometrically, these are the Chern
classes of the tautological bundle on $BG_n$) for the endpoints
connected by the arc. To our diagram, we associate the concatenation
of these sequences.

Let $\kappa(\eta)$ be the Koszul complex over $R_{\Pi_0}\otimes\cdots
\otimes R_{\Pi_k}$ of this concatenated sequence for our diagram
$\eta$, which we think of as a bimodule with the $R_{\Pi_0}$-action on
the left and the $R_{\Pi_*}$ on the right.
\begin{defi}
  The canopolis functor $\tilde \eta$ associated to the diagram $\eta$ is $
\kappa(\eta)\otimes_{R_{\Pi_*}}-$.
\end{defi}

\begin{prop}
  The map sending a tangle $T$ to the global chromatographic complex
  of $\F_T$ is a canopolis map.
\end{prop}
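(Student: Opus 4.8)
The plan is to track the local and (hence) global chromatographic complexes through the two elementary operations out of which $\F_T$ is assembled, and then to match the output with the canopolis functor $\tilde\eta$. Concretely, if $T=\eta(T_1,\dots,T_k)$ then $X_T=\prod_i X_{T_i}$, the sheaf $\F_T=\res^{\prod_i G_{T_i}}_{G_T}\big(\boxtimes_i\F_{T_i}\big)$, and the inclusion $G_T\hookrightarrow\prod_i G_{T_i}$ is the product, over the arcs $a$ of $\eta$, of the diagonal embeddings $G_e\hookrightarrow G_{e_1}\times G_{e_2}$ identifying the edge $e_1$ of $T_i$ meeting $\al_a$ with the edge $e_2$ of $T_j$ meeting $\om_a$ (arcs meeting the outer circle accounting for the $R_{\Pi_0}$-module structure rather than the gluing). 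So it is enough to understand each of exterior product and restriction of equivariance on chromatographic complexes; compatibility with composition of planar diagrams and with identities is then formal.

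For exterior products: an external tensor product of pure sheaves is pure (Frobenius eigenvalues on stalks multiply under Künneth) and $\boxtimes$ is exact, so $\gr^W_p\big(\boxtimes_i\F_{T_i}\big)=\bigoplus_{\sum p_i=p}\boxtimes_i\gr^W_{p_i}\F_{T_i}$, and the boundary maps of the weight triangles — hence the differentials of the local chromatographic complex — form the total complex of the individual ones. Applying equivariant hypercohomology together with the Künneth isomorphism $\hc_{\prod_i G_{T_i}}\big(\prod_i X_{T_i};\boxtimes_i\G_i\big)\cong\bigotimes_i\hc_{G_{T_i}}(X_{T_i};\G_i)$ (an isomorphism since $\fk$ is a field) shows that the global chromatographic complex of $\boxtimes_i\F_{T_i}$ is $\bigotimes_i$ of those of the $\F_{T_i}$, viewed over $\bigotimes_i H^*(BG_{T_i})$; restricting scalars to $R_{\Pi_*}$ this is $\bigotimes_i(\text{complex assigned to }T_i)$. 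For restriction of equivariance: $\res$ commutes with $\For$ and with the Frobenius, hence preserves weight filtrations, so it commutes with passage to the local chromatographic complex. For the global complex one invokes the base-change formula for restriction of equivariant hypercohomology of \cite{WWequ} (cf.\ \cite{WW}): a natural quasi-isomorphism $\hc_{G_T}(X_T;\res\G)\simeq\hc_{\prod_i G_{T_i}}(X_T;\G)\Lotimes_{H^*(B\prod_i G_{T_i})}H^*(BG_T)$. Now $H^*(BG_T)$ arises from $\bigotimes_iH^*(BG_{T_i})$ by identifying, for each arc $a$, the factors $R_{e_1},R_{e_2}$, and the kernel of the multiplication map $R_{e_1}\otimes R_{e_2}\twoheadrightarrow R_e$ is generated by the regular sequence $\big(e_j(\om_a)-e_j(\al_a)\big)_{1\le j\le n_a}$ — regular because $\fk[x,y]^{S_{n_a}\times S_{n_a}}/(e_j(x)-e_j(y))\cong\fk[x]^{S_{n_a}}$ remains Cohen--Macaulay of the expected dimension — so the Koszul complex on the concatenation of these sequences over all arcs, which is exactly $\kappa(\eta)$, freely resolves $H^*(BG_T)$ over $\bigotimes_iH^*(BG_{T_i})$ and computes the derived tensor product. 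Combining this with the previous paragraph and Künneth once more gives
\[
\big(\text{global chromatographic complex of }\F_T\big)\ \simeq\ \kappa(\eta)\otimes_{R_{\Pi_*}}\bigotimes_i\big(\text{global chromatographic complex of }\F_{T_i}\big)\ =\ \tilde\eta(\dots),
\]
which, after the usual passage to the quotient by the freely acting interior subgroup $G_T^\iota$ so as to regard everything as complexes of $R_{\Pi_0}$-modules, is the assertion.

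The point requiring care is that this must be an equivalence in the homotopy category $\K$, not merely an isomorphism of spectral sequences. Here one uses the naturality of the base-change quasi-isomorphism above — so that the boundary maps of the weight triangles, and thus the chromatographic differentials, are carried to the corresponding differentials of $\kappa(\eta)\otimes_{R_{\Pi_*}}(-)$ — together with the homotopy-invariance of the global chromatographic complex (independence, up to homotopy, of the chosen weight filtration) and its invariance under proper pushforward, both established above. The identification of the individual differentials with those of $\tilde\eta$ is then forced: the chromatographic complex of a single crossing was shown to be the unique complex of its prescribed shape with all differentials nonzero, and this rigidity propagates through $\boxtimes$ and $\res$, leaving $\tilde\eta$ of the pieces as the only possibility. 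I expect this last bookkeeping — keeping the chain-level identifications functorial enough to pin down the homotopy type — to be the main technical obstacle, the geometric inputs themselves being essentially formal consequences of the excerpt's earlier results.
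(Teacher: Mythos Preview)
Your proposal is correct and takes essentially the same approach as the paper: the key step is that restricting equivariance from $G_{\om_a}\times G_{\al_a}$ to the diagonal corresponds, on hypercohomology, to tensoring with the Koszul resolution of the diagonal bimodule, which is precisely what the paper invokes from \cite{BL} and \cite[Theorem 3.3]{WW}. The paper's proof is a single sentence plus this citation, so your treatment of K\"unneth for the exterior-product step and of chain-level naturality is elaboration the authors leave implicit; your final ``rigidity'' paragraph is unnecessary, since naturality of the base-change isomorphism already pins down the differentials.
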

\begin{proof}
  We simply need to justify why tensoring with such a Koszul
  resolution (which is a free resolution of the diagonal bimodule for
  $H^*(BG_{p_i})$) is the same as changing $G_T$ to only include the
  diagonal subgroup of $G_{\om_a}\times G_{\al_a}$.  This is one of
  the basic results of \cite{BL} (as we mentioned earlier, this is
  rephrased most conveniently for us in \cite[Theorem 3.3]{WW}).
\end{proof}

\begin{remark}
  We note that this construction at no point used the fact that our
  diagram should be a braid fragment; unfortunately, it is unclear
  whether our construction will be invariant under the oppositely
  oriented Reidemeister II move, as with Khovanov-Rozansky's original
  construction (see, for example, \cite[\S 3]{Web07KR}) though we will
  note that proving invariance under this move for the all 1's
  labeling is sufficient to imply it for all labeling, by the same
  cabling arguments we will use later.
\end{remark}

\subsection{Building the global chromatographic complex II: via bimodules}
\label{sec:build-glob-chrom}

A less flexible, but perhaps more familiar, perspective is to
associate to each braid a complex of bimodules, in a manner similar to
\cite{Kho05} (though the same complex had previously appeared in other
works on geometric representation theory).  In the case where all
labels are 1, our construction will coincide with Khovanov's.

As in Section \ref{sec:braids-sheav-groups}, we let $\be$ be a braid
with $n$ strands, and $\Bn=(i_1,\dots, i_m)$ be the labels of the
top end of the strands (so $\beBn$ is the labeling of the bottom
end).  In that section, we showed the our invariant can also be
described in terms of the chromatographic complex of a sheaf $\fF_\be$
on $G_N$.

This sheaf has the advantage that it can be built from the sheaves for
smaller braids by convolution of sheaves.  However, convolution of
sheaves is a geometric operation which is not always easy to
understand.  Thus, we will give a description of it using tensor
product of bimodules.  Let $F(\be)$ be the $P_\Bn\times
P_{\beBn}$-equivariant global chromatographic complex of $\fF_\be$,
considered as a complex of bimodules over $H^*(BP_{\Bn})$ and
$H^*(BP_{\beBn})$.

\begin{prop}
  We have natural isomorphisms
  \begin{equation*}
    F(\be\be')\cong F(\be)\otimes_{H^*(BP_{\beBn})}F(\be').
  \end{equation*}
\end{prop}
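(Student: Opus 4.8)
The plan is to deduce this from the corresponding statement about sheaves, namely the already-established isomorphism $\fF_{\be}\star\fF_{\be'}\cong\fF_{\be\be'}$, together with the dictionary between convolution of $P_\bullet\times P_\bullet$-equivariant sheaves on $G_N$ and tensor product over $H^*(BP_\bullet)$ of the associated (global chromatographic) complexes of bimodules. Concretely, $F(\be)$ is by definition the global chromatographic complex of $\fF_\be$, so I first need to check that the operation ``take the global chromatographic complex'' intertwines $\star$ on the sheaf side with $\otimes_{H^*(BP_{\beBn})}$ on the bimodule side.

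First I would unwind the definition of $\star$ from the excerpt: $\F_1\star\F_2$ is built from the external product $\F_1\boxtimes\F_2$ by a restriction functor (along the diagonal embedding of $P_{\beBn}$) followed by an equivariant pushforward $\mu_*$ along the multiplication map $\mu:G_N\times G_N\to G_N$. On the level of chromatographic complexes: (i) external product of sheaves corresponds, after taking $P\times P$-equivariant hypercohomology, to the (derived) tensor product over $\fk$ of the two complexes of bimodules — here one needs a Künneth statement in the equivariant mixed setting, which follows from the definition of $D^b_G$ via the Borel construction and the ordinary Künneth theorem; (ii) the restriction to the diagonal $P_{\beBn}\hookrightarrow P_{\beBn}\times P_{\beBn}$ corresponds, by the same basic result of \cite{BL} cited repeatedly in the paper (see \cite[Theorem 3.3]{WW}) — i.e. that equivariant hypercohomology for a diagonal subgroup is computed by $-\otimes_{H^*(B(P\times P))}H^*(BP)$, which because $H^*(BP_{\beBn})$ is polynomial is exactly a tensor product over $H^*(BP_{\beBn})$ realized by a Koszul resolution — to tensoring the two bimodule complexes over $H^*(BP_{\beBn})$; (iii) the proper pushforward $\mu_*$ preserves purity and hence, by the proposition in the excerpt that the global chromatographic complex is preserved up to homotopy by proper pushforward, does not change the chromatographic complex. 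Since every $\IC$-sheaf and every constituent in sight is pure (Gabber, Theorem~\ref{thm-gabber}), the relevant $E_1$/chromatographic complexes split into their pure pieces and these identifications are honest isomorphisms of complexes of bimodules, not merely quasi-isomorphisms.

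Putting these three steps together gives a natural isomorphism of complexes of bimodules
\begin{equation*}
  F(\be)\otimes_{H^*(BP_{\beBn})}F(\be')\;\cong\;\text{(global chromatographic complex of }\fF_\be\star\fF_{\be'}).
\end{equation*}
Finally, applying the already-proved sheaf-level identity $\fF_\be\star\fF_{\be'}\cong\fF_{\be\be'}$ and taking global chromatographic complexes of both sides yields $F(\be)\otimes_{H^*(BP_{\beBn})}F(\be')\cong F(\be\be')$, as desired. Naturality is inherited from naturality of each of the three building blocks (Künneth, the $\mathrm{res}$/Koszul identification, and the pushforward statement).

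The main obstacle, I expect, is step (ii): making precise that restriction along the diagonal subgroup of $P\times P$ corresponds on chromatographic complexes to $\otimes_{H^*(BP)}$, and in particular that this is compatible with the weight filtration so that it passes cleanly to the chromatographic level (one wants the pushforward/restriction to send weight filtrations to weight filtrations, paralleling the discussion already given for $m_*$ in Section~\ref{sec:braids-sheav-groups}). Once one knows the relevant functors preserve purity — which they do, since restriction of groups and proper pushforward both preserve purity — Gabber's theorem makes all the pure constituents semisimple and everything splits, so the identification is forced; the work is bookkeeping with the equivariant formalism of \cite{WWequ} rather than anything genuinely deep.
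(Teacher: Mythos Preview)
Your approach is essentially the paper's own: decompose $\star$ into exterior product, restriction to the diagonal $P_{\beBn}$, and equivariant pushforward along $\mu$, and track what each does to the global chromatographic complex. The paper phrases step (iii) slightly differently---since $G_N/P_{\beBn}$ is projective, the effect of ${}^{P_\Bn\times P_{\bepBn}}_{P_\Bn\times P_{\beBn}\times P_{\bepBn}}\mu_*$ on hypercohomology is simply to forget the middle $H^*(BP_{\beBn})$-action---but this is the same content as your purity/proper-pushforward remark.

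One point you pass over: after step (ii) you have, a priori, the \emph{derived} tensor product $F(\be)\overset{L}{\otimes}_{H^*(BP_{\beBn})}F(\be')$, and you need to say why this agrees with the ordinary tensor product appearing in the statement. The paper handles this by invoking equivariant formality: the simple Schubert-smooth perverse sheaves on the partial flag variety are equivariantly formal, so each term of $F(\be)$ is free as a right $H^*(BP_{\beBn})$-module, and the higher $\Tor$'s vanish. Your appeal to purity and semisimplicity of constituents does not by itself give this freeness; you should add the formality observation.
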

\begin{proof}
  Consider the exterior product $\fF_{\be}\boxtimes\fF_{\be'}$ on $G_N\times G_N$.  The $P_\Bn\times P_{\beBn}\times P_{\beBn} \times
  P_{\bepBn}$-equivariant chromatographic complex of this is
  $F(\be)\otimes_\C F(\be')$.  If we restrict to the diagonal
  $P_{\beBn}$, then this complex is
  $F(\be)\overset{L}\otimes_{H^*(BP_{\beBn})} F(\be')$.  By the
  equivariant formality of all simple, Schubert-smooth perverse
  sheaves on a partial flag variety, $F(\be)$ is free as a right
  module, so it is not necessary to take derived tensor product.

  By the convolution description, we have
  \begin{equation*}
  \fF_{\be'\!\be}\cong {}_{P_\Bn\times P_{\beBn}\times 
  P_{\bepBn}}^{P_{\Bn}\times
  P_{\bepBn}}\mu_*(\fF_{\be,\be'})
\end{equation*}
where
  $\mu:G_N\times G_N\to G_N$. Since $G/P_{\beBn}$ is projective, this map
  simply has the effect of forgetting the $H^*(BP_{\beBn})$ action on each page of the chromatographic spectral sequence.
\end{proof}

Thus, we can construct $F(\be)$ just by knowing the complex
$F(\si_i^{\pm 1})$ for the elementary twists $\si_i^{\pm 1}$.
However, first we must compute the corresponding sheaves.  Given
$\Bn$, we let $Q_j=P_{i_1,\ldots,i_j+i_{j+1},\ldots,i_n}$, and let
$\becircled{Q}_j=Q_j-Q_0$.
\begin{prop}
  We have isomorphisms
  \begin{equation*}
\fF_{\si_i}=j_*\uk_{\becircled Q_i}\langle i_ii_{i+1} \rangle\hspace{.5in}\fF_{\si_i^{-1}}=j_!\uk_{\becircled Q_i}\langle i_ii_{i+1} \rangle,
\end{equation*}
where $j:\becircled
  Q_i\hookrightarrow G_N$ is the obvious inclusion.
\end{prop}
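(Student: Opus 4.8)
The plan is to reduce the statement to the single-crossing analysis already in hand and then to push the resulting sheaf forward under $m$, keeping track of the change of equivariance group.

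First I would unwind the definitions. The braid $\si_i^{\pm 1}$ has a single crossing vertex $v$ (together with, when $n>2$, one marked-point vertex on each through-strand, which contributes only a constant sheaf on the corresponding $\GL{}$-factor and which I suppress from the notation). Recall that $X_{\si_i^{\pm 1}}=\GL{i_i+i_{i+1}}$ and that, from the construction of $\F_L$ in Section~\ref{sec:descr-invar}, the sheaf $\F_{\si_i}$ (resp. $\F_{\si_i^{-1}}$) is $j_*\uk_U\langle i_ii_{i+1}\rangle$ (resp. $j_!\uk_U\langle i_ii_{i+1}\rangle$), where $U=\cO_0=\{g\mid V_{i_i}\cap gV_{i_{i+1}}=0\}$ is the big Bruhat cell. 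The embedding $\iota_v$ identifies $m\colon X_{\si_i^{\pm 1}}=\GL{i_i+i_{i+1}}\hookrightarrow G_N$ with the distinguished $\GL{i_i+i_{i+1}}$-block of the standard Levi subgroup of the parabolic $Q_i=P_{i_1,\dots,i_i+i_{i+1},\dots,i_n}$, a closed subgroup of $G_N$; and the twisting morphism $\phi$ becomes the inclusion of $G_{\si_i^{\pm 1}}$ as the product $G_\Bn\times G_{\beBn}$ of Levi subgroups of $P_\Bn\times P_{\beBn}$, acting on $G_N$ by left and right multiplication. Finally I would check that the open subvariety $\becircled Q_i=Q_i-Q_0$ of the statement is exactly the preimage of $\cO_0$ under the projection $Q_i\to\GL{i_i+i_{i+1}}$ (kill the unipotent radical $N_i$, then project to the distinguished block), so that under $m$ the sheaf $\F_{\si_i^{\pm 1}}$ is the $j_*$- (resp. $j_!$-) extension from $\becircled Q_i$ intersected with the Levi.

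It then remains to compute $\,{}^{P_\Bn\times P_{\beBn}}_{G_{\si_i^{\pm 1}}}m_*\F_{\si_i^{\pm 1}}$. Realizing this change-of-groups pushforward through the balanced product $Z=(P_\Bn\times P_{\beBn})\times^{G_{\si_i^{\pm 1}}}X_{\si_i^{\pm 1}}$, one gets a morphism $Z\to X_{\si_i^{\pm 1}}$ which is (a twist of) an affine bundle along which $\F_{\si_i^{\pm 1}}$ is smooth, and a $P_\Bn\times P_{\beBn}$-equivariant morphism $a\colon Z\to G_N$. Because $P_\Bn$ and $P_{\beBn}$ both contain the unipotent radical $N_i$ of $Q_i$ while their Levi factors sit inside the Levi of $Q_i$, the image of $a$ is exactly $Q_i$. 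I would then stratify $Q_i$ by $P_\Bn\times P_{\beBn}$-orbit type; these strata, after quotienting by $N_i$ and projecting to the distinguished block, are the Bruhat cells $\cO_k$ from the analysis of an $(i_i,i_{i+1})$-crossing (cf.\ Propositions~\ref{pure-constituents} and~\ref{one-d}). Over each stratum $a$ restricts to an iterated affine bundle, and over the open stratum $\becircled Q_i$ it has the same relative dimension as $Z\to X_{\si_i^{\pm 1}}$, so that the pushforward restricts there to $\uk_{\becircled Q_i}\langle i_ii_{i+1}\rangle$. Since $\F_{\si_i}$ (resp. $\F_{\si_i^{-1}}$) is the $*$- (resp. $!$-) extension of $\uk_{\cO_0}\langle i_ii_{i+1}\rangle$ from the open cell, and pushforward along a morphism which is stratum-by-stratum an affine bundle commutes, by base change, with $*$- and with $!$-extension, this produces precisely $j_*\uk_{\becircled Q_i}\langle i_ii_{i+1}\rangle$, resp. $j_!\uk_{\becircled Q_i}\langle i_ii_{i+1}\rangle$. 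The result lies in $D^b_{P_\Bn\times P_{\beBn}}(G_N)$ automatically, since $P_*$ is homotopy equivalent to $G_*$, as exploited in Section~\ref{sec:braids-sheav-groups}.

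The main obstacle is the middle step: controlling the non-proper morphism $a$. Its failure to be proper is exactly the mechanism by which the support --- which on $X_{\si_i^{\pm 1}}$ lies in the Levi of $Q_i$ --- is spread out over all of $Q_i$, so one cannot simply invoke proper base change. One must instead verify, stratum by stratum over the $P_\Bn\times P_{\beBn}$-orbits of $Q_i$ (matched against the cells $\cO_k$), that $a$ is an iterated affine bundle, so that $a_*$ and $a_!$ preserve the adjunction triangles cutting out the $*$- and $!$-extensions. Granting that, every other assertion --- the identification of $X_{\si_i^{\pm 1}}$ with the Levi of $Q_i$, of $\F_{\si_i^{\pm 1}}$ with the single-crossing sheaf, and of $\becircled Q_i=Q_i-Q_0$ with the preimage of the big Bruhat cell --- is a routine unwinding of the definitions of Sections~\ref{sec:descr-vari}, \ref{sec:descr-invar} and~\ref{sec:braids-sheav-groups}.
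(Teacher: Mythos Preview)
The paper states this proposition without proof, treating it as an immediate unwinding of the definitions from Sections~\ref{sec:descr-invar} and~\ref{sec:braids-sheav-groups}. Your balanced-product approach is the correct way to fill in those details, and the overall logic is sound.

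Your ``main obstacle'', however, dissolves on closer inspection: the map $a\colon Z\to Q_i$ is a \emph{global} affine-space fibration, not merely one stratum by stratum. In the coordinates $Z\cong U^+\times U^-\times L_i$ (unique representatives with $p\in U^+$, $q\in U^-$), write $U^\pm=N_i\rtimes(U^\pm\cap L_i)$ and decompose any $y\in Q_i$ uniquely as $y=n_y\ell_y$ with $n_y\in N_i$, $\ell_y\in L_i$. A short calculation shows that the fibre $a^{-1}(y)$ is parameterised freely by
\[
(n^-,v^+,v^-)\in N_i\times(U^+\cap L_i)\times(U^-\cap L_i),
\]
with $x=(v^+)^{-1}\ell_y v^-$ and $n^+=n_y\,\ell_y n^-\ell_y^{-1}$ then forced. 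So every fibre is an affine space of the same dimension, independently of the stratum containing $y$. Consequently $a_*$ and $a_!$ agree up to a global shift and twist, and both commute with $j_*$ and $j_!$ by the standard adjunction/composition arguments; no stratum-by-stratum bookkeeping is required.

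One minor correction: your assertion that, over the open stratum, $a$ has the same relative dimension as $Z\to X_{\si_i^{\pm1}}$ is false when $n>2$ (the two differ by $\dim N_i$). Fortunately this claim is irrelevant: since $a$ has affine (hence acyclic) fibres, $a_*$ carries the constant sheaf to the constant sheaf with no shift, and the normalisation $\langle i_i i_{i+1}\rangle$ already present in $\F_{\si_i^{\pm1}}$ is simply transported.
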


The global complex of this is
very close to the complex $\bM^+$ described in (\ref{pos-cross}),
considered as a complex of $R_{i_i,i_{i+1}}$-$R_{i_{i+1},i_i}$
bimodules.  However, we must extend scalars to get a complex of
$R_{\Bn}$-$R_{\si_i\Bn}$ bimodules
\begin{prop}
$\displaystyle{F(\si_i^{\pm 1})=R_{i_1,\dots,i_{i-1}}\otimes_\Q \bM^{\pm}\otimes_\Q R_{i_{i+2},\dots,i_k}.}$
\end{prop}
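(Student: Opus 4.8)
The plan is to reduce the computation to the single-crossing complexes $\bM^\pm$ of \eqref{pos-cross} and \eqref{neg-cross} by isolating the strands of $\si_i$ that miss the crossing. Set $b=i_i$, $c=i_{i+1}$, write $L=(i_1,\dots,i_{i-1})$ and $L'=(i_{i+2},\dots,i_k)$, and recall that, as rings of partially symmetric polynomials, $R_\Bn=R_L\otimes_\Q R_{b,c}\otimes_\Q R_{L'}$ and $R_{\si_i\Bn}=R_L\otimes_\Q R_{c,b}\otimes_\Q R_{L'}$, with the outer factors common to both.

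The key point is that, before pushing forward to $G_N$, the data $(X_{\si_i},G_{\si_i},\F_{\si_i^{\pm1}})$ is a \emph{product}. After adding a marked point to each of the $n-2$ through-strands (as the construction of Section~\ref{sec:descr-vari} requires), it decomposes as
\[
\big(G_{b+c},\,G_{b,c}\times G_{c,b},\,j_\dagger\uk_U\langle bc\rangle\big)\ \times\ \prod_{j\neq i,i+1}\big(G_{i_j},\,G_{i_j}\times G_{i_j},\,\uk\big),
\]
where $\dagger=*$ for $\si_i$ and $\dagger=!$ for $\si_i^{-1}$, the crossing factor carries the left--right multiplication action, and each through-strand factor carries the two-sided multiplication action. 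Now the local chromatographic complex of an exterior product is the tensor product of the local chromatographic complexes (the weight filtration of a $\boxtimes$-product is the product filtration), the map $m\colon X_{\si_i}\to G_N$ is a closed block-diagonal inclusion so ${}_{G_{\si_i}}^{P_\Bn\times P_{\si_i\Bn}}m_*$ preserves hypercohomology, and hypercohomology turns $\boxtimes$ into $\otimes_\Q$ by K\"unneth. Hence, writing $F(\si_1^{\pm1})$ for the global chromatographic complex of $j_\dagger\uk_U\langle bc\rangle$ on $G_{b+c}$ and noting that each through-strand contributes $\hc_{G_{i_j}\times G_{i_j}}(G_{i_j};\uk)\cong R_{i_j}$ concentrated in one degree — the \emph{diagonal} bimodule over $H^*(BG_{i_j})$, by the Bernstein--Lunts computation recalled in \cite[Theorem 3.3]{WW} — we obtain an isomorphism of complexes of $R_\Bn$-$R_{\si_i\Bn}$-bimodules
\[
F(\si_i^{\pm1})\ \cong\ R_L\ \otimes_\Q\ F(\si_1^{\pm1})\ \otimes_\Q\ R_{L'},
\]
in which $R_L$ and $R_{L'}$ are the diagonal bimodules over $H^*(BG_L)$ and $H^*(BG_{L'})$. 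This is an isomorphism of \emph{complexes}, not merely of spectral sequences, because $R_L$ and $R_{L'}$ are pure, so tensoring with them commutes with the weight filtration and with $\gr^W$.

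It remains only to identify $F(\si_1^{\pm1})$ with $\bM^{\pm}$. For a braid on two strands with labels $(b,c)$ the map $m$ is the identity, so $\fF_{\si_1^{\pm1}}=j_\dagger\uk_U\langle bc\rangle$, which is precisely the sheaf of the section analysing an $(m,n)$-crossing with $(m,n)=(b,c)$; by the Corollary of Section~\ref{sec:bimodules} computing the complexes \eqref{pos-cross} and \eqref{neg-cross} (via Propositions~\ref{pure-constituents} and \ref{one-d} and their global versions), the global chromatographic complex of $j_*\uk_U\langle bc\rangle$ is $\bM^+$ and that of $j_!\uk_U\langle bc\rangle$ is $\bM^-$, as complexes of $R_{i_i,i_{i+1}}$-$R_{i_{i+1},i_i}$-bimodules. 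Substituting yields $F(\si_i^{\pm1})=R_{i_1,\dots,i_{i-1}}\otimes_\Q\bM^{\pm}\otimes_\Q R_{i_{i+2},\dots,i_k}$. I expect the step requiring the most care to be the bimodule bookkeeping — in particular checking that each through-strand contributes the \emph{diagonal} bimodule $R_{i_j}$ rather than the free bimodule $R_{i_j}\otimes_\Q R_{i_j}$, so that the outer factors genuinely act diagonally on both sides; granting this, the proof is a direct combination of results established above. One can also reach the conclusion without the product decomposition, by writing $\si_i$ as the side-by-side juxtaposition of the trivial braids on the strands coloured $L$ and $L'$ with the single $(b,c)$-crossing and invoking that $T\mapsto(\text{global chromatographic complex of }\F_T)$ is a canopolis map (Section~\ref{sec:canopolis}).
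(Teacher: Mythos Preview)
Your argument is correct. The paper states this proposition without a detailed proof: it first identifies $\fF_{\si_i^{\pm1}}$ explicitly as $j_*\uk_{\becircled Q_i}\langle i_ii_{i+1}\rangle$ (resp.\ $j_!$) on $G_N$, and then asserts that its $P_\Bn\times P_{\si_i\Bn}$-equivariant global chromatographic complex is the claimed extension of scalars of $\bM^{\pm}$, the point being that the stratification of $Q_i$ by $P_\Bn\times P_{\si_i\Bn}$-orbits is, block by block, the stratification of $G_{b+c}$ by $P_{b,c}\times P_{c,b}$-orbits already analysed.

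Your route is slightly different: you stay on $X_{\si_i}$, exploit its product decomposition, and invoke K\"unneth, never needing the explicit identification of $\fF_{\si_i}$ on $G_N$. This is a perfectly good alternative and is in fact closer in spirit to the canopolis description of Section~\ref{sec:canopolis}. The only point that deserves a word of justification is your claim that ${}_{G_{\si_i}}^{P_\Bn\times P_{\si_i\Bn}}m_*$ preserves hypercohomology and the weight filtration; but this is exactly what the paper records at the end of Section~\ref{sec:braids-sheav-groups} (the factorisation of $m$ through $X_\be/G_\be^\iota$ as an affine bundle followed by a proper map), so you may simply cite that. Your bookkeeping on the through-strand factors is also right: $\hc_{G_{i_j}\times G_{i_j}}(G_{i_j};\uk)\cong H^*(BG_{i_j})$ with the diagonal bimodule structure, by the quotient equivalence.
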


Again, this is precisely the complex given in \cite[\S 8]{MSV} up to grading shift.

If $\beBn=\Bn$, then we can close this braid to a link.  Our
definition of the knot invariant for this link is the equivariant
chromatographic complex for the diagonal $P_\Bn$-action.  By the
authors' previous work \cite[Theorem 1.2]{WW}, this coincides with the
Hochschild homology $\HH^*(F(\be))$, applied termwise of the complex
$F(\be)$.

\begin{prop}
  The cohomology of the complex $\HH^*_{R_{\Bn}}(F(\be))$ coincides with the
  invariant $\KM_2(\clo{\be})$ of the closure of the braid.

  In fact, the chromatographic spectral sequence is exactly the natural spectral sequence
  \begin{equation*}
    \H^i\!\left(\HH^j(F(\be))\right)\Rightarrow \H^{i+j}(R_{\Bn}\overset{L}\otimes_{R_{\Bn}\otimes R_{\Bn}}F(\be)).
  \end{equation*}
\end{prop}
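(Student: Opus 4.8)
The plan is to identify the two objects in the statement --- the chromatographic spectral sequence of $\fF_\be$ viewed as a $(P_\be)_\Delta$-equivariant sheaf, and the natural hyper-Tor spectral sequence --- and show they literally coincide. First I would recall from Section~\ref{sec:braids-sheav-groups} that the chromatographic spectral sequence of $\fF_\be$ as a $(P_\be)_\Delta$-sheaf is, by definition, the spectral sequence \eqref{eq:chrspecseq} with $T = \hc_{(P_\be)_\Delta}(G_N; -)$ applied to the weight filtration on $\fF_\be$; and by the quotient-equivalence discussion together with \cite[Theorem 1.2]{WW}, applying $\hc_{(P_\be)_\Delta}(G_N;-)$ to a $P_\be \times P_\be$-equivariant sheaf computes the Hochschild homology $\HH^*_{R_\Bn}(-)$ of its $P_\be \times P_\be$-equivariant hypercohomology bimodule. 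Since $F(\be)$ is by definition the $P_\Bn \times P_{\beBn}$-equivariant global chromatographic complex of $\fF_\be$ (a complex of bimodules whose terms are $\hc_{P_\be\times P_\be}$ of the pure constituents $\gr^W_\ell \fF_\be$), this already matches the $E_1$-page $\H^i(\HH^j(F(\be)))$ up to re-indexing: each $\gr^W_\ell\fF_\be$ is pure hence $P_\be\times P_\be$-equivariantly formal, so $\hc_{(P_\be)_\Delta}(\gr^W_\ell\fF_\be) = \HH^*(\hc_{P_\be\times P_\be}(\gr^W_\ell\fF_\be))$, i.e.\ the $E_1$ term of the chromatographic spectral sequence for $T=\hc_{(P_\be)_\Delta}$ is exactly $\HH^*$ applied termwise to the complex $F(\be)$.

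Next I would match the differentials and the abutment. The remark following \eqref{eq:chrspecseq} says the $E_1$-differential of the chromatographic spectral sequence is obtained by applying $R^qT(-)$ to the local chromatographic complex \eqref{eq-chromo1}; taking $T = \hc_{(P_\be)_\Delta}(-)$ and using equivariant formality of each graded piece, this is precisely the differential $\HH^*$ of the maps $\partial^\pm_i$ assembled into $F(\be)$ --- in other words the $E_1$-page with its differential is the double complex $\HH^j(F(\be))$ with the induced differential in the $i$ direction, and its $E_2$-page is $\H^i(\HH^j(F(\be)))$. For the abutment, the chromatographic spectral sequence converges to $\hc_{(P_\be)_\Delta}(G_N; \fF_\be)$, which by \cite[Theorem 1.2]{WW} (Hochschild homology = conjugation-equivariant cohomology) equals $R_\Bn \overset{L}\otimes_{R_\Bn \otimes R_\Bn} \hc_{P_\be\times P_\be}(G_N;\fF_\be)$; and since $F(\be)$ is a complex of bimodules computing this bi-equivariant hypercohomology, this is $\H^{i+j}(R_\Bn \overset{L}\otimes_{R_\Bn\otimes R_\Bn} F(\be))$, which is exactly the abutment of the standard hyper-Tor (hyper-Hochschild-homology) spectral sequence of a complex of bimodules. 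The grading bookkeeping --- that homological degree on $F(\be)$ plus internal Tor-degree matches the $(i,j)$-indexing of the chromatographic spectral sequence --- is routine given the grading conventions already fixed in Definition~\ref{chr-ss-def}.

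The first statement of the proposition then follows by reading off $E_2$: the cohomology $\H^i(\HH^j(F(\be)))$ of the complex $\HH^*_{R_\Bn}(F(\be))$ is the $E_2$-page, which is $\KM_2(\clo\be)$ by definition of $\KM_i$ and the theorem identifying the chromatographic spectral sequences of $\F_{\clo\be}$ and $\fF_\be$. I expect the main obstacle to be the comparison of the two spectral sequences at the level of the full filtered object rather than just the $E_1$-page: one must check that the weight filtration on $\fF_\be$ and the ``stupid'' filtration on the complex of bimodules $F(\be)$ induce the \emph{same} filtration on $\hc_{(P_\be)_\Delta}(\fF_\be)$, so that the two spectral sequences agree on all pages and not merely from $E_2$ onward. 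This is where equivariant formality of the pure pieces is doing the real work --- it guarantees no higher differentials appear when passing from $\hc_{P_\be\times P_\be}$ to $\HH^*$, so the hyper-Tor spectral sequence for $F(\be)$ degenerates appropriately and is built from exactly the same graded pieces as the chromatographic one. Once this compatibility of filtrations is in hand, the identification of the two spectral sequences, and hence of their $E_2$-pages, is forced.
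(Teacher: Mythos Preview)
Your approach is correct in outline and would yield the result, but it differs from the paper's in an instructive way. The paper does not match the spectral sequences page by page; instead it pushes $\fF_\be$ forward along $\pi:G_N\to\pt$ to obtain an object of $D_{P_\Bn\times P_\Bn}(\pt)$, and then invokes the equivalence between this category and dg-$R_\Bn$-bimodules from \cite[Theorem~7]{WWequ}. Under that equivalence, $\pi_*\fF_\be$ is sent to $F(\be)$ and the weight filtration on the former is sent to the ``stupid'' filtration of the complex $F(\be)$. The two spectral sequences are then literally the same spectral sequence viewed through an equivalence of categories, so the worry you raise about matching filtrations on $\hc_{(P_\be)_\Delta}(\fF_\be)$ is dissolved rather than confronted.

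Your route, by contrast, matches $E_1$-terms and $d_1$ by hand using equivariant formality of the pure pieces and \cite[Theorem~1.2]{WW}; this certainly suffices for the $E_2$ identification (the first assertion of the proposition), but, as you correctly flag, leaves the identification of all higher pages contingent on showing the two filtrations agree on the abutment. That step can be carried out --- it is essentially the content packaged into the equivalence of \cite{WWequ} --- but the paper's categorical argument buys you all pages at once with no additional bookkeeping.
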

\begin{proof}
  Let $\pi:G_N \to pt$, and consider the object $\pi_*\fF_\be$ in
  the equivariant derived category $D_{P_{\Bn}\times P_{\Bn}}(pt)$.
  Under the equivalence to $R_{\Bn}$-dg-bimodules given in
  \cite[Theorem 7]{WWequ}, this is sent to the complex $F(\si)$.
  Similarly, the weight filtration is sent to that induced by thinking
  of $F(\be)$ as a complex.  Thus, the spectral sequences match under
  this equivalence.
\end{proof}

Since $\H^*\!\left(\HH^*(F(\be))\right)$ is precisely the invariant proposed by
\cite{MSV}, Theorem \ref{comparison} follows immediately.

\section{Decategorification}
\label{sec:decategorification}

We also wish to show that our knot invariant is, in fact, a
categorification of the HOMFLYPT polynomial.  

\subsection{A categorification of the Hecke algebra}
\label{sec:categ-hecke-algebra}

This requires a few basic
results about the relationship between sheaves on $G_n$ and the Hecke
algebra $\Hec_n$.  As usual, $B=P_{1,\dots,1}$ is the standard Borel.
\begin{defi}
  The Hecke algebra $\Hec_n$ is the algebra over $\Z[q^{\nicefrac{1}{2}},q^{\nicefrac{-1}{2}}]$ given by the quotient of the group algebra of the braid group $\Br_n$ by the quadratic relation
  \begin{equation*}
    (\si_i+q^{\nicefrac{1}{2}})(\si_i-q^{\nicefrac{-1}{2}})=0
  \end{equation*}
  for each elementary twist $\si_i$.
\end{defi}

\begin{prop}[\cite{KW}]
  The Grothendieck group $K^0\hspace{-.8mm}\left(D^b_{B\times B}(G_n)\right)$ of the equivariant derived category $D^b_{B\times B}(G_n)$ is isomorphic to the Hecke algebra $\Hec_n$, with the convolution product decategorifying to the algebra product in $\Hec_n$.

This map is fixed by the assignment $$[j_*\uk_{Bs_iB}]\mapsto q^{\nicefrac{1}{2}}\si_i$$ where $j:Bs_iB\hookrightarrow G_n$ is the obvious inclusion.
\end{prop}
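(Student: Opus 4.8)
The plan is to transport the statement, via the function--sheaf dictionary of Section~\ref{sec:ff}, to the classical realisation of the Iwahori--Hecke algebra as a convolution algebra of functions on $\FF_{q^r}$-points. Throughout write $\nabla_w := j_{w,*}\uk_{BwB}$ for the $*$-extension of the (unshifted) constant sheaf on the $B\times B$-orbit $BwB\subset G_n$, $w\in S_n$. I would first pin down a basis of $K^0$. Using the Bruhat decomposition $G_n=\bigsqcup_{w}BwB$, each orbit $BwB$ is a single $B\times B$-orbit, an affine-space bundle over $B$ with connected solvable point stabiliser, so the $K^0$ of the mixed equivariant derived category of each orbit is free of rank one over $\Z[q^{\nicefrac{1}{2}},q^{\nicefrac{-1}{2}}]$, generated by the constant sheaf --- here the class of the half Tate twist $\uk(\nicefrac{1}{2})$ is identified with $q^{\nicefrac{-1}{2}}$. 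The recollement triangles for the stratification split in $K^0$, so $K^0(D^b_{B\times B}(G_n))$ is free over $\Z[q^{\nicefrac{1}{2}},q^{\nicefrac{-1}{2}}]$ with basis $\{[\nabla_w]\}_{w\in S_n}$ (equivalently one may use the $!$-extensions $\Delta_w$ or the simple sheaves $\IC_w$); in particular it has rank $n!$.

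Next I would check that convolution decategorifies correctly. The functor $\star$ is a composite of external tensor product, restriction of equivariance, and a \emph{proper} pushforward (the relevant twisted-product map has partial flag varieties as fibres), each of which is an exact functor of triangulated categories; hence $\star$ descends to an associative, unital $\Z[q^{\nicefrac{1}{2}},q^{\nicefrac{-1}{2}}]$-bilinear product on $K^0$, with unit $[\uk_B]=[\nabla_e]$. For every $r\ge 1$, the supertrace-of-Frobenius map $\F\mapsto[\F]_r$ of Section~\ref{sec:ff} is then a ring homomorphism from $K^0(D^b_{B\times B}(G_n))$ to the convolution algebra of $B(\FF_{q^r})$-biinvariant functions on $G_n(\FF_{q^r})$: additivity of supertraces in distinguished triangles gives a map of abelian groups, and proper base change together with the Grothendieck--Lefschetz formula applied to the convolution diagram gives multiplicativity. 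By Proposition~\ref{funct-sheaf}, taken over all $r$, this family of homomorphisms is injective.

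It then remains to identify the product. Put $N_i := [\nabla_{s_i}]=[j_{s_i,*}\uk_{Bs_iB}]$. A point count shows that $[\nabla_{s_i}]_r$ is the function equal to $1$ on $Bs_iB(\FF_{q^r})$ and to $1-q^r$ on $B(\FF_{q^r})$, and the rank-one geometry $\overline{Bs_iB}/B\cong\P^1$ gives $N_i\cdot[\nabla_w]=[\nabla_{s_iw}]$ when $\ell(s_iw)>\ell(w)$ and $N_i\cdot[\nabla_w]=(1-q)[\nabla_w]+q[\nabla_{s_iw}]$ otherwise; in particular $(N_i+q)(N_i-1)=0$. Consequently the $N_i$ generate $K^0$ as an algebra (since $N_{i_1}\cdots N_{i_\ell}=[\nabla_w]$ for any reduced word $w=s_{i_1}\cdots s_{i_\ell}$), they satisfy the braid relations (both sides of a rank-two braid relation equal $[\nabla_w]$ for the common element $w$), and they satisfy the quadratic relation $(N_i+q)(N_i-1)=0$, which is precisely the paper's relation $(\si_i+q^{\nicefrac{1}{2}})(\si_i-q^{\nicefrac{-1}{2}})=0$ after substituting $N_i=q^{\nicefrac{1}{2}}\si_i$. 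Thus $q^{\nicefrac{1}{2}}\si_i\mapsto[j_{s_i,*}\uk_{Bs_iB}]$ defines a surjective $\Z[q^{\nicefrac{1}{2}},q^{\nicefrac{-1}{2}}]$-algebra homomorphism $\Hec_n\to K^0(D^b_{B\times B}(G_n))$; it is injective, hence an isomorphism, since $\Hec_n$ is classically free of rank $n!$ over $\Z[q^{\nicefrac{1}{2}},q^{\nicefrac{-1}{2}}]$ and after extending scalars to $\Q(q^{\nicefrac{1}{2}})$ both sides have the same dimension. Compatibility of $\star$ with the algebra product of $\Hec_n$ is then automatic.

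The main obstacle, I expect, is not any of the formal steps but (i) establishing rigorously that $K^0$ of the \emph{mixed equivariant} derived category is free on $\{[\nabla_w]\}$ --- this uses the Tate-ness of all the relevant strata and stalks together with the formal properties of the equivariant category recorded in \cite{WWequ} and \cite{BL} --- and (ii) tracking all the normalisations (the half Tate twist $\leftrightarrow q^{\nicefrac{-1}{2}}$, the unshifted $j_*$, the supertrace signs in the $\P^1$-computation) carefully enough that the single rank-one calculation produces exactly the paper's quadratic relation and not a Tate-twisted or dual variant. Once those are nailed down, the identification is forced.
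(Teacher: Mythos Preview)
The paper does not supply a proof of this proposition; it is simply stated with a citation to \cite{KW}. Your argument is correct and is the standard one: identify a $\Z[q^{\nicefrac{1}{2}},q^{\nicefrac{-1}{2}}]$-basis $\{[\nabla_w]\}$ of $K^0$ from the Bruhat stratification, use the rank-one $\P^1$ computation to verify the quadratic and braid relations for the classes $N_i=[\nabla_{s_i}]$, and conclude by a rank count that the surjection from $\Hec_n$ is an isomorphism. The function--sheaf paragraph is sound but not strictly necessary for the argument as you ultimately run it, since you establish the relations directly in $K^0$ rather than by transport from the convolution algebra of $B$-biinvariant functions; it does, however, give a clean way to see multiplicativity of the decategorification and serves as a useful consistency check on the normalisation $N_i\leftrightarrow q^{\nicefrac{1}{2}}\si_i$.
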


Let $\F$ be a $\BB$-equivariant sheaf on $G_n$.  Then we have a map 
\begin{equation*}
  \Eul_{B}(G;\F)=\sum_{i,j,k}(-1)^{\ell}q^{\nicefrac{j}{2}}t^k\dim\mathbb{H}^{j-\ell;{j-k}}_{B_\Delta}(\gr^W_\ell\F)
\end{equation*}
sending the class of $\F$ in the Grothendieck group to the bi-graded
Euler characteristic of its global chromatographic complex, often
called the {\bf mixed Hodge polynomial}.

This map agrees with a previously known trace on the Hecke algebra, a
fact that the authors have proven in a separate note, due to its
independent interest and separate connection to the question of
constructing Markov traces on general Hecke algebras.

\begin{prop}{\cite[Theorem 1]{WWmar}}\label{JO}
The map $\Eul_B(G_n;-)$ is the Jones-Ocneanu trace $\Tr$ \cite{Jon87} on $\Hec_n$ with appropriate normalization factors.
\end{prop}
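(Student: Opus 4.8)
The plan is to pin down $\Tr$ by its universal property and check that $\Eul_B(G_n;-)$, after fixing normalizing factors, has it. Recall that the Jones--Ocneanu trace is the unique family of $\Z[q^{\nicefrac{1}{2}},q^{\nicefrac{-1}{2}}]$--linear functionals $\Hec_n\to\Z[q^{\nicefrac{1}{2}},q^{\nicefrac{-1}{2}},z]$, compatible with the inclusions $\Hec_n\hookrightarrow\Hec_{n+1}$, which are symmetric ($\Tr(ab)=\Tr(ba)$), normalized by $\Tr(1)=1$, and satisfy the Markov identities $\Tr(x\si_n)=z\,\Tr(x)$ and $\Tr(x\si_n^{-1})=\bar z\,\Tr(x)$ for $x\in\Hec_n\subset\Hec_{n+1}$. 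So it suffices to prove three things: that $\Eul_B(G_n;-)$ descends to a linear functional on $K^0(D^b_{B\times B}(G_n))\cong\Hec_n$; that it is symmetric for the convolution product; and that it satisfies the stabilization identities with the correct parameters.

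First I would check well--definedness. Since the chromatographic spectral sequence consists of finite--dimensional bigraded spaces, its Euler characteristic agrees with that of its $E_1$--page, so $\Eul_B(G_n;\F)$ is an alternating sum of weight--graded Betti numbers of $\hc_{B_\Delta}(G_n;\F)$; this is additive over distinguished triangles and hence factors through the Grothendieck group, which by the recalled result of \cite{KW} is $\Hec_n$. Tracking the Tate twists $\langle\cdot\rangle$ built into our conventions shows that the resulting functional is $\Z[q^{\nicefrac{1}{2}},q^{\nicefrac{-1}{2}}]$--linear. For the symmetry, I would argue geometrically: given $\F_1,\F_2\in D^b_{B\times B}(G_n)$, the space $\hc_{B_\Delta}(G_n;\F_1\star\F_2)$ is computed, via the convolution diagram $G_n\times G_n\overset{\mu}\to G_n$ and the quotient equivalence \eqref{quot-equiv}, from the exterior product $\F_1\boxtimes\F_2$ on $G_n\times G_n$ together with a suitable equivariant structure. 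The involution $(g,h)\mapsto(h^{-1},g^{-1})$ of $G_n\times G_n$, combined with inversion on $G_n$ (equivariant once the outer two $B$'s are swapped), carries the data defining $\F_1\star\F_2$ to that defining $\F_2\star\F_1$ and preserves Frobenius weights; hence the two triply graded hypercohomology spaces are isomorphic, and comparing Euler characteristics gives $\Eul_B(G_n;\F_1\star\F_2)=\Eul_B(G_n;\F_2\star\F_1)$. This is the sheaf--theoretic shadow of $\HH^*(M\otimes N)\cong\HH^*(N\otimes M)$, and could also be deduced from \cite{WW}.

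The crux is the stabilization identity, and this is where I expect the real work. For a braid $\be$ on $n$ strands I want $\Eul_B(G_{n+1};\fF_{\be\si_n^{\pm1}})$ in terms of $\Eul_B(G_n;\fF_\be)$. Using $\fF_{\be\si_n^{\pm1}}\cong\fF_\be\star\fF_{\si_n^{\pm1}}$ and the fact that $\fF_{\si_n^{\pm1}}$ is $j_*\uk$ (resp.\ $j_!\uk$) on the big Bruhat cell $\becircled Q_n\subset G_{n+1}$, one factors the map $G_{n+1}\to\pt$ through the inclusion of the copy of $G_n$ on the first $n$ strands; the extra $\GL{1}$--factor together with the final crossing then contributes an explicit correction term, which is a diagonal $\GL{1}$--equivariant hypercohomology group equal to $H^*(B\GL{1})$ tensored with the cohomology of a $\P^1$ in one crossing sign and of a point in the other. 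Its Euler characteristic, once the normalizing shifts $\langle\cdot\rangle$ are accounted for, is precisely the Markov parameter $z$ (resp.\ $\bar z$) of $\Tr$; concretely this is the computation of Proposition~\ref{pure-constituents} carried out in the presence of the extra $\GL{1}$, and the delicate point is bookkeeping the three gradings so that the weight grading in the correction term specializes correctly. With the symmetry and both stabilization identities in hand and the parameters matched, uniqueness of $\Tr$ forces $\Eul_B(G_n;-)=\Tr$ up to the asserted normalizing factors. (Alternatively one could evaluate both sides on the standard basis $\{[j_*\uk_{BwB}]\}_{w\in S_n}$ of $\Hec_n$, but the Markov route is both cleaner and the one demanded by the more general Markov--trace construction of \cite{WWmar}.)
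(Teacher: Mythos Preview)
The paper does not actually prove this proposition: it is stated with a citation to \cite[Theorem 1]{WWmar} and the surrounding text explicitly says the result ``the authors have proven in a separate note''. So there is no proof here to compare against, only the outsourced reference.

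That said, your strategy---pin down the Jones--Ocneanu trace by its uniqueness as a Markov trace and verify the three axioms for $\Eul_B(G_n;-)$---is exactly the natural one, and is almost certainly the route taken in \cite{WWmar}. Your treatment of well-definedness (additivity on triangles, hence descent to $K^0$) is fine. For symmetry, your involution argument is more elaborate than necessary: the cleanest way is to note that $\hc_{B_\Delta}(G_n;\F_1\star\F_2)$ is, after unwinding the convolution and the quotient equivalence, the $B^3$-equivariant hypercohomology of $\F_1\boxtimes\F_2$ on $G_n\times G_n$ for the action where the ``outer'' copies of $B$ are identified; swapping the two $G_n$ factors is then a manifest symmetry. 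No inversions are needed.

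The Markov/stabilization step is, as you say, the real content, and your sketch is correct in spirit but thin on the actual computation. Observe that the present paper \emph{does} carry out exactly this computation, in slightly different packaging: the Reidemeister~I argument in Section~\ref{sec:invariant} analyzes ${}^{G'}_G d_*\F_D$ for a single positive crossing with one end closed up, and reads off the effect of the extra $G_1$-factor and the crossing as a tensor factor $H^\bullet(\P^\infty)\otimes H^\bullet(G_1)$ (with the appropriate Tate twist). That is precisely the ``explicit correction term'' you are gesturing at, and its Euler characteristic gives the Markov parameter. So if you want to make your argument self-contained within this paper, you should point to that Reidemeister~I computation rather than to Proposition~\ref{pure-constituents}, which concerns the weight filtration of a crossing but not the trace of its closure.
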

\begin{remark}
  This geometric definition applies equally well to any simple Lie
  group, and defines a canonical trace on the Hecke algebra for any
  type.  In fact, our construction can be modified in a
  straightforward way to a ``triply graded homology'' invariant on all
  Artin braid groups.  In type B, this can be interpreted as a
  homological knot invariant for knots in the complement of a torus.
\end{remark}

\subsection{Decategorification for colored HOMFLYPT}
\label{sec:decat-color-homflypt}

To apply this result, we must relate our construction to the
categorification of the Hecke algebra above.  Recall that if $\si$ is
a braid labeled all with 1's, then $\fF_\si$ is an object of
$D^b_{B\times B}(G_n)$

\begin{prop}
  The class $[\fF_{\si}]\in\Hec_n$ is the image of $\si$ under the
  natural map $\Br_n\to \Hec_n$.
\end{prop}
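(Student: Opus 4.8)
The plan is to reduce everything to the elementary twists $\si_i^{\pm1}$, using the convolution theorem $\fF_\be\star\fF_{\be'}\cong\fF_{\be\be'}$ together with the identification $K^0\!\left(D^b_{B\times B}(G_n)\right)\cong\Hec_n$ of \cite{KW}. The only computational input is at a single generator. Specialise the proposition above that computes $\fF_{\si_i}$ to the case where all the labels equal $1$: then $Q_i=P_{s_i}$ is the minimal parabolic for $s_i$, its open cell $\becircled{Q}_i$ is $Bs_iB$, and $i_ii_{i+1}=1$, so that
\[
\fF_{\si_i}\;=\;j_*\uk_{Bs_iB}\langle 1\rangle,\qquad \fF_{\si_i^{-1}}\;=\;j_!\uk_{Bs_iB}\langle 1\rangle,
\]
where $j\colon Bs_iB\hookrightarrow G_n$ is the inclusion. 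The cited result of \cite{KW} gives $[\,j_*\uk_{Bs_iB}\,]=q^{\nicefrac{1}{2}}\si_i$ in $\Hec_n$, and tracking the effect of the shift $\langle 1\rangle=[1](\nicefrac{1}{2})$ on Grothendieck classes then yields $[\fF_{\si_i}]=\si_i$, the image of the braid generator.

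The remainder is formal. Because convolution of $B\times B$-equivariant sheaves on $G_n$ decategorifies to the multiplication of $\Hec_n$, the convolution theorem shows that $c\colon\be\mapsto[\fF_\be]$ is multiplicative on braid diagrams. Applying it with $\be'$ the trivial braid $1\in\Br_n$ gives $\fF_{\si_i}\star\fF_1\cong\fF_{\si_i}$, hence $[\fF_{\si_i}]\cdot[\fF_1]=[\fF_{\si_i}]$; since $[\fF_{\si_i}]=\si_i$ is invertible in $\Hec_n$ we may cancel it to conclude $[\fF_1]=1$. Then $[\fF_{\si_i}]\cdot[\fF_{\si_i^{-1}}]=[\fF_1]=1$ forces $[\fF_{\si_i^{-1}}]=\si_i^{-1}$, so $c$ agrees with the natural map $\Br_n\to\Hec_n$ on all the generators $\si_i^{\pm1}$; as both maps are multiplicative and the $\si_i$ generate $\Br_n$, they agree on every braid. (This also shows a posteriori that $[\fF_\be]$ depends only on the class of $\be$ in $\Br_n$.)

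The step I expect to be genuinely delicate is the last clause of the first paragraph: one must match the normalisation of the isomorphism $K^0\!\left(D^b_{B\times B}(G_n)\right)\cong\Hec_n$ of \cite{KW} with the conventions of Section \ref{sec:mixed} --- in particular with the definition $\langle 1\rangle=[1](\nicefrac{1}{2})$ and with the chosen normalisation of the half-Tate twist --- so as to be sure that $[\fF_{\si_i}]$ is \emph{exactly} $\si_i$, and not off by a sign or a power of $q^{\nicefrac{1}{2}}$. One way to settle this intrinsically is to observe that the recollement triangle for the stratification $P_{s_i}=B\sqcup Bs_iB$ relates $[\fF_{\si_i}]$ to the classes of $\uk_B$ and $j_!\uk_{Bs_iB}\langle 1\rangle$, and thereby shows that $[\fF_{\si_i}]$ satisfies the defining quadratic relation $(\si_i+q^{\nicefrac{1}{2}})(\si_i-q^{\nicefrac{-1}{2}})=0$ of $\Hec_n$; combined with the \cite{KW} computation this pins $[\fF_{\si_i}]$ down as the standard generator rather than, say, its image under the bar involution. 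Everything else in the argument is purely formal manipulation.
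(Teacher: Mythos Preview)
The paper states this proposition without proof, treating it as immediate from the preceding material. Your argument is the natural way to make this explicit, and it is essentially correct: reduce to generators via the convolution isomorphism $\fF_\be\star\fF_{\be'}\cong\fF_{\be\be'}$ (which the paper proves for composition of \emph{diagrams}, which is all you need here), then identify $\fF_{\si_i^{\pm1}}$ with the appropriately shifted standard/costandard sheaf on $Bs_iB$ using the specialization of the proposition in Section~\ref{sec:build-glob-chrom}, and invoke the \cite{KW} identification.

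Two small remarks. First, your detour through $\fF_1$ to get $[\fF_{\si_i^{-1}}]=\si_i^{-1}$ is unnecessary: one can compute $[j_!\uk_{Bs_iB}\langle 1\rangle]$ directly, or observe that Verdier duality exchanges $j_*$ and $j_!$ and corresponds to the bar involution on $\Hec_n$. Second, you are right that the normalization is the only genuinely delicate point. With the paper's conventions, $\langle 1\rangle=[1](\nicefrac{1}{2})$ acts on the Grothendieck group (via the function--sheaf dictionary) as multiplication by $-q^{-\nicefrac{1}{2}}$, so a literal reading of the \cite{KW} assignment $[j_*\uk_{Bs_iB}]\mapsto q^{\nicefrac{1}{2}}\si_i$ would give $[\fF_{\si_i}]=-\si_i$. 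This sign is harmless for the application (it is absorbed in the ``suitably normalized'' of Theorem~\ref{decat}), but if you want to match the stated proposition exactly you should track the sign convention in the natural map $\Br_n\to\Hec_n$; your suggestion of checking the quadratic relation via the recollement triangle on $P_{s_i}$ is a clean way to do this.
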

This, combined with Proposition \ref{JO}, gives a new proof of the result of Khovanov \cite{Kho05} that all components are
labeled with 1, the invariant
$$\Eul(L)=\Eul_{G_L}(X_L;\F_L)=\sum_{i,j,k}(-1)^{\ell}q^jt^k\dim
\KM_2^{j;k;\ell}(L)$$ 
is the appropriately normalized HOMFLYPT polynomial of $L$.  
We wish to extend this to the colored case.  For this, we must use a ``cabling/projection'' formula.

Consider a closable colored braid $\si$, and let $P=P_\Bn$ and $G=G_N$. We have defined a $P\times P$-equivariant sheaf $\fF_\si$ on $G$ by the multiplication map $m:X_\si\to G$.  

\begin{thm}\label{decat}
  For any colored link $L$, the Euler characteristic $\Eul(L)$ is the (suitably normalized) colored HOMFLYPT polynomial.
\end{thm}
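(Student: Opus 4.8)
The plan is to reduce Theorem~\ref{decat} to the combination of three ingredients already assembled in the paper: the cabling/projection description of the colored HOMFLYPT polynomial through $\pi_\be\mathbf{H}_N\pi_\be$ (the path of \cite{LZ}), the identification of the decategorification map $\Eul_B(G_N;-)$ with the Jones--Ocneanu trace (Proposition~\ref{JO}), and the compatibility of our geometric constructions with convolution and closure. The key point is that all our sheaves are built, via $\boxtimes$ and $\res$, from the crossing sheaves $\fF_{\si_i^{\pm1}}$, so on Grothendieck groups everything is determined by what happens for a single elementary twist.

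First I would make precise the analogue for a colored braid of the statement $[\fF_{\si}]=$ image of $\si$ in $\Hec_n$. For a colored braid $\be$ with colored index $N$, the projection $\pi_\be\in\mathbf{H}_N$ onto the parabolically-induced representation is realized geometrically as the class $[\uk_{P_\be}]\in K^0(D^b_{B\times B}(G_N))$ (up to the standard normalization), since $P_\be/B$ is smooth projective and its constant sheaf is the pushforward $(\text{incl})_*\uk$. The cabling map sends a crossing $\si_i^{\pm1}$ of an $(i_j,i_{j+1})$-colored pair to the corresponding half-twist on the $N$ ungrouped strands, conjugated by $\pi_\be$; on the sheaf side this is exactly the content of the Proposition asserting $\fF_{\si_i^{\pm1}}=j_{*/!}\uk_{\becircled Q_i}\langle i_ii_{i+1}\rangle$, since $\becircled Q_i\subset G_N$ is precisely the $B\times B$-orbit-closure cabling of $Bs_\bullet B$-cells that represents the braided projector. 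So $[\fF_\be]=\pi_\be\,[\text{(cabled }\be)]\,\pi_\be\in\pi_\be\mathbf{H}_N\pi_\be$.

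Next, $\Eul(L)$ for $L=\clo\be$ equals, by definition, the bigraded Euler characteristic of the global chromatographic complex of $\fF_\be$ under the diagonal $(P_\be)_\Delta$-equivariant hypercohomology. Since taking Euler characteristic kills the weight (= homological) direction of the chromatographic spectral sequence, $\Eul(L)$ depends only on the class $[\fF_\be]$ in $K^0(D^b_{P_\be\times P_\be}(G_N))$ and on the trace functional ``take $(P_\be)_\Delta$-equivariant hypercohomology, then alternating-sum dimensions''. Pulling back along $\res^{P_\be}_{B}$ this trace is identified with $\Eul_B(G_N;-)$ evaluated on the class, which by Proposition~\ref{JO} is the Jones--Ocneanu trace $\Tr$. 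Hence $\Eul(L)=\Tr([\fF_\be])=\Tr(\pi_\be\,\be\,\pi_\be)$, which is by the theorem of \cite{LZ} exactly the (normalized) colored HOMFLYPT polynomial of $L$. One then checks the normalization factors (powers of $q^{1/2}$ and $(q^{1/2}-q^{-1/2})^{\pm1}$) match up, using the grading shifts $\langle ij\rangle$ built into the definition of the crossing sheaves; this is a finite bookkeeping check on a single crossing and on the closure of the colored identity braid.

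The main obstacle is the second step: verifying that the diagonal-equivariant hypercohomology trace really does factor through $\Eul_B(G_N;-)$ and hence equals the Jones--Ocneanu trace, rather than merely some trace-like functional on $\pi_\be\mathbf{H}_N\pi_\be$. The subtlety is that $\Eul_B$ is defined on $B\times B$-equivariant sheaves on $G_N$, whereas $\fF_\be$ lives naturally in the $P_\be\times P_\be$-equivariant category; one must check that restricting equivariance from $P_\be$ to $B$ commutes with the weight filtration and with hypercohomology (which it does, as $P_\be/B$ is smooth projective, so $\res^{P_\be}_B$ is exact, preserves purity, and the corresponding hypercohomologies differ by tensoring with $H^*(P_\be/B)$, a pure even-weight factor that contributes a harmless normalization constant). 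Granting this, and granting Proposition~\ref{JO}, the theorem follows; the rest is the normalization computation, which I would relegate to a short lemma comparing our shift conventions with those of \cite{MOY} or \cite{LZ}.
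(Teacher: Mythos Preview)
Your proposal is correct and follows essentially the same route as the paper: reduce from $P_\be$- to $B$-equivariance, identify the resulting class in $\Hec_N$ as (a normalization of) $\pi_P$ applied to the cabled braid, then invoke Proposition~\ref{JO} and the cabling/projection formula of \cite{LZ}. The paper organizes the first two steps into two short lemmata---one showing $\res^{P\times P}_{P\times B}\fF_\si\cong \ind^{P\times B}_{B\times B}\fF_{\si_{cab}}$, the other showing $[\res^{B\times B}_{P\times B}\ind^{P\times B}_{B\times B}\fF]=q_P\,\pi_P[\fF]$ with $q_P=\sum_{w\in W_P}q^{\ell(w)}$---making the normalization explicit rather than leaving it as bookkeeping; note that your equation ``$[\fF_\be]=\pi_\be\,\be_{cab}\,\pi_\be$'' only makes literal sense after $\res^{P\times P}_{B\times B}$, and there it holds up to exactly this factor $q_P$ (which then cancels against the $H^*(P_\be/B)$ you identify in your last paragraph).
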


In order to prepare for the proof, we show a pair of lemmata.  Let $\si_{cab}$ denote the cabling of $\si$ in the blackboard framing with multiplicities given by the colorings, thought of as colored with all 1's.
  \begin{lemma}\label{cable}
    We have an isomorphism of $P\times B$-equivariant sheaves
    \begin{equation*}
\res^{P\times P}_{P\times B}\fF_\si\cong \ind^{P\times B}_{B\times B}\fF_{\si_{cab}}.
\end{equation*}
\end{lemma}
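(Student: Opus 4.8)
The plan is to identify both sides as (twisted, shifted) constant sheaves on the same homogeneous space, after unwinding the equivariant-sheaf formalism. The right-hand side $\fF_{\si_{cab}}$ is built, by definition, from constant sheaves on big Bruhat-type cells associated to the elementary crossings making up the cable of $\si$; the left-hand side $\fF_\si$ is built the same way from the analogous cells, but over parabolics of larger block sizes. So the statement is really the compatibility of the ``cut each strand'' construction of $\F_L$ with the cabling operation, transported through the multiplication map $m:X_\si\to G_N$ and the various $\res$, $\ind$, and ${}^G_Hm_*$ functors introduced in Section~\ref{sec:mixed}.

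First I would spell out the geometry. On the $P\times P$ side, $\fF_\si = {}^{P^+_\si\times P^-_\si}_{G_\si}m_*\F_\si$ where $m$ is the product of the embeddings $\iota_v$; on the $B\times B$ side for the cabled braid, one has the same construction but with $G_{\si_{cab}}$ a product of copies of $\GL{1}$'s and $m_{cab}$ the product of the corresponding embeddings into $G_N$. The key point is that $X_{\si_{cab}}\to X_\si$ is naturally identified (the vertices of the cabled diagram, being crossings of 1-labeled strands, group into blocks matching the vertices of $\si$), the maps $m$ and $m_{cab}$ are intertwined by this identification, and the sheaves $\F_\si$ and $\F_{\si_{cab}}$ agree after the appropriate restriction of equivariance --- because a single $(i,j)$-crossing sheaf $j_{!/*}\uk_U\langle ij\rangle$ on $G_{i+j}$ is, up to the equivariant bookkeeping, the restriction of a product of 1-labeled crossing sheaves (this is essentially the content already used implicitly when the crossing sheaves were normalized; one can also see it by noting $U$ for the $(i,j)$-crossing is swept out by the corresponding big cells for the 1-labeled sub-crossings). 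Then I would invoke base change: ${}^{?}_{?}m_*$ commutes with $\res$ and $\ind$ along the relevant group homomorphisms (this is part of the formalism recalled from \cite{WWequ}), and the identity $\res^{P\times P}_{P\times B}\,{}^{P^+\times P^-}_{G_\si}m_* \cong {}^{P\times B}_{?}(\text{pushforward})\,\res$ reduces the claim to an identity on $X_\si = X_{\si_{cab}}$ before pushing forward, where it becomes the sheaf-level statement above.

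The one genuine subtlety is matching the $\ind^{P\times B}_{B\times B}$ on the right with the plain restriction on the left. The point is that the $B\times B$-equivariant sheaf $\fF_{\si_{cab}}$ already "sees" only the $B$-action on the cabled strands, whereas on $G_N$ the relevant strand carries a $\GL{i}$; passing from the $B$-equivariant to the $P$-equivariant picture on a space where $B$ acts freely on the $P/B$-directions is exactly what $\ind^{P\times B}_{B\times B}$ does, and it is compatible with $m_*$ by the quotient equivalence \eqref{quot-equiv}. Concretely, I expect to factor $m_{cab}$ through a $P/B$-bundle and apply the quotient equivalence to trade a $\res\circ{}^{\cdot}_\cdot m_*$ on the cable side for ${}^{\cdot}_\cdot m_*\circ\ind$ on the $\si$ side. **The hard part will be** keeping the equivariance groups and the maps between them straight through this chain of identifications --- there is no deep geometry, but the $\res$/$\ind$/${}^G_Hm_*$ bookkeeping for the many factors of $G_\si$ is where an error would hide; once the spaces and maps are correctly aligned, the isomorphism of sheaves is forced because each side is a constant sheaf on the same locally closed subset with the same shift and Tate twist, and these are rigid (no nontrivial automorphisms, by Gabber's theorem \ref{thm-gabber} and the function-sheaf dictionary of Proposition~\ref{funct-sheaf}).
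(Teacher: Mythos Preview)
The paper's proof is a one-liner: ``a straightforward induction on the length of $\si$; left to the reader.'' The intended argument is: for an elementary twist the claim reduces to the fact that, with $w$ the minimal $W_P$-coset representative realizing the block swap, one has $P\times_B BwB \cong PwB$, whence $\ind^{P\times B}_{B\times B}\,j_{!/*}\uk_{BwB}\cong j_{!/*}\uk_{PwB}$ with the correct shift (this is made explicit later in the paper, in the Reidemeister~I argument of Section~\ref{sec:proof-invariance:-gl}); the general case then follows from $\fF_{\be\be'}\cong\fF_\be\star\fF_{\be'}$ together with the compatibility of $\res$ and $\ind$ with convolution.

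Your final paragraph, where you describe $\ind^{P\times B}_{B\times B}$ as pushforward along a $P/B$-bundle via the quotient equivalence, is exactly the right mechanism for the base case. The problem is that your second paragraph contradicts it and contains a real error. You claim that $\F_\si$ and $\F_{\si_{cab}}$ ``agree after the appropriate restriction of equivariance,'' justified by ``$U$ for the $(i,j)$-crossing is swept out by the corresponding big cells for the $1$-labeled sub-crossings.'' This is false: there is indeed a natural map $X_{\si_{cab}}\to X_\si$ (multiply the cable crossings within each block), but under it the product of open $G_2$-cells maps to the Bruhat cell $BwB$, \emph{not} to the open $P\times P$-orbit $U$. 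The strict inclusion $BwB\subsetneq PwB\subset U$ is precisely the gap that $\ind^{P\times B}_{B\times B}$ closes, so it cannot be dismissed as ``equivariant bookkeeping''; nor do $\F_\si$ and the pushforward of $\F_{\si_{cab}}$ agree before inducing. If you discard the attempted identification of the $X$'s and instead run your $P/B$-bundle argument directly on $G_N$, one elementary twist at a time, concatenating via convolution, you recover exactly the induction the paper has in mind.
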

\begin{proof}
  The proof is a straightforward induction on the length of $\si$; left to the reader.
\end{proof}

Let $\la_\Bn$ be the partition given by arranging the parts of $\Bn$
in decreasing order, and let $\la_\Bn^t$ be its transpose.  Let
$\pi_\Bn$ be the projection in the Hecke
algebra to the representations indexed by Young diagrams less than
$\la_\Bn^t$ in dominance order.  Alternatively, if we identify
$\Hec_N$ with the endomorphisms of $V^{\otimes N}$ where $V$ is the
standard representation of $U_q(\mathfrak{sl}_m)$ for $m\geq n$, then
this is the projection to $\wedge^{i_1}V\otimes \cdots \otimes
\wedge^{i_n}V$.

Let $q_P=\sum_{W_P}q^{\ell(w)}$ be the Poincar\'e polynomial of the flag variety $P/B$. 

\begin{lemma}\label{proj}
  We have $[\res_{P\times B}^{B\times B}\ind^{P\times B}_{B\times B}\fF]=q_P\pi_P[\fF]$.
\end{lemma}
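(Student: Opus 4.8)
The plan is to compute the class of $\res^{B\times B}_{P\times B}\ind^{P\times B}_{B\times B}\fF$ directly at the level of Grothendieck groups, using the fact that $K^0(D^b_{B\times B}(G_N))\cong\Hec_N$ with convolution going to multiplication (the proposition attributed to \cite{KW}). First I would unwind the composite $\res^{B\times B}_{P\times B}\ind^{P\times B}_{B\times B}$. Induction from $B\times B$ to $P\times B$ followed by restriction back to $B\times B$ is, up to the quotient equivalence \eqref{quot-equiv}, convolution on the left with the constant sheaf on $P/B$ — more precisely, if $p:P\to P/B$ and we regard $P$ as a $B\times B$-space, then $\ind^{P\times B}_{B\times B}$ followed by $\For$-to-$B$ on the left amounts to $\underline{\fk}_{P}$-convolution in the first variable. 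Since $P$ is a union of the Bruhat cells $Bw_P B$ indexed by $w_P\in W_P$ (the Weyl group of the Levi), its constant sheaf has class $\sum_{w\in W_P}[\underline{\fk}_{BwB}]$ in the equivariant Grothendieck group, and each summand decategorifies to (a power of $q^{1/2}$ times) the standard Hecke basis element $T_w$.

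The key computation is then the identity, in $\Hec_N$,
\[
\Big(\sum_{w\in W_P}q^{(\ell(w_P^{\max})-\ell(w))/2}T_w\Big)\cdot(-) = q_P\,\pi_P\cdot(-),
\]
i.e. that this particular sum of Kazhdan--Lusztig-type elements is, after the normalization built into $q_P=\sum_{w\in W_P}q^{\ell(w)}$, exactly $q_P$ times the central idempotent $\pi_P$ projecting onto the isotypic components indexed by Young diagrams $\le\la_\Bn^t$ in dominance order. This is a standard fact about the Hecke algebra: $\sum_{w\in W_P}q^{\ell(w)/2}T_w$ acts as the ``symmetrizer'' for the parabolic $W_P$, and its image is the subspace of $W_P$-invariants, which under the Schur--Weyl identification $\Hec_N\cong\End(V^{\otimes N})$ is precisely $\wedge^{i_1}V\otimes\cdots\otimes\wedge^{i_n}V$; a scalar computation (the square of the symmetrizer is $q_P$ times itself) pins down the constant. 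I would cite this rather than reprove it, since the excerpt already invokes the identification of $\Hec_N$ with $\End_{U_q(\mathfrak{sl}_m)}(V^{\otimes N})$ and the description of $\pi_\Bn$ in exactly these terms.

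Finally I would assemble: $[\res^{B\times B}_{P\times B}\ind^{P\times B}_{B\times B}\fF] = [\underline{\fk}_P]\star[\fF]$ (suitably twisted/shifted), and the first factor decategorifies to the parabolic symmetrizer above, giving $q_P\pi_P[\fF]$ as required. The main obstacle, and the step I would be most careful about, is bookkeeping of grading shifts and twists: tracking how the normalizations $\langle\,\cdot\,\rangle$ in the definition of $\fF$, the shift built into the map $[j_*\underline{\fk}_{Bs_iB}]\mapsto q^{1/2}\si_i$, and the Poincar\'e polynomial $q_P$ interact, so that the powers of $q^{1/2}$ land correctly and the symmetrizer identity comes out with constant exactly $q_P$ rather than some other power of $q$. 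The underlying algebra — convolution with $\underline{\fk}_P$ is the parabolic symmetrizer, whose image is the relevant wedge-tensor subspace — is routine; it is the compatibility of conventions that requires attention.
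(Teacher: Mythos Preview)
Your approach is correct and close in spirit to the paper's, but the argument is organized differently. You identify $\res^{B\times B}_{P\times B}\ind^{P\times B}_{B\times B}$ directly as left convolution with $\uk_P$ and then invoke the Hecke-algebra identity $[\uk_P]=q_P\pi_P$. The paper instead first treats the extreme case $P=G$: using the quotient equivalence $D^+_{G\times B}(G)\cong D^+_B(\pt)$ it observes that $\res\,\ind\,\fF$ carries a filtration with subquotients $\mathbb{H}^i(\fF)\otimes\uk_G$, so its class is $\dim_q\hc(\fF)\cdot[\uk_G]$; it then cites the same classical fact $[\uk_G]=q_G\pi_G$ and remarks that the general $P$ follows by applying this blockwise on the Levi factors. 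Both routes rest on the same identification of $[\uk_P]$ in $\Hec_N$; yours is more direct, while the paper's filtration argument yields a bit more (the actual shape of the sheaf, not merely its class in $K^0$). One small wording slip to watch: you describe the element as the ``symmetrizer'' landing in ``$W_P$-invariants'', but in the paper's conventions $\pi_P$ projects onto $\wedge^{i_1}V\otimes\cdots\otimes\wedge^{i_n}V$, i.e.\ it is the sign-type idempotent for $W_P$ rather than the trivial one --- precisely the sort of sign/normalization bookkeeping you already flagged as the delicate point.
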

\begin{proof}
  First consider the case where $P=G$.  In this case, the sheaf $\res_{G\times
    B}^{B\times B}\ind^{G\times B}_{B\times B}\fF$ has a filtration
  whose successive quotients are of the form $\mathbb{H}^i(\fF)\otimes \uk_G$.  Thus
  we have
  \begin{equation*}
[\res_{G\times B}^{B\times B}\ind^{G\times B}_{B\times
    B}\fF]=\dim_q\hc(\fF)\cdot[\uk_{G}].
\end{equation*}
It is a classical fact that
  $[\uk_G]=q_G\pi_G$; here $\pi_G$ is just the projection to
  $\wedge^NV$.  This computation immediately extends to the general
  case.
\end{proof}
\begin{remark}
  This proposition shows why our approach works for colored HOMFLYPT
  polynomials, but would need to be modified to approach the HOMFLY
  polynomials for more general type A representations; we lack a good
  categorification of most of the projections in the Hecke algebra,
  but $\pi_P$ has a beautiful geometric counterpart.  This may be
  related to the fact that $\pi_P$ is the projection not just to a
  subrepresentation, but in fact to a cellular ideal in $\Hec_n$.
\end{remark}

\begin{proof}[Proof of Theorem~\ref{decat}]
Immediately from Lemmata \ref{cable} and \ref{proj}, we have the equality of Grothendieck classes $[\res^{P\times P}_{B\times B}\fF_\si]=q_P\pi_{P}[\fF_{\si_{cab}}]$.  Thus
\begin{align*}
  \Eul_P(G;\fF_\si)&=q^{-1}_P\Eul_B(G;\res^{P\times P}_{B\times B}\fF_\si)\\
     &=\Tr (q^{-1}_P[\res^{P\times P}_{B\times B}\fF_\si])\\
     &=\Tr (\pi_P[\fF_{\si_{cab}}])
\end{align*}
By the ``projection/cabling'' formula (see, for example, \cite[Lemma 3.3]{LZ}), this is precisely the colored HOMFLYPT polynomial.
\end{proof}

\section{The proof of invariance: $\GL2$}
\label{sec:invariant}

We first concentrate on the simpler case of $\GL2$ before attacking
the general case. In this case, we will obtain an invariant which
matches the HOMFLYPT homology of Khovanov-Rozansky \cite{KR05,Kho05},
so the section below can be thought of as a geometric proof of the
invariance of this homology theory.

Recall that if $\sigma$ is a braidlike diagram on $n$ strands
we described in Section \ref{sec:braids-sheav-groups} a map
\[
m : X_{\sigma} \to G_n
\] equivariant with respect to $\phi: G_{\sigma} \to T \times T$, where $T \times T$ acts on $G_n$ by left and right multiplication.
This map gives rise to a functor
\[ {}_{G_{\sigma}}^{B \times B} m_* : D^+_{G_{\Gamma}}(X_{\Gamma}) \to D^+_{T \times T}(G_n) \]
and we denoted the image of $\F_{\sigma}$ by $\fF_{\sigma}$. We saw
that this functor preserves weight filtrations.

Now suppose that $w$ is an element of the symmetric group on
$n$-letters (which we regard as permutation matrices in $G_n$) and
that $\sigma = \sigma_{i_1} \sigma_{i_2} \dots \sigma_{i_p}$ is a (positive)
braid in the standard generators corresponding to a reduced expression
$s_{i_1} \dots s_{i_p}$ for $w$.

It is straightforward to see that if we restrict $m$ to the open set
$\tilde{U}$ in $G_{\Gamma}$ consisting of tuples $(g_1, \dots , g_p)$
where each $g_i \in U$ (where $U$ denotes the open Bruhat cell
in $G_2$) then we may factor $m$ as
\begin{equation}
\label{map:RIIfactor}  \tilde{U} \to \tilde{U}/ \ker \phi \to G_n
\end{equation}
where the first map is a quotient by a free action, and the second map 
is an isomorphism.

Moreover, if we denote by $B$ the subgroup of upper triangular
matrices, then the image of the restriction of $m$ to $\tilde{U}$ is
contained in Schubert cell $BwB$. It follows that
\[ \Phi_{\sigma} = {j_w}_! \uk_{BwB}\langle \ell(w) \rangle. \]
(Here $j_w$ denotes the inclusion of the Bruhat cell $BwB$ into $G_n$).

\begin{prop}
  Theorem \ref{invariance} holds in the case where all strands are
  labeled by 1.
\end{prop}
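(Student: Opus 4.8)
The plan is to reduce the statement to Markov's theorem. By Section~\ref{sec:braids-sheav-groups} the chromatographic spectral sequence of $\F_L=\F_{\clo\be}$ coincides with that of $\fF_\be\in D^b_{T\times T}(G_n)$ computed for the conjugation action of the diagonal torus $T_\Delta$, so it suffices to check that, for each $i\ge 2$, the page $E_i$ of this spectral sequence is unchanged up to an overall grading shift when $\be$ is altered by (a) the defining relations of the braid group, (b) conjugation $\be\rightsquigarrow\al\be\al^{-1}$, and (c) a positive or negative stabilisation $\be\in B_n\rightsquigarrow\be\si_n^{\pm 1}\in B_{n+1}$.

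For (a), the compatibility $\fF_{\be\be'}\cong\fF_\be\star\fF_{\be'}$ together with associativity of $\star$ reduces everything to relations among the generating sheaves $\fF_{\si_i^{\pm 1}}$, which up to Tate twist are the standard and costandard objects $\Delta_{s_i},\nabla_{s_i}$ in the $B\times B$-equivariant Hecke category on $G_n$. The positive braid relations and far commutativity are immediate from the identity $\fF_\sigma\cong (j_w)_!\uk_{BwB}\langle\ell(w)\rangle$ established just above: a positive braid word reduced for $w\in S_n$ yields a sheaf depending only on $w$, and each positive braid relation pairs two reduced words for the same $w$. The relation $\si_i\si_i^{-1}=\si_i^{-1}\si_i=1$ is the standard fact $\Delta_{s_i}\star\nabla_{s_i}\cong\nabla_{s_i}\star\Delta_{s_i}\cong\delta_e$ ($\delta_e$ the unit of convolution), and the relations among the remaining negative and mixed words follow by Verdier duality, which exchanges $\Delta$ and $\nabla$ and reverses $\star$. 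Hence $\fF_\be$ is a well-defined object depending only on $\be\in B_n$, and so is its chromatographic spectral sequence, up to the $E_1$-level homotopy ambiguity already noted in Section~\ref{sec:mixed}.

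For (b), I would pass to the bimodule picture of Section~\ref{sec:bimodules}, where the full chromatographic spectral sequence of $\clo\be$ is identified with $\H^i(\HH^j(F(\be)))\Rightarrow\H^{i+j}(R_\Bn\Lotimes_{R_\Bn\otimes R_\Bn}F(\be))$ computing Hochschild homology of the bimodule complex $F(\be)$. Since $F(\al\be)\cong F(\al)\otimes_{R}F(\be)$ and Hochschild homology is cyclically invariant --- the rotation quasi-isomorphism $F(\al)\otimes F(\be)\to F(\be)\otimes F(\al)$ commutes with the Hochschild differential and preserves the weight filtration --- the whole spectral sequence is unchanged under $\be\rightsquigarrow\al\be\al^{-1}$, with no shift.

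Part (c) is where I expect the real work to be. For $\be\in B_n\subset B_{n+1}$ one has $\fF_{\be\si_n}\cong\fF_\be\star\fF_{\si_n}$ on $G_{n+1}$, and the claim is that the spectral sequence computing $\hc_{T_\Delta}(G_{n+1};\fF_{\be\si_n})$ agrees, up to an overall shift, with the one computing $\hc_{T_\Delta}(G_n;\fF_\be)$. I would realise this as a geometric version of Khovanov's stabilisation computation: once closed up, the ``extra'' strand is integrated out through a correspondence in which the relevant map factors as an affine bundle along which $\fF_{\be\si_n}$ is smooth, followed by a proper map. By the results of Section~\ref{sec:mixed} (affine bundles preserve weight filtrations; proper pushforward preserves the global chromatographic complex up to homotopy), this reduces the comparison to a Koszul-type cancellation: the chromatographic complex of $\fF_{\si_n}$ has only two terms, and the pushforward collapses the contribution of the extra strand to a single grading shift. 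Negative stabilisation is handled identically with a different overall shift, the discrepancy being exactly the writhe dependence already present in the HOMFLYPT polynomial. Making the ``integrate out a strand'' correspondence and the smoothness and properness claims precise is the heart of the argument, and is carried out for general colourings in Section~\ref{sec:proof-invariance:-gl}, of which the present case is a specialisation. Combining (a)--(c) with Markov's theorem shows that, for $i\ge 2$, the page $E_i$ of the chromatographic spectral sequence of $\F_{\clo\be}$ depends only on the link $\clo\be$ up to an overall grading shift, which is the assertion of Theorem~\ref{invariance} in the all-1's case.
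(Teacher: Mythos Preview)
Your parts (a) and (b) are reasonable and take a somewhat different route from the paper: the paper proves Reidemeister~IIa and III by explicit geometric computation with the multiplication map $m:X_\sigma\to G_n$, whereas you invoke the standard Hecke-category facts $\Delta_{s_i}\star\nabla_{s_i}\cong\delta_e$ and the reduced-word description of $\fF_\sigma$. That is fine provided you are willing to cite those results externally; the paper instead derives them from scratch. (One small caution: Verdier duality does not literally ``reverse $\star$'' here, since the relevant pushforward is proper; the negative relations follow more directly by conjugating the positive argument with $\D$.)

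The genuine gap is in (c). You defer the stabilisation computation to Section~\ref{sec:proof-invariance:-gl}, claiming the all-1's case is a specialisation of the general argument there. This is circular: the proof in Section~\ref{sec:proof-invariance:-gl} repeatedly \emph{uses} the all-1's invariance as input, reducing the coloured Reidemeister moves to their cablings and then invoking the present proposition (see the phrases ``applying the invariance for the all 1's labeling to the cable'' and ``Applying the invariance result for labelings all with 1's''). So you cannot appeal to it here.

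The stabilisation step therefore needs an independent argument, and this is exactly where the paper does real work. The key construction you are missing is the determinant map $d:G_2\to G_1$, equivariant for the projection $\phi:G_1^3\to G_1^2$ forgetting the internal edge. One analyses the distinguished triangle $\uk_X\langle 1\rangle\to b_*\uk_{BsB}\langle 1\rangle\to a_*\uk_B(-\nicefrac{1}{2})\triright$ (the weight filtration on the crossing sheaf) under ${}^{G'}_G d_*$, computing the two pushforwards separately: the open cell contributes $\uk_{X'}$ because $\ker\phi$ acts freely on the fibre, while the closed cell contributes a copy of $H^\bullet(\P^\infty)\otimes H^\bullet(G_1)\otimes\uk_{X'}$. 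A vanishing of $\Hom$ in negative degrees then forces the connecting map to be zero, so the induced filtration on $\uk_{X'}$ is trivial up to an overall shift. Your sketch of an ``affine bundle followed by a proper map'' and a ``Koszul-type cancellation'' does not capture this: the determinant map is neither, and the mechanism is a degree argument on the connecting morphism rather than a Koszul collapse. Without this computation (or an equivalent one), part (c) is not established and the proof is incomplete.
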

\begin{proof}
As usual with proofs that knot invariants defined in terms of a
projection are really invariants, we check that our description is
unchanged by the Reidemeister moves.  Since we only consider closed
braids, we only need to check Reidemeister II and III in the
braid-like case, when all strands are coherently oriented. Those who
prefer to use the Markov theorem can consider the proof of
Reidemeister I as a proof of the Markov 1 move, and the Reidemeister
II and III calculations as proving the independence of the
presentation of our braid in terms of elementary twists {\it and} of
the Markov 2 move (which only uses Reidemeister IIa).

In each case, we will use the fact that while we wish to compare the
pushforwards of sheaves corresponding to diagrams $L$ and $L^{\prime}$ on 
from $X_L/G_L$ and $X_{L^{\prime}}/G_{L^{\prime}}$ to
a point, we can accomplish this by showing that their pushforwards by
any pair of maps to any common space coincide.  Being able to
use these techniques is one of the principal advantages of a
geometric definition over a purely algebraic one.

\emph{Reidemeister I}: Consider the following tangles:
\begin{equation}\label{reid1}
D = \begin{array}{c} \reflectbox{{\includegraphics[totalheight=2.2cm]{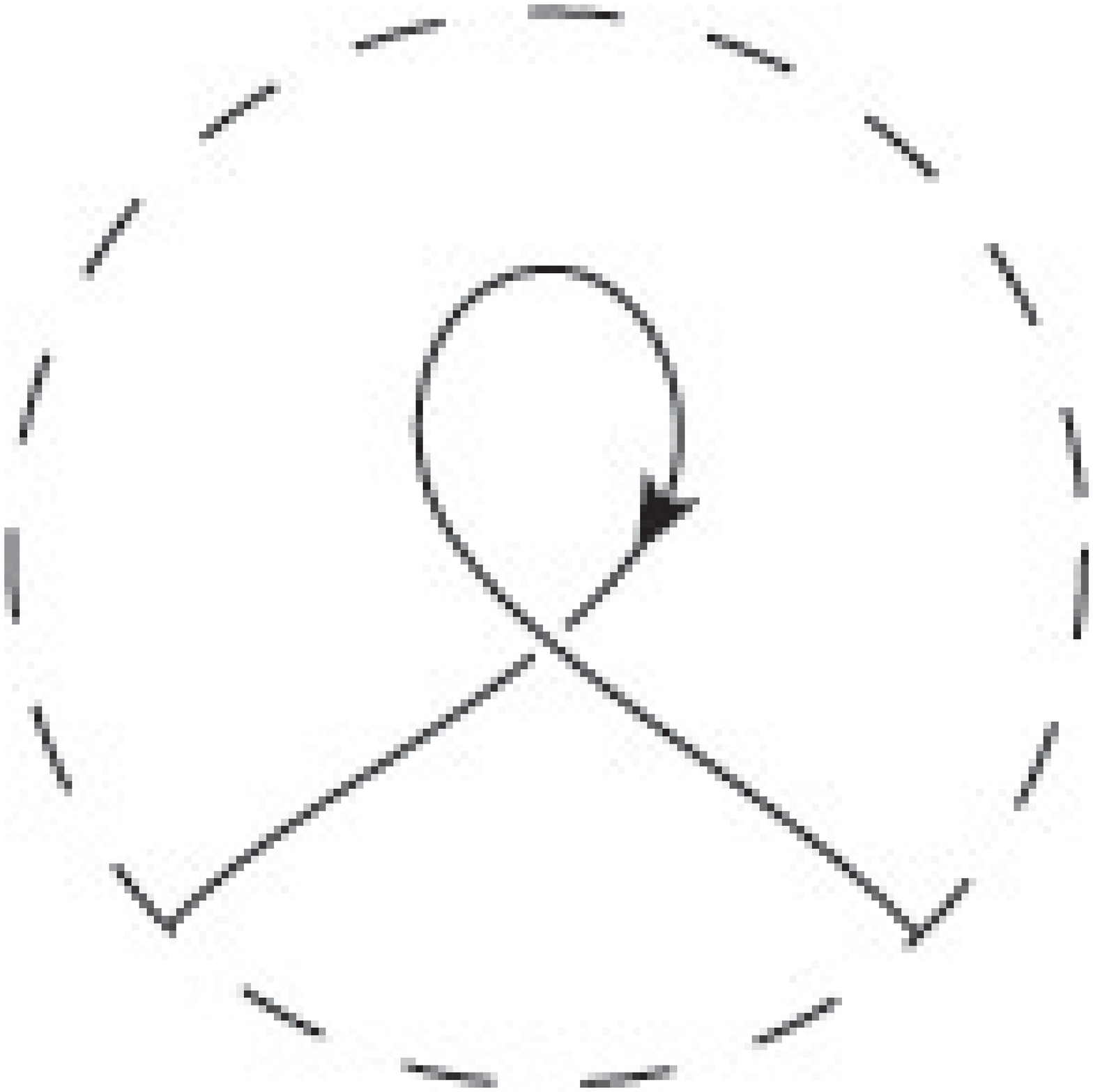} }}
\end{array}
\qquad D^{\prime} =  \begin{array}{c} \reflectbox{{\includegraphics[totalheight=2.2cm]{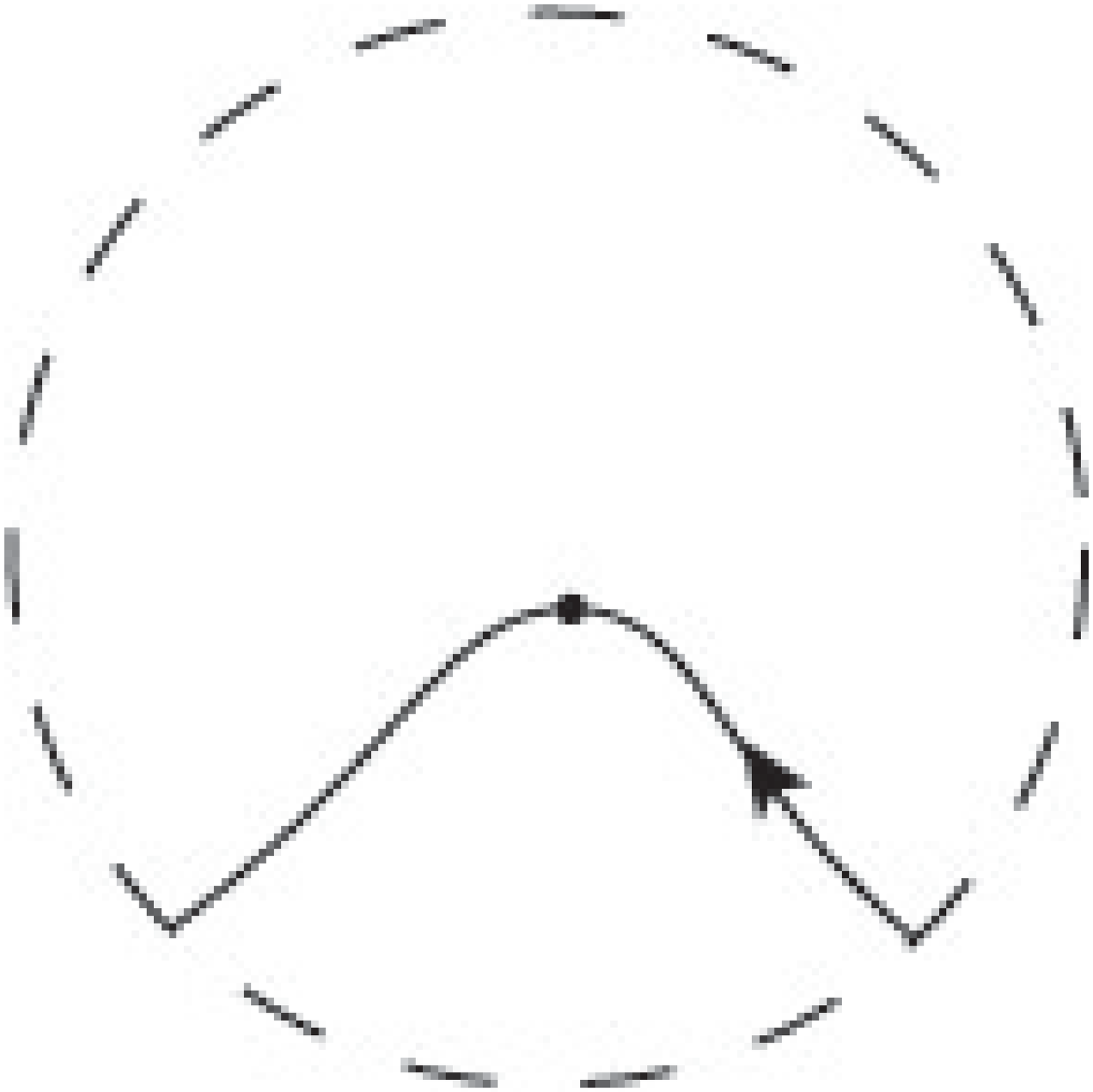} }}\end{array}.
\end{equation}
To simplify notation we denote the associated varieties $X$,
$X^{\prime}$ and groups $G$, $G^{\prime}$ respectively. We have $X =
G_2$ and $X^{\prime} = G_1$, $G = G_1^3$ and $G^{\prime} = G_1^2$. The
determinant gives a map
\begin{equation*}
d : X \to X^{\prime}
\end{equation*}
which is equivariant with respect to the map $\phi : G \to G^{\prime}$
forgetting the factor corresponding to the internal edge. Reidemeister I will result from an
isomorphism
\begin{equation*}
{}^{G^{\prime}}_G d_* \F_D \cong \F_{D^{\prime}}
\end{equation*}
compatible with the weight filtrations on both sheaves. Note that the
weight filtration on $\F_{D^{\prime}}$ is trivial, whereas that on
$\F_D$ is not.

Let $B \stackrel{a}{\hookrightarrow} X \stackrel{b}{\hookleftarrow}
BsB$ be the decomposition of $X = G_2$ into its two Bruhat cells. We
have an distinguished triangle
\[ a_!a^! \uk_X \langle 1 \rangle \to \uk_X\langle 1 \rangle \to b_*b^* \uk_X\langle 1 \rangle \triright \]
turning the triangle gives the weight filtration on $b_*\uk_{BsB}\langle 1 \rangle$:
\begin{equation} \label{eq:triRI} \uk_X\langle 1 \rangle \to b_* \uk_{BsB}\langle 1 \rangle \to
a_*\uk_B(-\nicefrac{1}{2})
\triright . \end{equation}
In the following we analyze the effect of ${}^{G^{\prime}}_Gd_*$ on this triangle.

The restriction of $d$ to $BsB \subset X$ is a trivial $G_1 \times
\bA^2$-bundle over $X^{\prime}$. One may easily check that $\ker \phi$
acts freely on the multiplicative group in the fiber. It follows that
\[ {}^{G^{\prime}}_Gd_* b_* \uk_{BsB} \cong \uk_{X^{\prime}}. \] On
the other hand, the restriction of $d$ to $B \subset X$ yields a
trivial $G_1 \times \bA^1$ bundle, with $\ker \phi$ only acting on
$\bA^1$. It follows that
\[ {}^{G^{\prime}}_Gd_* a_* \uk_B = H^{\bullet}(\P^{\infty}) \otimes H^{\bullet}(G_1) \otimes \uk_{X^{\prime}}. \] 

Applying ${}^{G^{\prime}}_Gd_*$ to \eqref{eq:triRI} and using the above isomorphisms we obtain
\begin{equation*} {}^{G^{\prime}}_G d_* \uk_{X}\langle 1 \rangle \to
  \uk_{X^{\prime}} \langle 1 \rangle \to H^{\bullet}(\P^{\infty})
  \otimes H^{\bullet}(G_1) \otimes \uk_{X^{\prime}}(-\nicefrac{1}{2})
  \triright. \end{equation*} As $\Hom(\uk_{X^{\prime}},
\uk_{X^{\prime}} [i]) = H^i_{G^{\prime}} (X^{\prime})$ is zero for $i
< 0$ we conclude that the second arrow above is zero. Hence the
filtration on $\uk_{X^{\prime}}$ may be taken to be trivial (and
therefore agrees with that on $\F_{D^{\prime}}$ up to $\langle 1
\rangle$).

\emph{Reidemeister IIa}:
Here we are concerned with the two tangles:
\begin{equation*}
D = \begin{array}{c} \reflectbox{
{\includegraphics[totalheight=2.2cm]{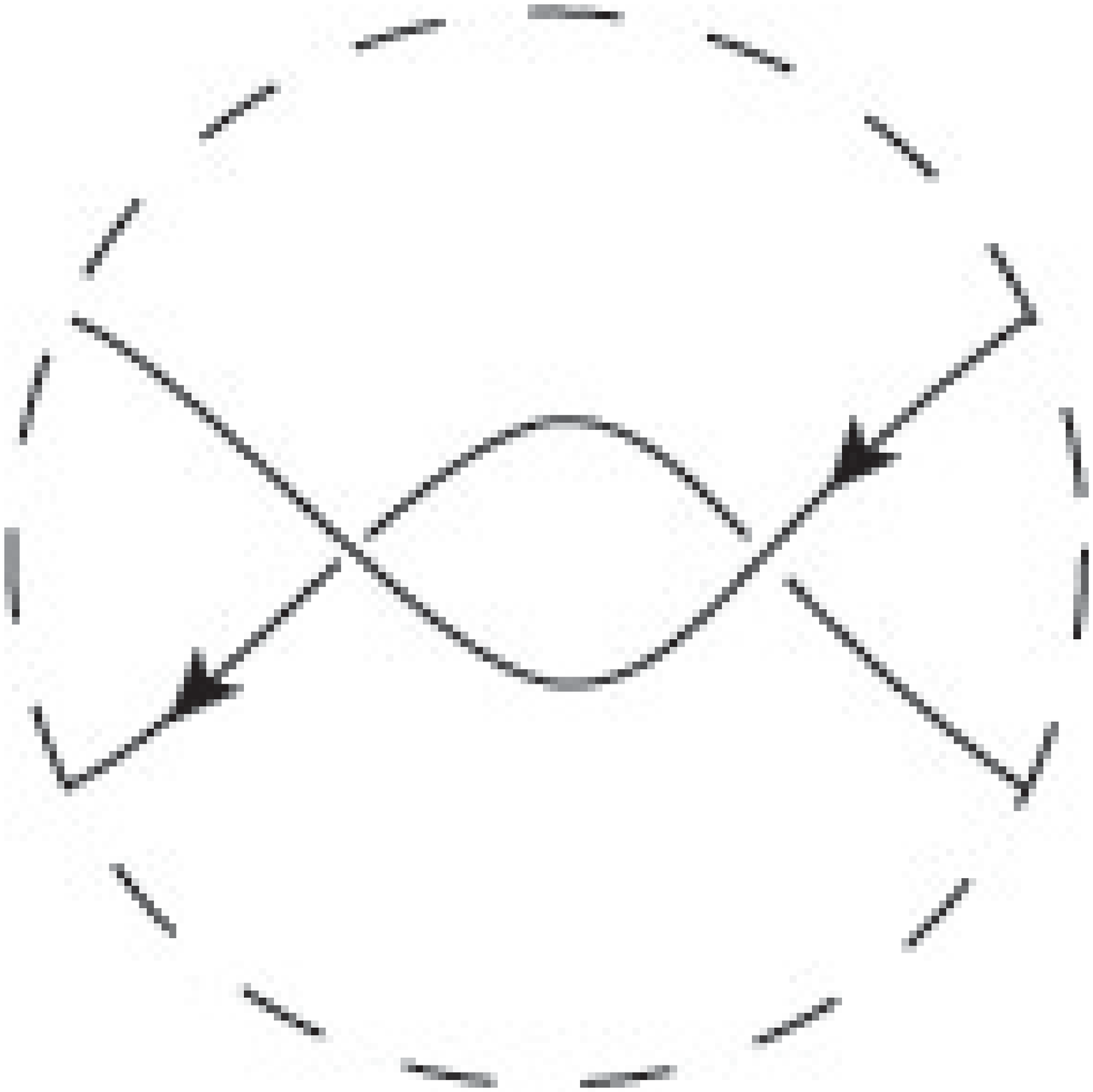}} }\end{array}
\qquad D^{\prime} =  \begin{array}{c} \reflectbox{{\includegraphics[totalheight=2.2cm]{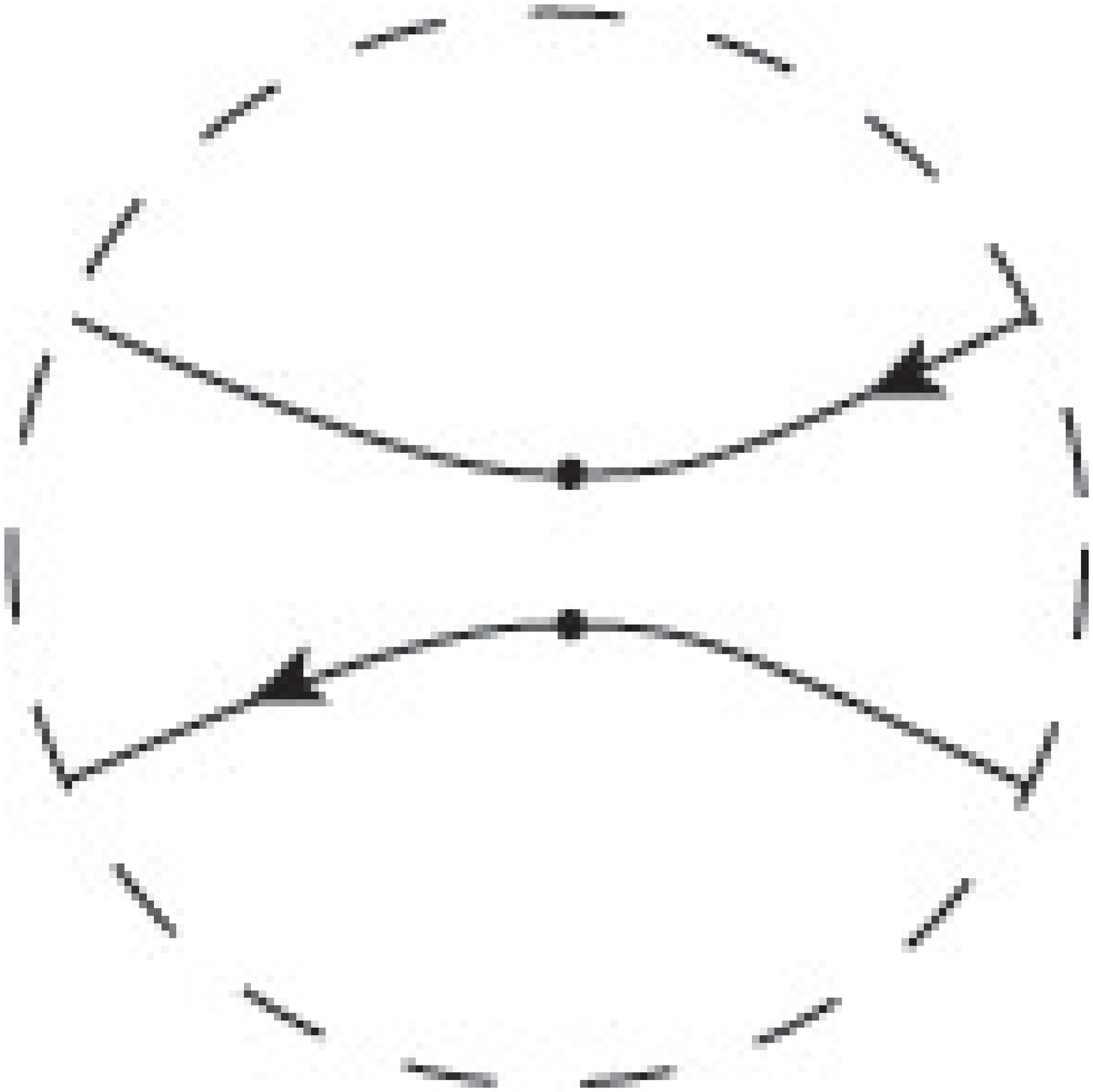}} }\end{array}.
\end{equation*}
We denote the associated varieties and groups $X, X^{\prime}, G,
G^{\prime}$.  We denote by $m$ the multiplication map $X \to G_2$
considered at the start of this section. We regard $X^{\prime}$ as the diagonal matrices inside
$G_2$.

We have seen that ${}^{G^{\prime}}_Gm_*$ preserves weight filtrations, and hence we may ignore weight filtrations when comparing ${}^{G^{\prime}}_Gm_* \F_D$ and $\F_{D^{\prime}}$. The map $B \to X^{\prime}$ forgetting the off-diagonal entry is acyclic, and therefore it is enough to show that ${}^{G^{\prime}}_Gm_* \F_D \cong \uk_B$.

We decompose $G_2$ into its Bruhat cells $B \stackrel{a}{\hookrightarrow} G_2 \stackrel{b}{\hookleftarrow} BsB$ as before. We claim we have isomorphisms:
\begin{align} \label{RIIa}
{}^{G^{\prime}}_Gm_*(a_* \uk_B \boxtimes b_! \uk_{BsB}) & \cong b_! \uk_{BsB} \\
\label{RIIb} {}^{G^{\prime}}_Gm_*(\uk_G \boxtimes a_* \uk_B) & \cong \uk_G \\
\label{RIIc} {}^{G^{\prime}}_Gm_*(\uk_G \boxtimes \uk_G) & \cong \uk_G \oplus \uk_G\langle -2 \rangle \\
\label{RIId} {}^{G^{\prime}}_Gm_*(\uk_G \boxtimes b_! \uk_{BsB}) & \cong \uk_G \langle -2 \rangle
\end{align}
(As always we regard the exterior tensor product of equivariant sheaves on $G_2$ as an equivariant sheaf on $X$ via restriction.)

Indeed, \eqref{RIIa} and \eqref{RIIb} follow from the fact that the
restriction of $m$ to $B \times G$ or $G \times B$ is a trivial
$B$-bundle, with $\ker \phi$ acting freely on the multiplicative
groups in the fiber. The factorization \eqref{map:RIIfactor} of $m$ as
``essentially a $\P^1$-bundle'' implies \eqref{RIIc}. Then
\eqref{RIId} follows from the others by taking the exterior tensor
product of $\uk_G$ with the distinguished triangle $b_! \uk_{BsB} \to
\uk_G \to a_* \uk_B \to$ and applying ${}^{G^{\prime}}_Gm_*$.

Now $B$ is smooth of codimension 1 inside $G_2$ so $a^! \uk_G = \uk_B
\langle -2 \rangle$ and we have an exact triangle
\[ a_* \uk_B \langle -2 \rangle \to \uk_G \to b_* \uk_{BsB} \triright. \]
Taking the exterior tensor product with $b_!\uk_{BsB}$,  applying ${}^{G^{\prime}}_G m_*$ and using the above isomorphisms we obtain a distinguished triangle
\begin{equation} \label{RII:tri}
 b_! \uk_{BsB}\langle -2 \rangle \to \uk_G \langle -2 \rangle \to 
{}^{G^{\prime}}_Gm_*(b_* \uk_{BsB} \boxtimes b_! \uk_{BsB} ) \triright \end{equation}
Note that $\Hom( b_! \uk_{BsB}, \uk_G)$ is one dimensional and contains the adjunction morphism $b_!b^! \uk_{G} \to \uk_G$. By considering its dual, one may show that the first arrow in \eqref{RII:tri} is non-zero. It follows that this arrow is the adjunction morphism (up to a non-zero scalar) and we have an isomorphism:
\[ {}^{G^{\prime}}_Gm_*(b_* \uk_{BsB} \boxtimes b_! \uk_{BsB} ) \cong \uk_B \langle -2 \rangle \]
Finally note that by definition $\F_D$ is $b_* \uk_{BsB} \boxtimes b_! \uk_{BsB} \langle 2 \rangle$ and so
 \[ {}^{G^{\prime}}_Gm_* \F_D \cong \uk_B \]
which finishes the proof of invariance under Reidemeister II.

\emph{Reidemeister III}: This follows immediately from the considerations at the beginning of this section. Indeed, if $\sigma$ and $\sigma^{\prime}$ are the diagrams corresponding to the words $\sigma_1 \sigma_2 \sigma_1$ and $\sigma_2\sigma_1 \sigma_2$ we have maps
\[ X_{\sigma} \stackrel{m}{\to} G_3 \stackrel{m^{\prime}}{\leftarrow} X_{\sigma^{\prime}} \]
and
\[{}_{G_{\sigma}}^{T \times T} m_* \F_{\sigma} 
\cong j_{w_0} \uk_{Bw_0B} \cong
 {}_{G_{\sigma}}^{T \times T}m^{\prime}_* \F_{\sigma^{\prime}} \]
(here $w_0$ indicates the longest element in $S_3$).
\end{proof}

\section{The proof of invariance: $\GL n$}
\label{sec:proof-invariance:-gl}

Now, we expand to the full case of all possible positive integer labels.

\begin{proof}[Proof of Theorem \ref{invariance}]
  All of the Reidemeister moves can simply be reduces to the
  corresponding statement for the cabling with the all 1's labeling.
  Interestingly, the same trick was used in \cite{MSV} to prove
  invariance in a special case.  Almost certainly our proof could be
  rephrased in a purely algebraic language like their paper, though at
  the moment it is unclear how.

  \emph{Reidemeister IIa \& III}:\, Here we need only establish the
  isomorphisms of $P\times P$-equivariant sheaves
\begin{equation*}
  \fF_{\si_i}\star\fF_{\si_i^{-1}}\cong \uk_{P}\hspace{.5in}
  \fF_{\si_i}\star\fF_{\si_{i+1}}\star\fF_{\si_i}\cong\fF_{\si_{i+1}}\star\fF_{\si_i}\star\fF_{\si_{i+1}}
\end{equation*}
Lemma~\ref{cable} implies that these hold as $P\times B$ equivariant
sheaves, applying the invariance for the all 1's labeling to the
cable.  

In fact, both are the $*$-inclusion of a local system on a $P\times
P$-orbit: $P$ itself in first case, the $P\times P$ orbit of the
permutation corresponding to the cabling of $\si_i\si_{i+1}\si_i$.
Since the stabilizer of any point under $P\times P$ is connected, any
$P\times B$ equivariant local system on an orbit has at most one
$P\times P$ equivariant structure, and this equality holds as $P\times
P$ equivariant sheaves.

{\em Reidemeister I:} We again use the ``cabling/projection''
philosophy, but this argument requires a bit more subtlety.  We are
interested in the chromatographic complex of a single crossing with
its right ends capped off, that is, the tangle projection denoted by
$D$ in (\ref{reid1}).  To construct the sheaf $\F_D$, we take
$U\subset G_{2n}$, as defined in (\ref{u-def}), and consider
$j_*\uk_U$ or $j_!\uk_U$, depending on whether our crossing is
positive or negative.  These cases are Verdier dual, and the proofs of
invariance are essentially identical, so we will treat the positive
case, and only note where the negative differs.

We consider the action on $G_{2n}$ of $G_{n,n}$ on the left {\it and}
the right.  By convention, we let $G_n^1$ denote the first copy of
$G_n\subset G_{n,n}$ and $G^2_n$ the second.  As before, we let $T_n$
be diagonal matrices in $G_n$, and we use $T^1_n,T^2_n$ for the
inclusions into the two factors.  We let $G^{1,1,2}_{n,n,n}$ denote
$G_n^1\times G_n^1\times (G_n^2)_\Delta$, that is, the left and right
action of $G^1_n$, and the conjugation action of $G^2_n$.

In order to prove the theorem, what we must do is consider the
$G^{1,1,2}_{n,n,n}$-equivariant global chromatographic complex of
$\F_D$ as a $H^*(BG_n^1)$-bimodule, and show that it
matches that of an untwisted strand (the diagram denoted $D'$ in
(\ref{reid1})).

 Note that for any $G_n$ sheaf $\F$ on any $G_n$-space $X$, the inclusion of the symmetric group as permutation matrices normalizing $T_n$ gives an action of $S_n$ on $\hc_{T_n}(X;\res^{G_n}_{T_n}\F)$.

\begin{lemma}\label{localize}
  The natural transformation of functors
  \begin{equation*}
\hc_{G^{1,1,2}_{n,n,n}}(G_{2n}; -)\to \hc_{G^{1,1}_{n,n}\times T^2_n}(G_{2n}; \res^{G^{1,1,2}_{n,n,n}}_{G^{1,1}_{n,n}\times T^2_n}-)
\end{equation*}
is the inclusion of the $S_n$-invariants for the permutation action on $T^2_n$.
\end{lemma}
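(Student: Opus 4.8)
The plan is to reduce this to the standard fact that for a connected reductive group $\mathcal{G}$ with maximal torus $\mathcal{T}$ and Weyl group $W = N_{\mathcal{G}}(\mathcal{T})/\mathcal{T}$ acting on a variety $Y$, and any $\mathcal{G}$-equivariant sheaf $\F$ on $Y$, the natural restriction map
\[
\hc_{\mathcal{G}}(Y;\F) \longrightarrow \hc_{\mathcal{T}}(Y; \res^{\mathcal{G}}_{\mathcal{T}}\F)
\]
is injective with image exactly the $W$-invariants, where $W$ acts through the action of $N_{\mathcal{G}}(\mathcal{T})$ as in the remark preceding the lemma. Since $\fk$ has characteristic $0$ this is a Leray--Hirsch argument for the $(\mathcal{G}/\mathcal{T})$-bundle $E\mathcal{G}\times_{\mathcal{T}}Y \to E\mathcal{G}\times_{\mathcal{G}}Y$: the classes pulled back from $H^*(B\mathcal{T})$ surject onto $H^*(\mathcal{G}/\mathcal{T})$, which is even, so $\hc_{\mathcal{T}}(Y;\res\F)$ is free over $\hc_{\mathcal{G}}(Y;\F)$ on a lift of any $\fk$-basis of $H^*(\mathcal{G}/\mathcal{T})$, with $W$ acting through its standard action on $H^*(\mathcal{G}/\mathcal{T})$; as $H^*(\mathcal{G}/\mathcal{T})^W = \fk$ and $|W|$ is invertible, taking $W$-invariants returns $\hc_{\mathcal{G}}(Y;\F)$ (cf. \cite{BL}).

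To apply this I would write $G^{1,1,2}_{n,n,n} = K\times\mathcal{G}$ with $K = G^{1,1}_{n,n}$ (the left and right multiplication actions of the first $\GL n$-factor) and $\mathcal{G} = (G^2_n)_\Delta \cong \GL n$ (the conjugation action of the second factor, with maximal torus $T^2_n$ and Weyl group $S_n$ acting by permutation matrices); the $K$- and $\mathcal{G}$-actions on $G_{2n}$ commute because they come from complementary diagonal blocks. Forming the Borel construction for $K$ --- replacing $G_{2n}$ by $EK\times_K G_{2n}$, approximated as usual by smooth finite-dimensional varieties, on which $\mathcal{G}$ still acts and to which $\F$ descends --- one has $E(K\times\mathcal{G})\times_{K\times\mathcal{G}}G_{2n} \cong E\mathcal{G}\times_{\mathcal{G}}(EK\times_K G_{2n})$, so that $\hc_{G^{1,1,2}_{n,n,n}}(G_{2n};-)$ becomes $\hc_{\mathcal{G}}$ of this $\mathcal{G}$-variety and $\hc_{G^{1,1}_{n,n}\times T^2_n}(G_{2n};\res\,-)$ becomes $\hc_{T^2_n}$ of it, with the natural transformation of the lemma corresponding exactly to the restriction map of the previous paragraph. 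Since the resulting $W = S_n$-action is the one induced by $N_{\mathcal{G}}(T^2_n)$, i.e. by the permutation matrices acting on $\hc_{T^2_n}$, it coincides with the $S_n$-action of the remark preceding the lemma, and the statement follows.

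The step requiring the most care --- though it is not deep --- is this reduction: one must verify within the mixed equivariant formalism of \cite{WWequ} that the hypercohomologies and the functor $\res^{G^{1,1,2}_{n,n,n}}_{G^{1,1}_{n,n}\times T^2_n}$ factor through the $K$-Borel construction compatibly and naturally in $\F$, and that the approximation by finite-dimensional varieties is harmless in a range of degrees. Given the standard compatibilities of $\res$, $\ind$ and the quotient equivalence recalled in Section~\ref{sec:mixed} this is routine; the only genuinely necessary external input is the characteristic-zero identity $\hc_{\mathcal{G}} = \hc_{\mathcal{T}}^{W}$ used above.
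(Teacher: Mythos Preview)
Your proposal is correct and is essentially the same argument the paper has in mind: the paper's proof is the single sentence ``This is the abelianization theorem for equivariant cohomology,'' and what you have written is precisely an unpacking of that theorem in this product situation, first Borel-constructing away the $K=G^{1,1}_{n,n}$ factor and then applying the standard identity $\hc_{\mathcal G}=(\hc_{\mathcal T})^{W}$ for $\mathcal G=(G^2_n)_\Delta$. There is no substantive difference in approach, only in level of detail.
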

\begin{proof}
  This is the abelianization theorem for equivariant
  cohomology.
\end{proof}

Let $\hat U$ be the Bruhat cell $Bw^{n,n}_{2n}B$ where $w^{n,n}_{2n}$ is the permutation which switches $i$ and $i\pm n$, and let $\hat j$ be its inclusion to $G_{2n}$.  We note that $\hat j_*\uk_{\hat U}$ is $\fF_{\si}$ where $\si$ is the braid given by the $n$-cabling of a single crossing:
\begin{equation*}
  \xy 0;/r.15pc/: 
   (0,30)="t1"; (5,30)="t2"; (25,30)="t3"; (30,30)="t4"; 
(50,30)="t5"; (55,30)="t6"; (75,30)="t7"; (80,30)="t8";
   (0,0)="b1"; (5,0)="b2"; (25,0)="b3"; (30,0)="b4";
   (50,0)="b5"; (55,0)="b6"; (75,0)="b7"; (80,0)="b8";
   "b1"; "t5" **\dir{-}; "b2"; "t6" **\dir{-}; "b3"; "t7" **\dir{-};
   "b4"; "t8" **\dir{-};  "b5"+(-8,4.8); "b5" **\dir{-}; "b5"+(-20,12); "b5"+(-15,9) **\dir{-}; "t1";"t1"+(23,-13.8) **\dir{-};
 "b6"+(-10.5,6.3); "b6" **\dir{-}; "b6"+(-22.5,13.5); "b6"+(-17.5,10.5) **\dir{-}; "t2";"t2"+(20.5,-12.3) **\dir{-};
 "b7"+(-20.5,12.3); "b7" **\dir{-}; "b7"+(-32.5,19.5); "b7"+(-27.5,16.5) **\dir{-}; "t3";"t3"+(10.5,-6.3) **\dir{-};
"b8"+(-23,13.8); "b8" **\dir{-}; "t4"+(15,-9); "t4"+(20,-12) **\dir{-}; "t4";"t4"+(8,-4.8) **\dir{-};
(22.5,5)*{\cdots};(57.5,5)*{\cdots}; (22.5,25)*{\cdots};(57.5,25)*{\cdots};
(15,-6)*{\underbrace{\hspace{70pt}}_{n\text{ strands}}};(65,-6)*{\underbrace{\hspace{70pt}}_{n\text{ strands}}};
\endxy
\end{equation*}
\begin{lemma}
  The $G_{n,n}^{1,1}\times T_n^2$-equivariant global
  chromatographic complex of $j_*\uk_{U}$ is isomorphic to the
  $T_{n,n}^{1,1}\times T_n^2$-equivariant for $\hat j_*\uk_{\hat U}$,
  with the bimodule structure restricted to $H^*(BG_{n,n}^{1,1})\subset
  H^*(BT_{n,n}^{1,1})$.
\end{lemma}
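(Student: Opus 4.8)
The plan is to recognize this lemma as a block-wise instance of the cabling isomorphism of Lemma~\ref{cable}, combined with the abelianization theorem already used in Lemma~\ref{localize}. As recorded in the paragraph above, up to the common twist $\langle n^2\rangle$ the sheaf $j_*\uk_U$ is the sheaf $\fF_\si$ attached to the single positive $(n,n)$-crossing $\si$, while $\hat j_*\uk_{\hat U}$ is $\fF_{\si_{cab}}$, the sheaf attached to its $n$-cable $\si_{cab}$ (the negative crossing is Verdier dual and is handled identically). The two reductions we must carry out — restricting the left and the right $G_n^1$-actions to $T_n^1$ and replacing $\bar U$ by the Schubert cell $\hat U$ — are exactly what cabling of the $G_n^1$-block accomplishes; the $G_n^2$-block has already been cut down to its torus $T_n^2$ in Lemma~\ref{localize} and is left untouched. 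The key structural fact is that restriction between equivariant groups preserves weight filtrations, and that the relevant induction functors are proper pushforwards (after passing to the quotient they are realized by a proper $G_n/B_n$-bundle), hence preserve purity and weight filtrations as well; consequently both commute, up to canonical isomorphism, with the formation of the global chromatographic complex. It therefore suffices to establish the identification at the level of the shifted perverse sheaves together with their weight filtrations, and then apply $\hc$.

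Concretely, applying Lemma~\ref{cable} to the $G_n^1$-block — using the homotopy equivalence $P\simeq G$ so as to phrase it for $G_{n,n}$ rather than $P_{n,n}$ — on both the left and the right, one obtains a chain of isomorphisms, compatible with weight filtrations, which identifies $\res^{G^{1,1}_{n,n}\times T_n^2}_{B^1_n\times B^1_n\times T_n^2}(j_*\uk_U)$ with the induction of $\hat j_*\uk_{\hat U}$ (taken with its $T^{1,1}_{n,n}\times T_n^2$-equivariance) from Borel- to $G_n^1$-equivariance on either side. Geometrically this rests on the identity
\[
U \;=\; P_{n,n}\,w^{n,n}_{2n}\,P_{n,n} \;=\; G_{n,n}\cdot\hat U\cdot G_{n,n},
\]
i.e.\ the big cell is swept out from the Schubert cell $\hat U = B\,w^{n,n}_{2n}\,B$ by the flag-variety directions on the two sides. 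Now take $\hc$: since $B^1_n$ retracts onto $T^1_n$ and $\hc_{G}\circ\ind^{G}_{B}=\hc_{B}$, which carries the natural $H^*(BT)$-module structure on $\hc_B(-)$ to its restriction along $H^*(BG)\hookrightarrow H^*(BT)$, we conclude that the $G^{1,1}_{n,n}\times T_n^2$-equivariant global chromatographic complex of $j_*\uk_U$ agrees with the $T^{1,1}_{n,n}\times T_n^2$-equivariant one of $\hat j_*\uk_{\hat U}$, with the bimodule structure restricted along $H^*(BG^{1,1}_{n,n})\subset H^*(BT^{1,1}_{n,n})$. That the individual terms of the two complexes match is Proposition~\ref{pure-constituents} applied to $j_*\uk_U$ and to its cable, and that the differentials correspond follows as before from the indecomposability of $j_*\uk_U$.

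The main obstacle is bookkeeping. One must check that the cabling isomorphism of Lemma~\ref{cable}, stated there globally for the $P\times P$ versus $P\times B$ versus $B\times B$ equivariance of an entire braid, genuinely localizes to the $G_n^1$-block in the presence of the conjugation action of $T_n^2$ — that is, that abelianizing $G_n^1$ does not disturb the $T_n^2$-equivariant structure, which interlaces nontrivially with the $G_n^1$-block — and that the twists $\langle n^2\rangle$ on the two sides line up. As in the proof of Lemma~\ref{cable}, this is handled by a short induction that builds the crossing, and simultaneously its cable, one elementary piece at a time, each step being a direct computation on a Bruhat cell using nothing deeper than the abelianization theorem and the smallness of the resolutions $\pi_i$ of Proposition~\ref{small}.
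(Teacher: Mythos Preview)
Your core idea is the paper's: the big cell $U$ is swept out from the Schubert cell $\hat U$ by the $G_n^1$-directions on both sides, and inducing from the Borel to $G_n^1$ corresponds to restriction of scalars on equivariant cohomology while preserving the chromatographic complex because $G_n/B_n$ is projective. So the approach is essentially the same.

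The paper's execution is more direct than yours. Rather than routing through Lemma~\ref{cable} --- which is stated for the full $P\times P$ versus $P\times B$ equivariance, abelianizes only one side, and carries no extra conjugation action --- the paper simply writes down the two-step induction
\[
\ind_{T_{n,n}^{1,1}\times T_n^2}^{G_{n,n}^{1,1}\times T_n^2}\hat j_*\uk_{\hat U}
\;\cong\;
\ind_{Q\times Q\times T_n^2}^{G_{n,n}^{1,1}\times T_n^2}\ind_{T_{n,n}^{1,1}\times T_n^2}^{Q\times Q\times T_n^2}\hat j_*\uk_{\hat U}
\;\cong\;
\res^{G_{n,n,n}^{1,1,2}}_{G_{n,n}^{1,1}\times T_n^2}j_*\uk_{U},
\]
where $Q=G_n^1\cap B$. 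The first step is harmless since $Q\simeq T_n^1$ and the sheaf is smooth along $Q\times Q$-orbits; the second is realized by the projective map $\mu:G_n\times_Q\overline{\hat U}\times_Q G_n\to G_{2n}$, which restricts to an isomorphism $G_n\times_Q\hat U\times_Q G_n\cong U$. This sidesteps the ``bookkeeping'' you flag: there is nothing to localize and no interaction with $T_n^2$ to disentangle, because the identity is already written for exactly the groups at hand. Your displayed identification is also slightly garbled --- as written, the restricted sheaf and the induced sheaf live in equivariant categories for different groups --- but the intended statement is the one above.

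One genuine correction: drop the last sentence of your second paragraph. Proposition~\ref{pure-constituents} computes the weight filtration of a single $(m,n)$-crossing, not of its cable; the constituents of $\hat j_*\uk_{\hat U}$ are Schubert $\IC$'s on $2n$ strands and are not produced by that proposition, and the indecomposability argument for differentials applies to the length-$(\min(m,n)+1)$ complex of a single crossing, not to the cabled complex. Once you have the sheaf-level isomorphism compatible with weight filtrations (which both you and the paper establish), the chromatographic complexes match automatically and no term-by-term check is needed.
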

\begin{proof}
 Let $Q=G_n^1\cap B$ be the upper-triangular matrices in $G_n$, given
  the natural embedding in $G_{n,n}$.  Then
  \begin{equation*}
\ind_{T_{n,n}^{1,1}\times T_n^2}^{G_{n,n}^{1,1}\times T_n^2}j_*\uk_{\hat U}\cong \ind_{Q\times Q\times T_n^2}^{G_{n,n}^{1,1}\times T_n^2}\ind_{T_{n,n}^{1,1}\times T_n^2}^{Q\times Q\times T_n^2}j_*\uk_{\hat U}\cong \res^{G_{n,n,n}^{1,1,2}}_{T_{n,n}^{1,1}\times T_n^2}j_*\uk_{U}
\end{equation*}
The first induction leaves chromatographic complexes unchanged, which $Q$ and $T^1_n$ are homotopy equivalent, and $j_*\uk_{\hat U}$ is smooth on $Q\times Q$-orbits.

For the second, we have a projective map
\begin{equation*}
  \mu:G_n\times_Q\overline{\hat U}\times_Q G_n\to G_{2n}
\end{equation*}
which induces an isomorphism
\begin{equation*}
  G_n\times_Q{\hat U}\times_Q G_n\cong U.
\end{equation*}
  By \cite[Theorem 5]{WWequ}, under taking equivariant
  cohomology, induction of sheaves corresponds to the restriction of
  scalars, and since $G_n/Q$ is projective this result extends to all terms in the chromatographic spectral sequence.
\end{proof}

Of course, by definition, the $T_{n,n}^{1,1}\times T_n^2$-equivariant
chromatographic complex for $\hat j_*\uk_{\hat U}$ is just the complex
of bimodules for the tangle diagram $D_{cab}$ corresponding to
closing the right half of the strands in the braid above.  Applying
the invariance result for labelings all with 1's, this is the same as
the complex corresponding to a full twist of $n$ strands.

Note that if we consider a negative crossing, we will have to include
$n$ times the usual shift for removing a negative stabilization, but
this is easily accounted for in the normalization.

Of course, restricted to symmetric polynomials (that is, $H^*(BG_n)$),
every Soergel bimodule is a number of copies of the regular bimodule,
and every map in the complex for a single crossing splits, so
restricted to $H^*(BG_n)$, the complex attached to a braid labeled all
with 1's is homotopic to a single copy of $H^*(BT_n)$ with the regular
bimodule action and standard $S_n$-action.  By Lemma \ref{localize}, to obtain the
$G^{1,1,2}_{n,n,n}$-equivariant global chromatographic complex we
simply take $S_n$-invariants and thus we obtain a single copy of the
regular bimodule for $H^*(BG_n)$, as desired.
\end{proof}

\bibliography{./gen}
\bibliographystyle{amsalpha}

\end{document}